\documentclass[12pt]{amsart}
\textwidth=6.5in \oddsidemargin=0in \evensidemargin=0in
\topmargin=0in \textheight=8.5in

\usepackage[all]{xy}

\usepackage{amsmath, amsthm, amssymb,latexsym, graphics}
\usepackage{calc,color,sidecap,caption2}

\newcommand{\N}{\mathbb{N}}
\newcommand{\Z}{\mathbb{Z}}

\newcommand{\R}{\mathbb{R}}
\newcommand{\C}{\mathbb{C}}
\newcommand{\E}{\mathbb{E}}
\newcommand{\X}{\mathcal{X}}

\newcommand{\spr}[2]{\langle #1, #2 \rangle}
\newcommand{\ck}{\check{\phantom{i}}}

\newcommand{\HI}{H^\infty}
\newcommand{\Bes}{\mathcal{B}}
\newcommand{\Mih}{\mathcal{M}}
\newcommand{\Bbii}{\Bes^\beta_{\infty,\infty}}
\newcommand{\Ba}{\Bes^\alpha_{\infty,1}}
\newcommand{\Ma}{\Mih^\alpha}
\DeclareMathOperator{\loc}{loc}
\newcommand{\Baloc}{\Bes^\alpha_{\infty,1,\loc}}

\newcommand{\Fdyad}{\phi}
\newcommand{\dyad}{\varphi}
\newcommand{\equi}{\psi}

\newcommand{\tequi}{\widetilde{\equi}}
\newcommand{\ddyad}{\dot{\dyad}}

\newcommand{\dB}{\dot{B}}
\newcommand{\B}{B}

\DeclareMathOperator{\supp}{supp}
\DeclareMathOperator{\Id}{id}

\DeclareMathOperator{\Str}{Str}
\DeclareMathOperator{\Hol}{Hol}

\let\Re=\relax \DeclareMathOperator{\Re}{Re}
\let\Im=\relax \DeclareMathOperator{\Im}{Im}

\newcommand{\bignorm}[1]{\Bigl\Vert#1\Bigr\Vert}
\newcommand{\Bignorm}[1]{\Bigl\Vert#1\Bigr\Vert}

\newtheorem{thmalt}{Theorem}[section]

\theoremstyle{definition}
\newtheorem{rem}[thmalt]{Remark}
\newtheorem{defi}[thmalt]{Definition}
\newtheorem{thm}[thmalt]{Theorem}
\newtheorem{cor}[thmalt]{Corollary}
\newtheorem{lem}[thmalt]{Lemma}
\newtheorem{prop}[thmalt]{Proposition}
\newtheorem{nota}[thmalt]{Notation}

\numberwithin{equation}{section}

\title[Paley-Littlewood Decomposition for Sectorial Operators]% end with percent
 {Paley-Littlewood Decomposition for Sectorial Operators and Interpolation Spaces} % This is the full title of the paper

\author[Ch. Kriegler]{Christoph Kriegler}
\address{Christoph Kriegler\\
Laboratoire de Math\'ematiques (CNRS UMR 6620)\\
Universit\'e Blaise-Pascal (Clermont-Ferrand 2)\\
Campus des C\'ezeaux\\
3, place Vasarely\\
TSA 60026\\
CS 60026\\
63 178 Aubi\`ere Cedex\\
France}

\email{christoph.kriegler@math.univ-bpclermont.fr}
\thanks{The first named author acknowledges financial support from the Franco-German University (DFH-UFA) and the Karlsruhe House of Young Scientists (KHYS).
The second named author acknowledges the support by the DFG through CRC 1173.}

\author[L. Weis]{Lutz Weis}
\address{Lutz Weis\\
Karlsruher Institut f\"ur Technologie\\
Fakult\"at f\"ur Mathematik\\
Institut f\"ur Analysis\\
Englerstra\ss{}e 2\\
76131 Karlsruhe\\
Germany}
\email{lutz.weis@kit.edu}

%Insert `2000 Mathematics Subject Classification' numbers here:
\date{\today}
\subjclass[2010]{42B25, 47A60, 47B40, 42A45, 46J15}
\keywords{Littlewood Paley Theory, Functional calculus, Complex and Real Interpolation Spaces, H\"ormander Type Spectral Multiplier Theorems}

\allowdisplaybreaks

\begin{document}

\begin{abstract}
We prove Paley-Littlewood decompositions for the scales of fractional powers of $0$-sectorial operators $A$ on a Banach space which correspond to Triebel-Lizorkin spaces and the scale of Besov spaces if $A$ is the classical Laplace operator on $L^p(\R^n).$
We use the $\HI$-calculus, spectral multiplier theorems and generalized square functions on Banach spaces and apply our results to Laplace-type operators on manifolds and graphs, Schr\"odinger operators and Hermite expansion.
We also give variants of these results for bisectorial operators and for generators of groups with a bounded $\HI$-calculus on strips.\end{abstract}

\maketitle

\section{Introduction}\label{Sec 1 Intro}

Littlewood-Paley decompositions do not only play an important role in the theory of the classical Besov and Triebel-Lizorkin spaces but also in the study of scales of function spaces associated with Laplace-Beltrami operators on manifolds, Laplace-type operators on graphs and fractals, Schr\"odinger operators and operators associated with various orthogonal expansions (Hermite, Laguerre, etc).
They are an important tool in the more detailed analysis of these function spaces (e.g. wavelet and ``molecular'' decompositions) but also in the study of partial differential equations (see e.g. \cite{BCD,IP,T,BGT}).

The most common approach in the literature is to start from a self-adjoint operator on an $L^2(U)$-space and then use extrapolation techniques (e.g. transference principles, Gaussian bounds, off-diagonal estimates) and interpolation to obtain Paley-Littlewood decompositions on the $L^p(U)$-scale.
In this paper we offer a more general approach which does not assume that $A$ is defined on an $L^p$-scale nor that $A$ is self-adjoint in some sense.
We consider $0$-sectorial operators $A$ on a general Banach space and construct Paley-Littlewood decompositions for the scale of fractional domains of $A$ (also of negative order) using as our main tool the $\HI$ functional calculus and spectral multiplier theorems connected with it.
Since it is well-known that the classical operators mentioned above do have a bounded $\HI$-calculus we offer a unified approach which covers the known results for various classes of operators.

Let us describe a typical result for a $0$-sectorial operator $A$ on a Banach space $X.$
We assume that $A$ has a bounded $\HI(\Sigma_\omega)$ calculus on sectors $\Sigma_\omega$ of angle $\omega$ around $\R_+$ for all $\omega > 0,$ i.e. there are constants $C < \infty,\: \alpha > 0$ such that $\|f(A)\| \leq \frac{C}{\omega^\alpha} \sup_{\lambda \in \Sigma_\omega} |f(\lambda)|$ for all bounded analytic functions $f \in \HI(\Sigma_\omega)$ and $\omega > 0.$
This corresponds to the boundedness of spectral multipliers $f(A)$ on $X,$ where $f$ satisfies the Mihlin condition
\begin{equation}\label{Equ Intro Mihlin condition} \sup_{t > 0} |t^k f^{(k)}(t)| < \infty \quad (k = 0,1,\ldots,N).
\end{equation}
Let $\ddyad_n$ be a sequence in $C^\infty_c(\R_+)$ satisfying $\supp \ddyad_0 \subset [\frac12,2],\: \ddyad_n = \ddyad_0(2^{-n}(\cdot)),\: \sum_{n \in \Z} \ddyad_n(t) = 1$ and $(\epsilon_n)_{n \in \Z}$ an independent sequence of Bernoulli random variables (or Rademacher functions).
Then for all $\theta \in \R$ and $x \in D(A^\theta)$ 
\begin{equation}\label{Equ Intro fractional} \| A^\theta x \| \cong \E_\omega \| \sum_{n \in \Z} \epsilon_n(\omega) 2^{n \theta} \ddyad_n(A) x \|
\end{equation}
where $\ddyad_n(A)$ is defined as a ``spectral multiplier''.
(See Section \ref{Sec 2 Prelims} for the necessary background in spectral theory.)
If $X$ is an $L^p(U)$ space then by Kahane's inequality, the random sum in \eqref{Equ Intro fractional} reduces to the classical square sum 
\[\|A^\theta x \| \cong \| (\sum_{n \in \Z} |2^{n \theta} \ddyad_n(A) x|^2 )^{\frac12} \|_{L^p(U)}\] 
Therefore the random sums in \eqref{Equ Intro fractional} can be regarded as the natural extension of $L^p$-square sums in \eqref{Equ Intro fractional} to the general Banach space setting (see Subsection \ref{Subsec Prelims Rad}).
The random sums are a useful and necessary tool in the context of Bochner spaces, Sobolev spaces of Banach space valued functions and mixed norm spaces.
For $p > 2,$ this implies by Minkowski's inequality the estimate
\[ \| A^\theta x \| \leq C \left( \sum_{n \in \Z} \| 2^{n\theta} \ddyad_n(A) x \|^2 \right)^{\frac12} \]
which has proven to be very useful in the theory of dissipative evolution equations (see e.g. \cite{IP} and the literature quoted there).
In Section \ref{Sec 3 Spectral Decomposition} we prove as one of our main results statement \eqref{Equ Intro fractional} and a companion result concerning the inhomogeneous Paley-Littlewood decomposition.
Furthermore we give ``continuous'' versions based on generalized square functions.
For $X = L^p(U),$ they read as
\[ \| A^\theta x \| \cong \left\| \left( \int_0^\infty | t^{-\theta} \psi(tA) x |^2 \frac{dt}{t} \right)^{\frac12} \right\|_{L^p(U)}. \]
We also consider decomposing functions $\ddyad_n$ with less regularity than $C^\infty.$

These results are proven using a ``Mihlin'' functional calculus for functions satisfying \eqref{Equ Intro Mihlin condition}.
However, in a sense the Paley-Littlewood decomposition \eqref{Equ Intro fractional} is equivalent to the boundedness of a Mihlin functional calculus:
Assuming a weaker functional calculus and the Paley-Littlewood decomposition \eqref{Equ Intro fractional}, we show the boundedness of a Mihlin functional calculus (see Proposition \ref{Prop weak to strong calculus}).

Whereas the results in Section \ref{Sec 3 Spectral Decomposition} are modelled after the Triebel-Lizorkin spaces, we consider in Section \ref{Sec 5 Real Interpolation} analogues of Besov spaces which are based on the real interpolation method, e.g. $\dB^\theta_q = (X,D(A))_{\theta,q}.$
We show in Theorems \ref{Thm Besov identification} and \ref{Thm Besov identification continuous}
\[ \|x\|_{\dB^\theta_q} \cong \left( \sum_{n \in \Z} 2^{n\theta q} \|\ddyad_n(A)x\|_X^q \right)^{\frac1q}, \quad \|x\|_{\dB^\theta_q} \cong \left( \int_0^\infty t^{-\theta q} \| \ddyad_0(tA)x\|^q \frac{dt}{t} \right)^{\frac1q} .\]
These results can be obtained under much weaker assumptions and do not require a bounded $\HI$ calculus for $A.$
In particular we show that a weak functional calculus for $A$ on $X$ suffices to obtain automatically a Mihlin functional calculus on these Besov type spaces.

In Section \ref{Sec 6 Examples} we apply our results to various classes of operators obtaining new Paley-Littlewood decompositions but also recovering many results known in the literature in a unified way.

In Section \ref{Sec 6 Bisectorial Operators} we indicate how to extend our results to bisectorial operators.

If the sectorial operator $A$ has bounded imaginary powers then the results of Sections \ref{Sec 3 Spectral Decomposition} and \ref{Sec 5 Real Interpolation} translate into decompositions for the group $U(t) = A^{it}$ with generator $B = \log(A).$
In Section \ref{Sec 6 Strip-type Operators} we sketch the corresponding decompositions for $B$,
under the assumption that $B$ is a strip-type operator with a bounded $\HI$ calculus on each strip $\Str_\omega$ around $\R$ for $\omega > 0.$

\section{Preliminaries}\label{Sec 2 Prelims}

\subsection{$0$-sectorial operators}\label{Subsec A B}

We briefly recall standard notions on $\HI$ calculus.
For $\omega \in (0,\pi)$ we let $\Sigma_\omega = \{ z \in \C \backslash \{ 0 \} :\: | \arg z | < \omega \}$ the sector around the positive axis of aperture angle $2 \omega.$
We further define $\HI(\Sigma_\omega)$ to be the space of bounded holomorphic functions on $\Sigma_\omega.$
This space is a Banach algebra when equipped with the norm $\|f\|_{\infty,\omega} = \sup_{\lambda \in \Sigma_\omega} |f(\lambda)|.$

A closed operator $A : D(A) \subset X \to X$ is called $\omega$-sectorial, if the spectrum $\sigma(A)$ is contained in $\overline{\Sigma_\omega},$ $R(A)$ is dense in $X$ and
\begin{equation}\label{Equ Def Sectorial}
\text{for all }\theta > \omega\text{ there is a }C_\theta > 0\text{ such that }\|\lambda (\lambda - A)^{-1}\| \leq C_\theta \text{ for all }\lambda \in \overline{\Sigma_\theta}^c.
\end{equation}
Here, $\overline{\Sigma_\theta}^c = \C \backslash \overline{\Sigma_\theta}$ is the set complement.
Note that $\overline{R(A)} = X$ along with \eqref{Equ Def Sectorial} implies that $A$ is injective.
In the literature, the condition $\overline{R(A)} = X$ is sometimes omitted in the definition of sectoriality.
Note that if $A$ satisfies the conditions defining $\omega$-sectoriality except $\overline{R(A)} = X$ on $X = L^p(\Omega),\, 1 < p < \infty$ (or any reflexive space),
then there is a canonical decomposition 
\begin{equation}\label{Equ sectorial injective}
X  = \overline{R(A)} \oplus N(A),\:x = x_1 \oplus x_2,\text{ and }A = A_1 \oplus 0,\,x \mapsto A x_1 \oplus 0,
\end{equation}
such that $A_1$ is $\omega$-sectorial on the space $\overline{R(A)}$ with domain $D(A_1) = \overline{R(A)} \cap D(A).$

For $\theta \in (0,\pi),$ we let
$\HI_0(\Sigma_\theta) = \{ f \in \HI(\Sigma_\theta):\: \exists\: C,\epsilon > 0:\: |f(\lambda)| \leq C (1+|\log \lambda|)^{-1-\epsilon} \}.$
Note that in the literature, this space is usually defined slightly differently, imposing the decay $|f(\lambda)| \leq C |\lambda|^\epsilon / | 1 + \lambda |^{2\epsilon}.$
Our space is larger and the reason for the different choice is of minor technical nature.
Then for an $\omega$-sectorial operator $A$ and a function $f \in \HI_0(\Sigma_\theta)$ for some $\theta \in (\omega,\pi),$ one defines the operator
\begin{equation}\label{Equ Cauchy Integral Formula}
f(A) = \frac{1}{2 \pi i} \int_{\Gamma} f(\lambda) (\lambda - A)^{-1} d\lambda ,
\end{equation}
where $\Gamma$ is the boundary of a sector $\Sigma_\sigma$ with $\sigma \in (\omega,\theta),$ oriented counterclockwise.
By the estimate of $f,$ the integral converges in norm and defines a bounded operator.
If moreover there is an estimate $\|f(A)\| \leq C \|f\|_{\infty,\theta}$ with $C$ uniform over all such functions, then $A$ is said to have a bounded $\HI(\Sigma_\theta)$ calculus.
In this case, there exists a bounded homomorphism $\HI(\Sigma_\theta) \to B(X),\,f \mapsto f(A)$ extending the Cauchy integral formula \eqref{Equ Cauchy Integral Formula}.

We refer to \cite{CDMY} for details.
We call $A$ $0$-sectorial if $A$ is $\omega$-sectorial for all $\omega > 0.$
For $\omega \in (0,\pi),$ define the algebra of functions $\Hol(\Sigma_\omega) = \{ f : \Sigma_\omega \to \C :\: \exists \: n \in \N :\: \rho^n f \in \HI(\Sigma_\omega) \},$
where $\rho(\lambda) = \lambda (1 + \lambda)^{-2}.$ 
For a proof of the following lemma, we refer to \cite[Section 15B]{KuWe} and \cite[p.~91-96]{Haasa}.

\begin{lem}\label{Lem Hol}
Let $A$ be a $0$-sectorial operator.
There exists a linear mapping, called the extended holomorphic calculus,
\begin{equation}\label{Equ Extended HI calculus}
\bigcup_{\omega > 0} \Hol(\Sigma_\omega) \to \{ \text{closed and densely defined operators on }X \},\: f \mapsto f(A)
\end{equation}
extending \eqref{Equ Cauchy Integral Formula} such that for any $f,g \in \Hol(\Sigma_\omega),$ $f(A)g(A)x = (f g)(A)x$ for $x \in \{y \in D(g(A)):\: g(A)y \in D(f(A)) \} \subset D((fg)(A))$ and
$D(f(A)) = \{ x \in X :\: (\rho^n f)(A) x \in D(\rho(A)^{-n}) = D(A^n) \cap R(A^n) \},$ where $(\rho^n f)(A)$ is given by \eqref{Equ Cauchy Integral Formula}, i.e. $n \in \N$ is sufficiently large.
\end{lem}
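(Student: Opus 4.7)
The approach is regularization via the function $\rho(\lambda) = \lambda(1+\lambda)^{-2}$. Since $\rho \in \HI_0(\Sigma_\omega)$ for every $\omega \in (0,\pi)$, the operator $\rho(A)$ is well-defined and bounded via \eqref{Equ Cauchy Integral Formula}, and it is injective --- a consequence of $A$ being injective (which follows from $\omega$-sectoriality together with $\overline{R(A)} = X$) and of $(1+A)^{-2}$ being injective. Hence $\rho(A)^n$ is a bounded injective operator for each $n \in \N$, and a resolvent computation yields $D(\rho(A)^{-n}) = D(A^n) \cap R(A^n)$. For $f \in \Hol(\Sigma_\omega)$, the extra decay of $\rho$ at $0$ and $\infty$ ensures that for $n$ sufficiently large $\rho^n f \in \HI_0(\Sigma_\omega)$, so $(\rho^n f)(A)$ is defined by \eqref{Equ Cauchy Integral Formula}. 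The plan is to set
\[ f(A) := \rho(A)^{-n} \circ (\rho^n f)(A), \quad D(f(A)) := \{ x \in X : (\rho^n f)(A) x \in D(\rho(A)^{-n}) \}. \]

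The first task is to show independence of $n$: for $m \geq n$ one also has $\rho^m f \in \HI_0(\Sigma_\omega)$, and multiplicativity of the $\HI_0$-calculus gives $(\rho^m f)(A) = \rho(A)^{m-n} (\rho^n f)(A)$; composing on the left with $\rho(A)^{-m}$ shows the two candidates for $f(A)$ coincide as closed operators with identical natural domain. Closedness of $f(A)$ is then immediate, as $\rho(A)^{-n}$ is closed and $(\rho^n f)(A)$ is bounded. For density of $D(f(A))$ one reduces to density of $D(A^N) \cap R(A^N)$ in $X$ for arbitrary $N$, which follows from the approximate identities $sA(1+sA)^{-1}$ and $(1+sA)^{-1}$ applied iteratively, using both $\overline{D(A)} = X$ and $\overline{R(A)} = X$.

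The principal technical point --- and the step I expect to be the main obstacle --- is the multiplicative identity $f(A) g(A) x = (fg)(A) x$ together with the domain inclusion $\{ y \in D(g(A)) : g(A) y \in D(f(A)) \} \subset D((fg)(A))$. Given $f, g \in \Hol(\Sigma_\omega)$, choose $n$ large enough that $\rho^n f$, $\rho^n g$, and $\rho^{2n} fg$ all lie in $\HI_0(\Sigma_\omega)$; multiplicativity inside $\HI_0$ then supplies $(\rho^n f)(A)(\rho^n g)(A) = (\rho^{2n} fg)(A)$ as bounded operators on $X$. For $x$ in the stated domain, the argument consists in unfolding the two-stage regularization on each side and verifying that the intermediate vectors $(\rho^n g)(A) x$ and $(\rho^n f)(A) g(A) x$ lie in the ranges required to apply $\rho(A)^{-n}$ and $\rho(A)^{-2n}$ consistently; this domain bookkeeping is what delivers the inclusion into $D((fg)(A))$. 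Finally, the characterization of $D(f(A))$ stated in the lemma is a direct translation of the definition using $D(\rho(A)^{-n}) = D(A^n) \cap R(A^n)$. The detailed execution of these steps is carried out in \cite[Section 15B]{KuWe} and \cite[pp.~91--96]{Haasa}.
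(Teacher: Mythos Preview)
Your proposal is correct and follows exactly the standard regularization approach via $\rho(\lambda) = \lambda(1+\lambda)^{-2}$ that the paper has in mind; indeed, the paper does not give its own proof but simply refers to \cite[Section 15B]{KuWe} and \cite[pp.~91--96]{Haasa}, which is precisely what you sketch and cite at the end. There is nothing to add.
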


\subsection{Function spaces on the line and half-line}\label{Subsec Prelims Function spaces}

In this subsection, we introduce several spaces of differentiable functions on $\R_+ = (0,\infty)$ and $\R.$
Different partitions of unity play a key role.

\begin{defi}\label{Def Partitions of unity}~
\begin{enumerate}
\item
If $\ddyad \in C^\infty_c(0,\infty)$ with $\supp \ddyad \subset [\frac12, 2]$ and $\sum_{n=-\infty}^\infty \ddyad(2^{-n} t) = 1$ for all $t > 0,$
we put $\ddyad_n = \ddyad(2^{-n} \cdot)$ and call $(\ddyad_n)_{n \in \Z}$ a (homogeneous) dyadic partition of unity on $\R_+$.
\item
If $(\ddyad_n)_{n \in \Z}$ is a homogeneous dyadic partition of unity on $\R_+$, we put $\dyad_n = \ddyad_n$ for $n \geq 1$ and $\dyad_0 = \sum_{k = -\infty}^0 \ddyad_k,$ so that $\supp \dyad_0 \subset (0,2].$
Then we call $(\dyad_n)_{n \in \N_0}$ an inhomogeneous dyadic partition of unity on $\R_+.$
\item Let $\Fdyad_0,\,\Fdyad_1 \in C^\infty_c(\R)$ such that $\supp \Fdyad_1 \subset [\frac12,2]$ and $\supp \Fdyad_0 \subset [-1,1].$
For $n \geq 2,$ put $\Fdyad_n = \Fdyad_1(2^{1-n}\cdot),$ so that $\supp \Fdyad_n \subset [2^{n-2},2^n].$
For $n \leq -1,$ put $\Fdyad_n = \Fdyad_{-n}(-\cdot).$
We assume that $\sum_{n \in \Z} \Fdyad_n(t) = 1$ for all $t \in \R.$
Then we call $(\Fdyad_n)_{n \in \Z}$ a dyadic partition of unity on $\R$, which we will exclusively use to the decompose the Fourier image of a function.
\end{enumerate}

For the existence of such smooth partitions, we refer to the idea in \cite[Lemma 6.1.7]{BeL}.
In the later use of the above definitions (1) and (2) we could relax the condition of the functions belonging to $C^\infty_c(\R_+),$ to functions that are smooth up to an order $> \alpha,$ where $\alpha$ always denotes the derivation order of the functional calculus of $A.$
Whenever $(\phi_n)_n$ is a partition of unity as in (1) or (3), we put
\begin{equation}\label{Equ tilde partition}
\widetilde\phi_n = \sum_{k=-1}^1 \phi_{n+k}.
\end{equation}
It will be very often useful to note that
\begin{equation}\label{Equ Partition of unity}
\widetilde\phi_m \phi_n = \phi_n\text{ for }m=n\text{ and }\widetilde\phi_m \phi_n = 0\text{ for }|n-m| \geq 2.
\end{equation}
\end{defi}

We recall the following classical function spaces:
\begin{nota}\label{Def Classical function spaces}
Let $m \in \N_0$ and $\alpha > 0.$
\begin{enumerate}
\item $C^m_b = \{ f : \R \to \C:\: f \,m\text{-times differentiable and }f,f',\ldots,f^{(m)}$ 

$\text{ uniformly continuous and bounded}\}.$
\item $\Bes^\alpha_{p,q},$ the Besov spaces defined for example in \cite[p. 45]{Triea}:
Let $(\Fdyad_n)_{n\in\Z}$ be a dyadic partition of unity on $\R.$
Then
\[ \Bes^\alpha_{p,q} = \{ f \in C_b^0 : \: \|f\|_{B^\alpha_{p,q}}^q = \sum_{n \in \Z} 2^{|n|\alpha q} \|f \ast \check{\Fdyad_n}\|_p^q < \infty\}. \]
\end{enumerate}
The spaces in (2) are Banach algebras for any $\alpha > \frac1p$ \cite[p.~222]{RuSi}.

Further we also consider the local space
\begin{enumerate}
\item[(3)] $\Baloc = \{ f : \R \to \C :\: f\varphi \in \Ba\text{ for all }\varphi \in C^\infty_c\}$ for $\alpha > 0.$
\end{enumerate}
This space is closed under pointwise multiplication.
Indeed, if $\varphi \in C^\infty_c$ is given, choose $\psi \in C^\infty_c$ such that $\psi \varphi = \varphi.$
For $f,g \in \Baloc,$ we have $(fg)\varphi = (f \varphi)(g \psi) \in \Baloc.$
\end{nota}

\subsection{Fractional powers of sectorial operators}
We give a short overview on fractional powers.
Let $A$ be a $0$-sectorial operator and $\theta \in \C.$
Since $\lambda \mapsto \lambda^\theta \in \Hol(\Sigma_\omega)$ for $0 < \omega < \pi,$ $A^\theta$ is defined by the extended holomorphic calculus from \eqref{Equ Extended HI calculus}.
(If $A$ has an $\Ma_1$ calculus defined below, then $A^\theta$ is also given by the $\Ma_{\loc}$ calculus from Proposition \ref{Prop Soaloc calculus}.)
To $A^\theta,$ one associates the following two scales of extrapolation spaces (\cite[Definition 15.21, Lemma 15.22]{KuWe}, see also \cite[Section 2]{KaKW} and \cite[II.5]{EnNa})
\begin{align*}
\dot{X}_\theta & = (D(A^\theta),\|A^\theta \cdot\|_X)\,\widetilde{ }\,& (\theta \in \C),\\
\text{and }X_\theta & = (D(A^\theta),\|A^\theta \cdot\|_X + \|\cdot\|_X)\,\widetilde{ }\, & (\Re \theta \geq 0).
\end{align*}
Here, $\widetilde{  }\,$ denotes completion with respect to the indicated norm.
Clearly, $X = \dot{X}_0 = X_0.$
If $A = -\Delta$ is the Laplace operator on $L^p(\R^d),$ then $\dot{X}_\theta$ is the Riesz or homogeneous potential space, whereas $X_\theta$ is the Bessel or inhomogeneous potential space.
For two different values of $\theta,$ the completions can be realized in a common space.
More precisely, if $m \in \N,\,m \geq \max(|\theta_0|,|\theta_1|),$ then $\dot{X}_{\theta_j}$ and $X_{\theta_j}$ can be viewed as subspaces of
\begin{equation}\label{Equ The space Ym}
(X,\|(A(1+A)^{-2})^m \cdot\|_X)\,\widetilde{ }\,
\end{equation}
for $j=0,1.$
Then for $\theta > 0,$ one has $X_\theta = \dot{X}_\theta \cap X$ with equivalent norms \cite[Propositions 15.25 and 15.26]{KuWe}.
Thus, $\{\dot{X}_{\theta_0},\dot{X}_{\theta_1}\}$ and $\{X_{\theta_0},X_{\theta_1}\}$ form an interpolation couple.
It is known that if $A$ has bounded imaginary powers, then we have for the complex interpolation method
\begin{equation}\label{Equ Complex Interpolation BIP}
[\dot{X}_{\theta_0},\dot{X}_{\theta_1}]_r = \dot{X}_{(1-r) \theta_0 + r \theta_1}
\end{equation}
for any $\theta_0,\theta_1 \in \R$ and $r \in (0,1),$ see \cite{Trie} and \cite[Proposition 2.2]{KaKW}.
The connection between the complex interpolation scale $\dot{X}_{\theta}$ and the $\HI$ calculus has been studied e.g. in \cite{KaKW, WeSurvey}.

\subsection{Generalized Square Functions}\label{Subsec Prelims Rad}

A classical theorem of Marcinkiewicz and Zygmund states that for elements $x_1,\ldots,x_n \in L^p(U,\mu)$ we can express ``square sums'' in terms of random sums
\begin{equation}\label{Equ PL equivalence} \left\| \left( \sum_{j=1}^n |x_j(\cdot)|^2 \right)^{\frac12} \right\|_{L^p(U)}
\cong \left( \E \| \sum_{j=1}^n \epsilon_j x_j \|_{L^p(u)}^q \right)^{\frac1q}
\cong \left( \E \| \sum_{j=1}^n \gamma_j x_j \|_{L^p(u)}^q \right)^{\frac1q}\end{equation}
with constants only depending on $p,q \in [1,\infty).$
Here $(\epsilon_j)_j$ is a sequence of independent Bernoulli random variables (with $P(\epsilon_j = 1) = P(\epsilon_j = -1) =\frac12$) and $(\gamma_j)_j$ is a sequence of independent standard Gaussian random variables.
Following \cite{Bou} it has become standard by now to replace square functions in the theory of Banach space valued function spaces by such random sums (see e.g. \cite{KuWe}).
Note however that Bernoulli sums and Gaussian sums for $x_1,\ldots,x_n$ in a Banach space $X$ are only equivalent if $X$ has finite cotype (see \cite[p.~218]{DiJT} for details).

A Banach space version of continuous square functions $\| \left( \int_I |f(t)|^2 dt\right)^{\frac12} \|_{L^p(U)},\: I \subset \R$ for functions $f : I \to X$ was developped in \cite{KaW2}.
Assume that $x' \circ f \in L^2(I,dt)$ for all $x' \in X'$ and define an operator $u_f : L^2(I) \to X$ by
\[ \spr{u_f h}{x'} = \int_I \spr{f(t)}{x'} h(t) dt,\: h \in L^2(I). \]
Denote by $\|f\|_{\gamma(I,X)}$ the $\gamma$-radonifying norm of the operator $u_f:$
\[ \| u_f \|_{\gamma(L^2(I),X)} := \left( \E \| \sum_k \gamma_k u_f(e_k) \|_X^2 \right)^{\frac12} \]
where $(e_k)$ is an orthonormal basis of $L^2(I).$
We use Gaussian variables here, so that the norm $\|u_f\|$ is independent of the specific choice of the orthonormal basis $(e_k).$
Note that for $f : I \to L^p(U,\mu)$ we have
\begin{align*} \| \left( \int_I |f(t)|^2 dt \right)^{\frac12} \|_{L^p(U)} & = \| \left( \sum_k | \int_I f(t) e_k(t) dt |^2 \right)^{\frac12} \|_{L^p(U)} \cong \left( \E \left\| \sum_k \gamma_k \int_I f(t) e_k(t) dt \right\|^2 \right)^{\frac12} \\
&  = \|u_f\|_\gamma = \|f\|_{\gamma(I,L^p(U))} 
\end{align*}
as intended.
The completion of the space of functions $f : I \to X$ with $\|f\|_{\gamma(I,X)} < \infty$ is the space of radonifying operators $\gamma(L^2(I),X).$
For this fact, the following properties and further extensions see \cite{DiJT} and \cite{vN}.

\begin{lem}\label{Lem Technical gamma spaces}
Let $(\Omega_k,\mu_k)$ be $\sigma$-finite measure spaces $(k = 1,2).$
\begin{enumerate}
\item For $f \in \gamma(\Omega_1,X)$ and $g \in \gamma(\Omega_1,X'),$ we have
\[ \int_{\Omega_1} \left| \spr{f(t)}{g(t)} \right| dt \leq \|f\|_{\gamma(\Omega_1,X)} \|g\|_{\gamma(\Omega_1,X')}. \]
\item Let $g : \: \Omega_1 \otimes \Omega_2 \to X$ be weakly measurable and assume that for any $x' \in X',$ we have
$\int_{\Omega_1} \left( \int_{\Omega_2} \left| \spr{g(t,s)}{x'} \right| ds \right)^2 dt < \infty.$
Then
$\int_{\Omega_2} g(\cdot,s) ds \in \gamma(\Omega_1,X)$ and $\|\int_{\Omega_2} g(\cdot,s) ds \| \leq \int_{\Omega_2} \|g(\cdot,s)\|_{\gamma(\Omega_1,X)} ds$
hold as soon as the right most expression is finite.
\end{enumerate}
\end{lem}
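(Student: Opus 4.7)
Both parts are instances of the Gaussian characterization of the $\gamma$-norm: part (1) is trace duality between $\gamma(\Omega_1,X)$ and $\gamma(\Omega_1,X')$, while part (2) is a vector-valued Minkowski inequality in disguise.

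For (1), I would first treat finite-rank functions $f$ and $g$ and then pass to the limit. Let $(e_k)$ be an orthonormal basis of $L^2(\Omega_1)$ and write $f(t) = \sum_{k=1}^N e_k(t) x_k$, $g(t) = \sum_{k=1}^N e_k(t) x_k'$ with $x_k \in X$ and $x_k' \in X'$. Orthonormality gives
\[ \int_{\Omega_1} \spr{f(t)}{g(t)}\, dt = \sum_{k=1}^N \spr{x_k}{x_k'}, \]
and because $\E[\gamma_j \gamma_k] = \delta_{jk}$ the sum equals $\E\spr{\sum_j \gamma_j x_j}{\sum_k \gamma_k x_k'}$. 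Cauchy--Schwarz under the expectation therefore bounds it by $\|f\|_{\gamma(\Omega_1,X)}\|g\|_{\gamma(\Omega_1,X')}$. To replace the signed integral by the integral of $|\spr{f(t)}{g(t)}|$, substitute $f$ with $\tilde f(t) = \varepsilon(t) f(t)$, where $\varepsilon(t)$ is the unimodular sign of $\spr{f(t)}{g(t)}$; since $u_{\tilde f} = u_f \circ M_\varepsilon$ and $M_\varepsilon$ is a surjective isometry of $L^2(\Omega_1)$, the $\gamma$-norm is preserved. The general case then follows by approximating $f$ and $g$ in the respective $\gamma$-norms by finite-rank functions, as $\gamma(\Omega_1, X)$ is by definition the completion of such.

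For (2), I would first define $G(t) = \int_{\Omega_2} g(t,s)\, ds$ weakly: the stated hypothesis ensures that $\spr{G(t)}{x'}$ is meaningful for every $x' \in X'$ and that a candidate operator $u_G : L^2(\Omega_1) \to X$ is determined through $\spr{u_G h}{x'} = \int_{\Omega_1} \spr{G(t)}{x'} h(t)\, dt$. Expanding in an orthonormal basis $(e_k)$ and applying Fubini, one obtains $u_G(e_k) = \int_{\Omega_2} u_{g(\cdot, s)}(e_k)\, ds$; pulling the Gaussian sum inside the $s$-integral and using Minkowski's inequality in the Gaussian $L^2$-norm then gives
\[ \|G\|_{\gamma(\Omega_1, X)} = \Bigl( \E \Bigl\| \int_{\Omega_2} \sum_k \gamma_k u_{g(\cdot, s)}(e_k)\, ds \Bigr\|^2 \Bigr)^{1/2} \leq \int_{\Omega_2} \|g(\cdot,s)\|_{\gamma(\Omega_1,X)}\, ds. \]

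The main technical obstacle sits in part (2): under only the Pettis-type integrability assumed, one has to justify that the operator-valued map $s \mapsto u_{g(\cdot, s)} \in \gamma(\Omega_1, X)$ is genuinely integrable and that each interchange of the $s$-integral with the Gaussian sum and with the $t$-integral is legitimate. I would handle this by a truncation and approximation argument in the spirit of van Neerven's $\gamma$-integration theory, using part (1) to control the $\gamma(\Omega_1, X)$-norm through duality against $\gamma(\Omega_1, X')$.
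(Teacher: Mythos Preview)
The paper does not actually prove this lemma: immediately before the statement it says ``For this fact, the following properties and further extensions see \cite{DiJT} and \cite{vN}'', so Lemma~\ref{Lem Technical gamma spaces} is quoted from the literature rather than established here. Your sketch is in line with the standard arguments one finds in those references (trace duality for (1), a $\gamma$-valued Minkowski inequality for (2)), so there is nothing to compare on the level of strategy.

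One small caveat on your argument for (1): when you replace $f$ by $\tilde f(t)=\varepsilon(t)f(t)$ with $\varepsilon$ a measurable unimodular function, $\tilde f$ need not be a finite sum $\sum e_k x_k$ in the same basis, so the identity $\int\spr{\tilde f}{g}=\sum_k\spr{\tilde x_k}{x_k'}$ is not immediate from your first computation. It is cleaner to prove the signed inequality $\bigl|\int_{\Omega_1}\spr{f}{g}\bigr|\le\|f\|_\gamma\|g\|_\gamma$ for arbitrary $f,g$ first (which your finite-rank argument plus approximation does), and only \emph{then} apply the unimodular multiplier trick, using that $\|\tilde f\|_\gamma=\|f\|_\gamma$ for general $f$. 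This is a minor reordering, not a gap.
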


Let $\tau$ be a subset of $B(X).$
We say that $\tau$ is $R$-bounded if there exists a $C < \infty$ such that
\[ \E \bignorm{ \sum_{k=1}^n \epsilon_k T_k x_k } \leq C \E \bignorm{ \sum_{k=1}^n \epsilon_k x_k } \]
for any $n \in \N,$ $T_1,\ldots, T_n \in \tau$ and $x_1,\ldots,x_n \in X.$
The smallest admissible constant $C$ is denoted by $R(\tau).$
We remark that one always has $R(\tau) \geq \sup_{T \in \tau} \|T\|$ and equality holds if $X$ is a Hilbert space.

\section{The Mihlin functional calculus}

We will use spectral multiplier theorems for the following Mihlin classes of functions for $\alpha > 0.$
\[\Ma = \{ f : \R_+ \to \C:\: f_e \in \Ba\},\]
equipped with the norm $\|f\|_{\Ma} = \|f_e\|_{\Ba}.$ 
Here and later we write
\[f_e : J \to \C,\,z \mapsto f(e^z)\]
for a function $f : I \to \C$ such that $I \subset \C \backslash (-\infty,0]$ and $J = \{ z \in \C : \: | \Im z | < \pi,\:e^z \in I \}$ and similarly $f_2 = f(2^{(\cdot)}).$
The space $\Ma$ coincides with the space $\Lambda^\alpha_{\infty, 1}(\R_+)$ in \cite[p. 73]{CDMY}.
The name ``Mihlin class'' is justified by the following facts.
The Mihlin condition for a $\beta$-times differentiable function $f: \R_+ \to \C$ is
\begin{equation}\label{Equ Classical Mihlin condition}
\sup_{t > 0, k = 0,\ldots,\beta} |t^k f^{(k)}(t)| < \infty
\end{equation}
\cite[(1)]{Duon}.
If $f$ satisfies \eqref{Equ Classical Mihlin condition}, then $f \in \Ma$ for $\alpha < \beta$ \cite[p. 73]{CDMY}.
Conversely, if $f \in \Ma,$ then $f$ satisfies \eqref{Equ Classical Mihlin condition} for $\alpha \geq \beta.$
The proof of this can be found in \cite[Theorem 3.1]{GaMi}, where also the case $\beta \not\in \N$ is considered.

We have the following elementary properties of Mihlin spaces.
Its proof may be found in \cite[Propositions 4.8 and 4.9]{Kr}.

\begin{prop}\label{Prop Elementary Mih Hor}~
\begin{enumerate}
\item The space $\Ma$ is a Banach algebra.
%\item Different partitions of unity $(\equi_n)_n$ give the same space $\Ha$ with equivalent norms.
\item Let $m,n \in \N_0$ and $\alpha,\beta > 0$ such that $m > \beta > \alpha > n.$
Then
\[C^m_b \hookrightarrow \Bes^\beta_{\infty,\infty} \hookrightarrow \Ba \hookrightarrow \Bes^\alpha_{\infty,\infty} \hookrightarrow C^n_b.\]
\end{enumerate}
\end{prop}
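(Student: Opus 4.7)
For (1), I would proceed via the isometric bijection $f \mapsto f_e$, $f_e(z) = f(e^z)$, between $\Ma$ and $\Ba = \Bes^\alpha_{\infty,1}(\R)$. This substitution is multiplicative in the sense $(fg)_e = f_e g_e$, and therefore transports the Banach algebra structure of $\Ba$ to $\Ma$. The Banach algebra property of $\Ba$ itself has already been quoted in Notation \ref{Def Classical function spaces} from \cite[p.~222]{RuSi}: $\Bes^\alpha_{p,q}$ is a Banach algebra whenever $\alpha > 1/p$, which for $p = \infty$ is satisfied by every $\alpha > 0$.

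For (2), the chain is a concatenation of four standard Littlewood--Paley embeddings on $\R$, which I would verify in order of increasing depth. The middle inclusion $\Ba \hookrightarrow \Bes^\alpha_{\infty,\infty}$ is just $\ell^1(\Z) \hookrightarrow \ell^\infty(\Z)$ applied to the sequence $(2^{|n|\alpha}\|f \ast \check{\Fdyad_n}\|_\infty)_{n \in \Z}$, and the neighbouring $\Bes^\beta_{\infty,\infty} \hookrightarrow \Ba$ is obtained from the direct estimate
\[ \|f\|_{\Bes^\alpha_{\infty,1}} \leq \|f\|_{\Bes^\beta_{\infty,\infty}} \sum_{n \in \Z} 2^{|n|(\alpha-\beta)}, \]
where the geometric series converges because $\alpha < \beta$. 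For $C^m_b \hookrightarrow \Bes^\beta_{\infty,\infty}$ with $m > \beta$, I would use that $\Fdyad_n$ vanishes at the origin for $n \neq 0$, so $\check{\Fdyad_n}$ has vanishing moments of all orders; a Taylor expansion of $f$ to order $m$ inside the convolution then yields $\|f \ast \check{\Fdyad_n}\|_\infty \lesssim 2^{-|n|m}\|f\|_{C^m_b}$ after rescaling, so the factor $2^{|n|\beta}$ is absorbed ($n = 0$ being handled by $\|\check{\Fdyad_0}\|_1 < \infty$). For $\Bes^\alpha_{\infty,\infty} \hookrightarrow C^n_b$ with $\alpha > n$, I would apply Bernstein's inequality to each frequency-localised piece to obtain $\|(f \ast \check{\Fdyad_k})^{(n)}\|_\infty \lesssim 2^{-|k|(\alpha-n)}\|f\|_{\Bes^\alpha_{\infty,\infty}}$, and sum on $k$.

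The only step that is not immediate is at the end of this last embedding: one must justify that the Littlewood--Paley expansion $\sum_k f \ast \check{\Fdyad_k}$ reconstructs $f$ strongly enough that termwise $n$-fold differentiation recovers $f^{(n)}$. Because $\Bes^\alpha_{\infty,\infty}$ is already a subspace of $C_b^0$ by the convention of Notation \ref{Def Classical function spaces}, the partial sums converge to $f$ uniformly and no polynomial ambiguity arises (as it would in the homogeneous case); the uniform convergence of the series of $n$-th derivatives then yields $f \in C^n_b$ by a routine real-analysis argument. This is the only mild technicality in the whole proposition; everything else reduces to elementary one-line norm comparisons.
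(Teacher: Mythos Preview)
Your proposal is correct. The paper itself does not prove this proposition at all but simply defers to \cite[Propositions 4.8 and 4.9]{Kr}, so there is nothing to compare your argument against; your route---transporting the algebra structure via $f \mapsto f_e$ for (1), and for (2) the four standard Littlewood--Paley embeddings via $\ell^1 \hookrightarrow \ell^\infty$, a geometric series, vanishing moments plus Taylor expansion, and Bernstein's inequality---is the expected one and is carried out without gaps.
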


We will define the $\Ma$ functional calculus for a $0$-sectorial operator $A$ 
in terms of the holomorphic functional calculus.
The following lemma from \cite[Lemma 4.15]{Kr} will be useful.

\begin{lem}\label{Lem HI dense in diverse spaces}~
Let $f \in \Ma$ and $(\Fdyad_n)_n$ a dyadic partition of unity on $\R.$
\begin{enumerate}
\item The series $f = \sum_{n \in \Z} (f_e \ast \check{\Fdyad_n}) \circ \log$ converges in $\Ma.$
Note that $(f_e \ast \check{\Fdyad_n})\circ \log$ belongs to 
\begin{equation}\label{Equ HI dense in Ba}
\bigcap_{0 < \omega < \pi} \HI(\Sigma_\omega) \cap \Ma,
\end{equation}
so that \eqref{Equ HI dense in Ba} is dense in $\Ma.$
\item Let $\psi \in C^\infty_c(\R)$ such that $\psi(t) = 1$ for $|t|\leq 1$ and $\psi(t) = 0$ for $|t| \geq 2.$
Further let $\psi_n = \psi(2^{-n}\cdot).$
Then $(f_e \ast \check{\psi_n})\circ \log$ converges to $f$ in $\Ma.$
\end{enumerate}
\end{lem}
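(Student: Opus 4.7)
The plan is to reduce both statements to standard Besov-space estimates on the real line for $f_e \in \Ba$, and then transport them to $\Ma$ via the isometry $g \mapsto g_e$.

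For part (1), I would first verify that the Littlewood-Paley series $\sum_n f_e \ast \check{\Fdyad_n}$ converges to $f_e$ in $\Ba = \Bes^\alpha_{\infty,1}$. By the Fourier-support disjointness $\widetilde{\Fdyad_k} \Fdyad_n = 0$ for $|n - k| \geq 2$ from \eqref{Equ Partition of unity}, the $k$-th Littlewood-Paley block of $f_e - S_N f_e$ vanishes for $|k| \leq N - 1$, and for larger $|k|$ it is controlled, via Young's inequality together with the scale-invariant bound $\|\check{\Fdyad_k}\|_1 \leq C$, by a constant times $\sum_{|n-k| \leq 1} \|f_e \ast \check{\Fdyad_n}\|_\infty$. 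Summing yields $\|f_e - S_N f_e\|_{\Ba} \lesssim \sum_{|n| \geq N-1} 2^{|n|\alpha} \|f_e \ast \check{\Fdyad_n}\|_\infty$, the tail of the series defining $\|f_e\|_{\Ba}$, which tends to $0$. Since $\|\cdot\|_{\Ma} = \|(\cdot)_e\|_{\Ba}$, this yields $f = \sum_n (f_e \ast \check{\Fdyad_n}) \circ \log$ in $\Ma$.

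Next I would show that each summand $g_n := f_e \ast \check{\Fdyad_n}$ satisfies $g_n \circ \log \in \HI(\Sigma_\omega) \cap \Ma$. Membership in $\Ma$ is immediate since $\check{\Fdyad_n} \in \mathcal{S}(\R) \subset L^1(\R)$, and convolution with $L^1$ preserves each dyadic block. For the holomorphic extension, the point is that $\Fdyad_n \in C^\infty_c(\R)$, so each horizontal translate $\check{\Fdyad_n}(\cdot + it)$ is itself Schwartz (it is the inverse Fourier transform of the still-$C^\infty_c$ function $\xi \mapsto \Fdyad_n(\xi) e^{-t\xi}$). Therefore $g_n(z) := \int_\R f_e(y) \check{\Fdyad_n}(z - y)\, dy$ defines an entire extension of $g_n$ that is uniformly bounded on every horizontal strip $|\Im z| \leq \omega < \pi$. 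Since $\log$ maps $\Sigma_\omega$ biholomorphically onto $\{|\Im z| < \omega\}$, this yields $g_n \circ \log \in \HI(\Sigma_\omega)$ for every $\omega \in (0, \pi)$. Density of \eqref{Equ HI dense in Ba} in $\Ma$ then follows, since the finite partial sums of the series from the previous step lie in this intersection and approximate $f$ in $\Ma$.

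For part (2), I would exploit the scale-invariant identity $\|\check{\psi_n}\|_1 = \|\check{\psi}\|_1$ (substitute $\eta = 2^{-n}\xi$). Writing $(f_e - f_e \ast \check{\psi_n}) \ast \check{\Fdyad_k} = f_e \ast \check{\Fdyad_k} - (f_e \ast \check{\Fdyad_k}) \ast \check{\psi_n}$, Young's inequality bounds this $k$-th block by $(1 + \|\check{\psi}\|_1)\|f_e \ast \check{\Fdyad_k}\|_\infty$; moreover it vanishes for $|k| \leq n - 1$, since then $\supp \Fdyad_k \subset \{|\xi| \leq 2^n\}$, where $\psi_n \equiv 1$, so $\Fdyad_k(1 - \psi_n) = 0$. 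Summing gives $\|f_e - f_e \ast \check{\psi_n}\|_{\Ba} \lesssim \sum_{|k| \geq n} 2^{|k|\alpha} \|f_e \ast \check{\Fdyad_k}\|_\infty \to 0$ as $n \to \infty$, which transports back to convergence in $\Ma$. The main technical obstacle throughout is the Paley-Wiener extension of $g_n$ in the bare $L^\infty$ setting: one must show that the convolution integral defining $g_n(z)$ converges absolutely and locally uniformly on strips $|\Im z| < \omega < \pi$, via the rapid decay of $\check{\Fdyad_n}(\cdot + it)$ uniformly in $t$ on compact subintervals of $(-\pi,\pi)$, despite the $t$-dependence of its Schwartz seminorms.
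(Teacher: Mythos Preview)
Your proposal is correct. The paper itself does not supply a proof of this lemma but instead cites \cite[Lemma 4.15]{Kr}, so there is no in-paper argument to compare against; your approach via direct Besov-space tail estimates on $\Ba$ together with the Paley--Wiener analytic extension of $\check{\Fdyad_n}$ is exactly the standard route and carries through without gaps. One minor remark: for membership of $g_n = f_e \ast \check{\Fdyad_n}$ in $\Ba$, you can alternatively note that $g_n \ast \check{\Fdyad_k} = 0$ for $|k-n| \geq 2$, so the $\Ba$-series for $g_n$ has at most three nonzero terms---slightly more direct than invoking $L^1$-boundedness of convolution on $\Ba$, though both are fine.
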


Lemma \ref{Lem HI dense in diverse spaces} enables to base the $\Ma$ calculus on the $\HI$ calculus.

\begin{defi}\label{Def Line calculi}
Let $A$ be a 0-sectorial operator and $\alpha > 0.$
We say that $A$ has a (bounded) $\Ma$ calculus if there exists a constant $C > 0$ such that
\[
\|f(A)\| \leq C \|f\|_{\Ma}\quad (f \in \bigcap_{0 <\omega < \pi} \HI(\Sigma_\omega) \cap \Ma).
\]
In this case, by density of $\bigcap_{0<\omega < \pi} \HI(\Sigma_\omega) \cap \Ma$ in $\Ma,$ the algebra homomorphism $u : \bigcap_{0<\omega < \pi} \HI(\Sigma_\omega) \cap \Ma \to B(X)$
given by $u(f) = f(A)$ can be continuously extended in a unique way to a bounded algebra homomorphism
\[u: \Ma \to B(X),\,f \mapsto u(f).\]
We write again $f(A) = u(f)$ for any $f \in \Ma.$
\end{defi}

We recall that (cf. \cite{CDMY}) $A$ has a $\Ma$ calculus if and only if $A$ has a $\HI(\Sigma_\omega)$ calculus for all $\omega > 0$ and there is a constant $C$ such that
\[ \| f(A) \| \leq C \omega^{-\alpha} \| f \|_{\HI(\Sigma_\omega)} \]
for all $\omega > 0.$
The following convergence property extends the well-known Convergence Lemma for the $\HI$ calculus \cite[Lemma 2.1]{CDMY}.

\begin{prop}\label{Prop Convergence Bes}
Let $A$ be a $0$-sectorial operator with bounded $\Ma$ calculus for some $\alpha >  0.$
Then the following convergence property holds.
Let $\beta > \alpha$ and $(f_n)_{n \in \N}$ be a sequence such that $f_{n,e}$ belongs to $\Bbii$ with
\begin{enumerate}
\item[(a)] $\sup_{n \in \N} \|f_{n,e}\|_{\Bbii} < \infty,$
\item[(b)] $f_n(t) \to f(t)$ pointwise on $\R_+$ for some function $f.$
\end{enumerate}
Then
\begin{enumerate}
\item $f_e \in \Bbii,$
\item $f_n(A) x \to f(A) x$ for all $x \in X.$
\end{enumerate}
In particular, if $(\ddyad_n)_{n \in \Z}$ is a dyadic partition of unity on $\R_+$, then for any $x \in X,$
\begin{equation}\label{Equ Convergence Bes}
x = \sum_{n \in \Z} \ddyad_n(A)x \quad (\text{convergence in }X).
\end{equation}
\end{prop}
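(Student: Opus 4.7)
The strategy mirrors the CDMY Convergence Lemma for $\HI$ but localizes in frequency on the logarithmic scale. Fix a dyadic partition of unity $(\Fdyad_k)_{k \in \Z}$ on $\R$ and set
\[ g_{n,k} = (f_{n,e} \ast \check{\Fdyad_k}) \circ \log, \qquad g_k = (f_e \ast \check{\Fdyad_k}) \circ \log. \]
Once we know $f_e \in \Bbii,$ Lemma \ref{Lem HI dense in diverse spaces}(1) yields the decompositions $f_n = \sum_k g_{n,k}$ and $f = \sum_k g_k,$ both converging in $\Ma,$ with each piece lying in $\bigcap_{\omega < \pi} \HI(\Sigma_\omega) \cap \Ma.$ The plan is to apply the classical $\HI$ convergence lemma for each fixed $k$ and control the tails uniformly in $n$ via a $2^{-|k|(\beta - \alpha)}$ decay.

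Assertion (1) is a standard Fatou-type argument. Since $\Bbii \hookrightarrow C^0_b$ by Proposition \ref{Prop Elementary Mih Hor}, the $f_{n,e}$ are uniformly bounded, so dominated convergence gives $(f_{n,e} \ast \check{\Fdyad_k})(z) \to (f_e \ast \check{\Fdyad_k})(z)$ pointwise. Taking $\sup_z$ and multiplying by $2^{|k|\beta}$ yields $\|f_e\|_{\Bbii} \leq \sup_n \|f_{n,e}\|_{\Bbii} =: M.$

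For (2), the crucial uniform estimate is $\|g_{n,k}\|_{\Ma} \leq C\, 2^{-|k|(\beta - \alpha)} M,$ and likewise for $g_k.$ This follows from \eqref{Equ Partition of unity}: the $\Ba$-norm expansion $\sum_j 2^{|j|\alpha} \|f_{n,e} \ast \check{\Fdyad_k} \ast \check{\Fdyad_j}\|_\infty$ collapses to $|j-k| \leq 1,$ each term bounded by $\|\check{\Fdyad_j}\|_1 \|f_{n,e} \ast \check{\Fdyad_k}\|_\infty \leq C \cdot 2^{-|k|\beta} M,$ weighted by $2^{|j|\alpha} \sim 2^{|k|\alpha}.$ The bounded $\Ma$ calculus then produces the summable bound $\|g_{n,k}(A)\| \leq C' 2^{-|k|(\beta - \alpha)} M.$ For each fixed $k,$ the function $g_{n,k}$ extends to a uniformly bounded holomorphic function on every $\Sigma_\omega$ (since $\check{\Fdyad_k}$ is Schwartz) and converges pointwise to $g_k$ on $\R_+$ by dominated convergence; Montel's theorem together with the identity principle upgrades this to uniform convergence on compact subsets of $\Sigma_\omega,$ so the classical $\HI$ Convergence Lemma \cite[Lemma 2.1]{CDMY} gives $g_{n,k}(A)x \to g_k(A)x.$ An $\varepsilon/2$-splitting of $\|f_n(A)x - f(A)x\| \leq \sum_k \|g_{n,k}(A)x - g_k(A)x\|$ at $|k| = K$ large (controlled by the tail of $\sum 2^{-|k|(\beta-\alpha)}$), followed by $n \to \infty,$ finishes the proof.

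The statement \eqref{Equ Convergence Bes} is then obtained by applying the result to $f_N = \sum_{|n| \leq N} \ddyad_n$ with pointwise limit $f \equiv 1.$ Since at most two $\ddyad_n$ overlap at any point, all derivatives of $f_{N,e}$ are uniformly bounded, so $\sup_N \|f_{N,e}\|_{C^m_b} < \infty$ for every $m \in \N,$ and $C^m_b \hookrightarrow \Bbii$ for $m > \beta$ by Proposition \ref{Prop Elementary Mih Hor}; since $1(A) = \Id,$ the series $\sum_n \ddyad_n(A)x$ converges to $x.$ The main technical hurdle is precisely the step upgrading pointwise convergence on $\R_+$ to uniform-on-compacta convergence on a sector so that the classical $\HI$ convergence lemma applies piece by piece, and matching this with the Besov-tail decay to exchange limit and summation.
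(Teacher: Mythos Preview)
Your proof is correct and follows essentially the same strategy as the paper: decompose via the dyadic Fourier partition $(\Fdyad_k)_k$, invoke the classical $\HI$ Convergence Lemma on each piece, and control the tails with the $2^{-|k|(\beta-\alpha)}$ decay coming from the $\Bbii$ bound. The only cosmetic differences are that the paper proves (1) via the iterated-difference description of $\Bbii$ rather than directly from the Littlewood--Paley definition, routes the tail estimate through an intermediate $\Bes^\gamma_{\infty,\infty}$ norm with $\alpha<\gamma<\beta$, verifies pointwise convergence on the full sector $\Sigma_\omega$ by a direct dominated-convergence argument (making your Montel/identity-principle step unnecessary, though still valid), and for \eqref{Equ Convergence Bes} appeals to Lemma~\ref{Lem Overlapping} instead of your simpler $C^m_b$ bound.
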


\begin{proof}
By \cite[Theorem 2.5.12]{Triea}, the norm on $\Bes^{\beta}_{\infty,\infty}$ is equivalent to the following norm:
\begin{equation}\label{Equ Bbii norm}
\|g\|_{L^\infty(\R)} + \sup_{x,t \in \R,t \neq 0} |t|^{-\beta} |\Delta_t^M g(x)|
\end{equation}
for a fixed $M \in \N$ such that $M > \beta.$
Here, $\Delta_t^M$ is the iterated difference defined recursively by $\Delta_t^Mg(x) = \Delta_t^1(\Delta_t^{(M-1)}g)(x)$ and $\Delta_t^1 g(x) = g(x + t) - g(x).$
More precisely, by \cite[Remark 3]{Triea}, a $g \in L^\infty(\R)$ belongs to $\Bbii$ if and only if the above expression is finite.
We have
\[\sup_{x,t \in \R,t \neq 0} |t|^{-\beta} |\Delta_t^M f_e(x)| = \sup_{x,t} |t|^{-\beta} \lim_n |\Delta_t^M f_{n,e}(x)|
\leq \sup_{x,t,n} |t|^{-\beta} |\Delta_t^M f_{n,e}(x)| \leq \sup_n \|f_{n,e}\|_{\Bes^\beta_{\infty,\infty}} < \infty\]
and similarly $\|f\|_\infty \leq \sup_n \|f_n\|_\infty.$
Therefore, assertion (1) of the proposition follows.\\

Let $(\Fdyad_k)_{k \in \Z}$ be a dyadic partition of unity on $\R.$
Note that by the boundedness of the $\Ma$ calculus and Lemma \ref{Lem HI dense in diverse spaces},
$f_n(A) = \sum_k (f_{n,e} \ast \check{\Fdyad_k})\circ\log(A).$
We first show the stated convergence for each summand and claim that for any $x\in X$ and fixed $k\in \Z,$
\begin{equation}\label{Equ 6}
(f_{n,e} \ast \check{\Fdyad_k})\circ\log (A) x \to (f_e \ast \check{\Fdyad_k})\circ\log(A)x.
\end{equation}
Indeed, this follows from the well-known Convergence Lemma of the $\HI$ calculus \cite[Lemma 2.1]{CDMY}.
Fix some angle $\omega > 0.$
Firstly,
\[ \|(f_{n,e} \ast \check{\Fdyad_k})\circ\log \|_{\infty,\omega} \leq \|f_n\|_{L^\infty(\R_+)} \sup_{|\theta| < \omega} \| \check{\Fdyad_k}(i \theta - \cdot)\|_{L^1(\R)} \leq C.\]
Secondly, for any $z \in \Sigma_\omega,$
\[ f_{n,e} \ast \check{\Fdyad_k}(\log z) = \int_\R f_{n,e}(s) \check{\Fdyad_k}(\log z-s) ds \to \int_\R f(s) \check{\Fdyad_k}(\log z-s) ds = f \ast \check{\Fdyad_k}(\log z)\]
by dominated convergence, and \eqref{Equ 6} follows.
For $n \in \N$ and $k\in \Z,$ put $x_{n,k} = (f_{n,e} \ast \check{\Fdyad_k})\circ\log(A)x,$ where $x\in X$ is fixed.
For any $N \in \N,$
\begin{align}
\|f_{n}(A)x - f(A)x\| & \leq \sum_{k\in \Z} \|(f_{n,e}\ast\check{\Fdyad_k})\circ\log(A)x -(f_e\ast\check{\Fdyad_k})\circ\log(A)x\|
\nonumber \\
& \leq \sum_{|k| \leq N} \|x_{n,k} - (f_e\ast\check{\Fdyad_k})\circ\log(A)x\| + \sum_{|k| > N} \left(\|x_{n,k}\| + \|(f_e\ast\check{\Fdyad_k})\circ\log(A)x\|\right)
\nonumber.
\end{align}
By \eqref{Equ 6}, we only have to show that
\begin{equation}\label{Equ 5}
\lim_{N \to \infty} \sup_n \sum_{|k| > N} \|x_{n,k}\| = 0.
\end{equation}
Fix some $\gamma \in (\alpha,\beta).$
\begin{align*}
\|x_{n,k}\| & \lesssim \|f_{n,e}\ast\check{\Fdyad_k}\|_{\Ba}\|x\| \lesssim \|f_{n,e}\ast\check{\Fdyad_k}\|_{\Bes^\gamma_{\infty,\infty}} \|x\|
= \sup_{l \in \Z} 2^{|l|\gamma} \| f_{n,e} \ast \check{\Fdyad_k} \ast \check{\Fdyad_l} \|_\infty \|x\| \\
& = \sup_{|l-k|\leq 1} 2^{|l|\gamma} \|f_{n,e} \ast \check{\Fdyad_k} \ast \check{\Fdyad_l} \|_\infty \|x\|
\lesssim 2^{|k|\gamma} \|f_{n,e} \ast \check{\Fdyad_k}\|_\infty \|x\|.
\end{align*}
By assumption of the proposition,
$\sup_n \|f_{n,e}\|_{\Bes^\beta_{\infty,\infty}} = \sup_{n,k} 2^{|k|\beta} \|f_{n,e} \ast \check{\Fdyad_k}\|_\infty < \infty.$
Therefore,
$\|x_{n,k}\| \lesssim 2^{|k|\gamma} 2^{-|k|\beta}\|x\|,$
and \eqref{Equ 5} follows.
To show \eqref{Equ Convergence Bes}, set $f_k = \sum_{n = -k}^k \ddyad_n.$
Then the sequence $(f_k)_{k \in \N}$ satisfies (a) and (b) with limit $f = 1.$
Indeed, the pointwise convergence is clear.
The uniform boundedness $\sup_{k \geq 1} \| f_{k,e} \|_{\Bbii} < \infty$ can be shown with Lemma \ref{Lem Overlapping} in Section \ref{Sec 3 Spectral Decomposition} below.
\end{proof}

Let $(\ddyad_n)_n$ be a homogeneous dyadic partition of unity on $\R_+$ and
\begin{equation}\label{Equ D}
D_A = \{ x \in X : \: \exists \, N \in \N :\: \ddyad_n(A)x  = 0 \quad (|n| \geq N) \}.
\end{equation}
Then $D_A$ is a dense subset of $X$ provided that $A$ has a bounded $\Ma$ calculus.
Indeed, for any $x \in X$ let $x_N = \sum_{k=-N}^{N} \ddyad_k(A)x.$
Then for $|n| \geq N + 1,\,\ddyad_n(A) x_N = \sum_{k=-N}^N (\ddyad_n \ddyad_k)(A)x = 0,$ so that $x_N$ belongs to $D_A.$
On the other hand, by \eqref{Equ Convergence Bes}, $x_N$ converges to $x$ for $N \to \infty.$
Clearly, $D_A$ is independent of the choice of $(\ddyad_n)_n.$
We call $D_A$ the calculus core of $A.$

Since a $\Ma$ calculus does require a bounded $\HI$ calculus, we would like to consider also a weaker calculus, which is much easier to check than a $\Ma$ calculus.

\begin{defi}
We define
\[ \Ma_1 = \{ f \in \Ma :\: \|f\|_{\Ma_1} = \sum_{n \in \Z} \| f \ddyad_n \|_{\Ma} < \infty \},\]
where $(\ddyad_n)_n$ is a fixed dyadic partition of unity on $\R_+.$
It is clear that this definition does not depend on the particular choice of $(\ddyad_n)_n,$ that $\Ma_1 \subset \Ma$ and that $\Ma_1$ is a Banach algebra.
\end{defi}

\begin{lem}\label{Lem HI0 dense in l1Ma}
Let $\alpha > 0$ and $\omega \in (0,\pi).$
The space $\HI_0(\Sigma_\omega)$ is contained in $\Ma_1$ and is dense in it.
\end{lem}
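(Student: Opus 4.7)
My approach is to treat the inclusion and the density separately. For the inclusion the key is to estimate $\|h\ddyad_n\|_{\Ma}$ by $(1+|n|)^{-1-\epsilon}$ using the decay built into $\HI_0$; for the density a two-step reduction (first to compactly supported $f$, then to the Fourier truncations of Lemma \ref{Lem HI dense in diverse spaces}) does the job.

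For the inclusion $\HI_0(\Sigma_\omega)\subset\Ma_1$: given $h\in\HI_0(\Sigma_\omega)$, the function $h_e$ is holomorphic on the strip $\{|\Im z|<\omega\}$ and satisfies $|h_e(z)|\leq C(1+|z|)^{-1-\epsilon}$ there. Cauchy's formula on circles of radius $\omega/2$ transfers this decay to every derivative, so $|h_e^{(k)}(x)|\leq C_k(1+|x|)^{-1-\epsilon}$ on $\R$. Since $(h\ddyad_n)_e(z)=h_e(z)\ddyad_e(z-n\log 2)$ is supported in a unit-length interval centered at $n\log 2$, the Leibniz rule together with the embedding $C^m_b\hookrightarrow\Ba$ for $m>\alpha$ (Proposition \ref{Prop Elementary Mih Hor}(2)) gives $\|h\ddyad_n\|_{\Ma}\lesssim(1+|n|)^{-1-\epsilon}$, which is summable in $n$.

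For the density of $\HI_0(\Sigma_\omega)$ in $\Ma_1$ I would argue in two steps. \emph{Step (a):} set $f_N:=\sum_{|n|\leq N}f\ddyad_n$. Using $\ddyad_m\ddyad_n=0$ for $|m-n|\geq 2$ and $\widetilde\ddyad_m\ddyad_m=\ddyad_m$, one checks that $(f-f_N)\ddyad_m=0$ for $|m|\leq N-2$ and obtains $\|(f-f_N)\ddyad_m\|_{\Ma}\lesssim\|f\ddyad_m\|_{\Ma}$ for the remaining boundary and tail indices, so $\|f-f_N\|_{\Ma_1}\lesssim \sum_{|m|\geq N-1}\|f\ddyad_m\|_{\Ma}\to 0$. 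This reduces matters to compactly supported $\phi:=f_N$ with $\supp\phi\subset[2^{-N-1},2^{N+1}]$. \emph{Step (b):} apply Lemma \ref{Lem HI dense in diverse spaces}(1) to $\phi$ to get $\phi=\sum_k h_k$ in $\Ma$, where $h_k=(\phi_e\ast\check\Fdyad_k)\circ\log$. Because $\phi_e$ is compactly supported and $\check\Fdyad_k$ is entire and Schwartz on every horizontal line, each convolution $\phi_e\ast\check\Fdyad_k$ is Schwartz on $\R$ and entire of bounded exponential type on $\{|\Im z|<\omega\}$, so each $h_k$ (and hence each partial sum $\phi^{(K)}:=\sum_{|k|\leq K}h_k$) belongs to $\HI_0(\Sigma_\omega)$.

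The main obstacle is upgrading the $\Ma$-convergence $\phi^{(K)}\to\phi$ guaranteed by Lemma \ref{Lem HI dense in diverse spaces} to $\Ma_1$-convergence, since $\phi^{(K)}$ is no longer compactly supported. I would split
\[\|\phi-\phi^{(K)}\|_{\Ma_1}\leq \sum_{|m|\leq N+1}\|(\phi-\phi^{(K)})\ddyad_m\|_{\Ma}+\sum_{|m|\geq N+2}\|\phi^{(K)}\ddyad_m\|_{\Ma}.\]
The first (finite) sum is $\lesssim\|\phi-\phi^{(K)}\|_{\Ma}\to 0$. For the tail write $\phi^{(K)}_e=\phi_e\ast\check\chi_K$ with $\chi_K=\sum_{|k|\leq K}\Fdyad_k$ supported in $[-2^K,2^K]$; scaling gives $|(\check\chi_K)^{(j)}(y)|\lesssim 2^{K(j+1)}(1+2^K|y|)^{-M}$. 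Since $\supp\phi_e$ is bounded independently of $K$, for $z$ in $\supp\ddyad_e(\cdot-m\log 2)$ with $|m|\geq N+2$ one has $|z-s|\gtrsim|m|$ for $s\in\supp\phi_e$, producing $|(\phi^{(K)}_e)^{(j)}(z)|\lesssim 2^{K(j+1-M)}|m|^{-M}$. Choosing $r>\alpha$, then $M>r+1$, and using $C^r_b\hookrightarrow\Ba$ yields $\|\phi^{(K)}\ddyad_m\|_{\Ma}\lesssim 2^{-K}|m|^{-r-2}$; the tail sum is then $\lesssim 2^{-K}\to 0$. An $\epsilon/2$-combination of steps (a) and (b) finishes the density argument.
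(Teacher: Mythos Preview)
Your proof is correct and follows essentially the same route as the paper: show the inclusion via Cauchy estimates and the embedding $C^m_b\hookrightarrow\Ba$, reduce density to compactly supported $f$, then approximate by Fourier-side mollification $(\phi_e\ast\check\chi_K)\circ\log$ and control the dyadic tail. The only difference is bookkeeping in the tail estimate: the paper uses the difference characterization of $\Ba$ together with $L^1$-bounds on $\check\psi_n$ over distant intervals (yielding a tail that is small uniformly in the mollification parameter), whereas you use pointwise bounds on $(\check\chi_K)^{(j)}$ and the embedding $C^r_b\hookrightarrow\Ba$ to get a tail of size $\lesssim 2^{-K}$ --- note that your scaling estimate for $\check\chi_K$ is justified precisely because $\chi_K=\sum_{|k|\le K}\Fdyad_k$ equals $\Phi(2^{-K}\cdot)$ for a fixed $\Phi\in C^\infty_c$, so your mollifiers coincide with those of Lemma~\ref{Lem HI dense in diverse spaces}(2).
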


\begin{proof}
We first show that $\HI_0(\Sigma_\omega) \subset \Ma_1.$
Let $C , \epsilon > 0$ such that $|f(\lambda)| \leq C (1 +|\log \lambda|)^{-1-\epsilon}$ for $\lambda \in \Sigma_\omega.$
For $n \in \Z$ fixed, $t \in [2^{n-1},2^{n+1}]$ and any $k \in \N,$ we have by the Cauchy integral formula $|t^k f^{(k)}(t)| \leq C_{k,\omega} \sup_{\lambda \in \Sigma_\omega,\,|\lambda| \in [2^{n-2},2^{n+2}]} |f(\lambda)| \leq C_{k,\omega}' (1+|\log 2^{n}|)^{-1-\epsilon}.$
Thus by \eqref{Equ Classical Mihlin condition}, $\|f \ddyad_n\|_{\Ma} \lesssim (1 + |n|)^{-1-\epsilon},$ the right hand side being clearly summable over $n \in \Z.$

Now for the density statement.
To this end, let $\psi \in C^\infty_c(\R)$ with $\supp \psi \subseteq [-2,2]$ and $\psi(t) = 1$ for $t \in [-1,1].$
Further let for $n \in \N,$ $\psi_n = \psi(2^{-n} \cdot).$
Let $f \in \Ma_1$ which shall be approximated by $\HI_0(\Sigma_\omega)$ functions.
Since the span of compactly supported functions is dense in $\Ma_1,$ we can assume that $f$ has support say in $[2^{l_0 - 1},2^{l_0 + 1}].$
Let $f_n = (f_e \ast \check{\psi_n})\circ \log,$ which belongs to $\HI(\Sigma_\omega),$ since $\check{\psi_n}$ is an entire function.
It also belongs to $\HI_0(\Sigma_\omega),$ since $\sup_{|\theta| < \omega}|\check{\psi_n}(\cdot + i \theta)|$ decreases rapidly and $f_e$ has compact support.
We claim that $f_n \to f$ in $\Ma_1$ as $n \to \infty.$
Indeed, we have
\[ \sum_{l \in \Z} \|(f-f_n)\ddyad_l\|_{\Ma} \lesssim \sum_{|l| \leq L} \| f - f_n \|_{\Ma} + \sum_{|l| > L} \|f \ddyad_l\|_{\Ma} + \sum_{|l| > L}\|f_n \ddyad_l \|_{\Ma}.\]
The terms in the first sum converge to $0$ according to Lemma \ref{Lem HI dense in diverse spaces}.
The second sum vanishes for $L > |l_0| + 1,$ so it remains to show that the third sum converges to $0$ as $L \to \infty,$ uniformly in $n \in \N.$
In the following calculation, we use the norm description of the Besov space $\Bes^\alpha_{\infty,1}$, see \cite[p.~110]{Triea}
\[ \| g \|_{\Bes^\alpha_{\infty,1}} \cong \| g \|_{L^\infty(\R)} + \int_{-1}^1 |h|^{-\alpha} \sup_{x \in \R} | \Delta^M_h g(x) | \frac{dh}{|h|} ,\]
where $M > \alpha$ and $\Delta^M_h g(x)$ is the iterated difference as already used in \eqref{Equ Bbii norm}.
Note that it commutes with convolutions, i.e. $\Delta^M_h (g \ast k)(x)= (\Delta^M_h g) \ast k (x),$ and for products, we have
$\Delta^M_h(gk) = \sum_{m=0}^M {M \choose m} \Delta^m_h g \Delta^{M-m}_h \tau_{mh} k,$ where $\tau_{y} k (x) = k(x+y).$
Then
\[ \sum_{|l| > L} \|f_n \ddyad_l\|_{\Ma} \cong \sum_{|l| > L} \| (f_e \ast \check{\psi_n})\cdot \ddyad_{l,e} \|_\infty + \int_{-1}^1 |h|^{-\alpha} \sup_{x \in \R} | \Delta^M_h (f_e \ast \check{\psi_n} \cdot \ddyad_{l,e})(x)| \frac{dh}{|h|}.\]
We estimate
\begin{align*} \sum_{|l| > L} \| f_e \ast \check{\psi_n} \cdot \ddyad_{l,e} \|_\infty & \lesssim \sum_{|l| > L} \| f_e \ast \check{\psi_n} \|_{L^\infty([\log(2) (l-1),\log(2) (l+1)])} \\
& \leq \sum_{|l| > L} \|f_e\|_{\infty} \|\check{\psi_n}\|_{L^1([\log(2)(l+l_0-2),\log(2)(l+l_0+2)])}.
\end{align*}
Note that $\check{\psi_n} = 2^n \check{\psi}(2^n \cdot)$ and that $\check{\psi}$ is rapidly decreasing.
Thus, the above last sum is finite and converges to $0$ as $L \to \infty,$ uniformly in $n.$
Finally, we estimate
\begin{align*}
\int_{-1}^1 & |h|^{-\alpha} \sup_{x \in \R} |\Delta^M_h(f_e \ast \check{\psi_n} \cdot \ddyad_{l,e})(x)| \frac{dh}{|h|}
\lesssim \int_{-1}^1 |h|^{-\alpha} \sum_{m=0}^M \sup_{|x-\log(2) l| < \log(2)} |\Delta^m_h(f_e \ast \check{\psi_n})(x)| \: \|\Delta_h^{M-m}\ddyad_{l,e}\|_\infty \frac{dh}{|h|} \\
& = \int_{-1}^1 |h|^{-\alpha} \sum_{m=0}^M \sup_{|x-\log(2) l| < \log(2)} |(\Delta^m_h f_e) \ast \check{\psi_n}(x)| \: \|\Delta_h^{M-m}\ddyad_{l,e}\|_\infty \frac{dh}{|h|} \\
& \leq \int_{-1}^1 |h|^{-\alpha} \sum_{m=0}^M \| \Delta^m_h f_e \|_{L^\infty(\R)}  \|\check{\psi_n}\|_{L^1([\log(2)(l+l_0-2),\log(2)(l+l_0+2)])} \: \|\Delta_h^{M-m}\ddyad_{l,e}\|_\infty \frac{dh}{|h|} \\
& \lesssim \| f_e \|_{\Bes^\alpha_{\infty,1}}\|\check{\psi_n}\|_{L^1([\log(2)(l+l_0-2),\log(2)(l+l_0+2)])} \|\ddyad_{l,e}\|_{\Bes^\alpha_{\infty,1}},
\end{align*}
which is again summable in $|l| > L,$ and converges to $0$ as $L \to \infty,$ uniformly in $n \in \N.$
\end{proof}

The following observation will be useful later on.

\begin{lem}
Let $\alpha > 0,\:f \in \Ma_1,\: \phi \in C^\infty_c(0,\infty)$ and assume that $|f(\lambda)|> \beta >0$ for $\lambda \in \supp (\phi).$
Then $\phi \cdot f^{-1}$ belongs again to $\Ma_1.$
\end{lem}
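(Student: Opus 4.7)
The lemma will follow from three observations: the compact support of $\phi f^{-1}$, the continuity of $f_e$, and a composition result for Besov spaces.

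Since $\phi\in C^\infty_c(0,\infty)$, the product $\phi f^{-1}$ (with the natural convention of being $\phi/f$ on $\supp\phi$ and zero outside) has compact support in $(0,\infty)$. Consequently, only finitely many of the products $\ddyad_n\cdot(\phi f^{-1})$ are nonzero, so the series $\sum_n\|\ddyad_n(\phi f^{-1})\|_\Ma$ defining $\|\phi f^{-1}\|_{\Ma_1}$ reduces to a finite sum. Using the Banach algebra property of $\Ma$ (Proposition~\ref{Prop Elementary Mih Hor}) to control each summand by $\|\ddyad_n\|_\Ma\cdot\|\phi f^{-1}\|_\Ma$, it suffices to show $\phi f^{-1}\in\Ma$, equivalently $(\phi f^{-1})_e=\phi_e\cdot(f_e)^{-1}\in\Ba$.

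Global inversion of $f_e$ is handled by enlarging the cutoff. By Proposition~\ref{Prop Elementary Mih Hor}, $\Ba\hookrightarrow C^0_b$, so $f_e$ is continuous; combined with the hypothesis $|f|>\beta$ on the compact set $\supp\phi$, this yields a $\chi\in C^\infty_c(0,\infty)$ with $\chi\phi=\phi$ and $|f|\geq\beta/2$ on $\supp\chi$. The image $K:=f_e(\supp\chi_e)$ is then a compact subset of $\{z\in\C:|z|\geq\beta/2\}\subset\C\setminus\{0\}$. Pick $h\in C^\infty(\C,\C)$ with uniformly bounded derivatives of all orders such that $h(z)=1/z$ on a neighborhood of $K$, for instance by multiplying $z\mapsto 1/z$ (smooth on $\{|z|>\beta/4\}$) by a smooth cutoff. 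Both $\phi_e\cdot(f_e)^{-1}$ and $\phi_e\cdot(h\circ f_e)$ vanish off $\supp\phi_e$ and coincide on it, hence
\[
\phi_e\cdot(f_e)^{-1}=\phi_e\cdot(h\circ f_e).
\]

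Since $\phi_e\in C^\infty_c(\R)\subset\Ba$ and $\Ba$ is a Banach algebra, the desired membership $\phi_e\cdot(h\circ f_e)\in\Ba$ follows once one shows $h\circ f_e\in\Baloc$. This is a standard composition statement for Besov spaces: if $F\in\Bes^\alpha_{\infty,1}(\R)$ is bounded and $h\in C^\infty$ is smooth on a neighborhood of the range of $F$, then $h\circ F\in\Baloc$. Using the difference norm
\[
\|g\|_\Ba\cong\|g\|_\infty+\int_{-1}^1|s|^{-\alpha}\|\Delta^M_s g\|_\infty\,\frac{ds}{|s|}\quad(M>\alpha),
\]
one expands $\Delta^M_s(h\circ f_e)$ via iterated Leibniz and chain rules into a polynomial expression in the $\Delta^k_s f_e$ and the $\|h^{(j)}\|_\infty$, yielding the required estimate. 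This composition step, while classical (cf.\ Runst-Sickel), is the main technical point of the proof; everything else is bookkeeping around the support of $\phi$.
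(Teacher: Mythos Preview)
Your proof is correct and follows the same overall structure as the paper's: reduce via compact support to showing $\phi_e \cdot (f_e)^{-1} \in \Ba$, then control this using the difference-norm description of $\Ba$. The one substantive difference is in how the reciprocal/composition step is handled. The paper works directly with $h(z) = 1/z$: it derives by induction an explicit algebraic identity expressing $\Delta^M_h(1/f)(x)$ as a finite sum of terms of the form
\[
\frac{\prod_p \Delta^{m_p}_{h}\tau_{l_p h} f}{f(x)f(x+h)\cdots f(x+(k-1)h)},\qquad \sum_p m_p = M,
\]
and then splits $|h|^{-\alpha}$ among the factors, using $f_e \in \Bes^{\alpha(p)}_{\infty,\infty}$ for suitable $\alpha(p) < m_p$ on all but one factor and the $\Bes^{\alpha}_{\infty,1}$ norm on the remaining one. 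You instead replace $1/z$ by a globally smooth $h$ with bounded derivatives of all orders and invoke the Runst--Sickel composition theorem for $\Bes^\alpha_{\infty,1}$. Both routes are valid; the paper's is fully self-contained and exhibits exactly the cancellation structure needed, while yours is shorter but outsources the hard estimate to a reference. One small point of language: ``iterated Leibniz and chain rules'' is loose for finite differences of a composition --- the relevant identity is a discrete Fa\`a di Bruno--type formula, which is precisely what the paper writes out in the special case $h = 1/z$.
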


\begin{proof}
Replacing $f$ by $f_e = f \circ \exp$ and using the compact support of $\phi,$ we are reduced to show that if $f \in \Ba$ and $\phi \in C^\infty_c(\R)$ such that $|f(x)|>\beta > 0$ for $x \in \supp(\phi),$ then $\phi \cdot f^{-1}$ belongs to $\Ba.$
We use the norm description from \cite[p.~110]{Triea}
\[ \|g\|_{\Ba} \cong \|g\|_{L^\infty(\R)} + \int_{-1}^1 |h|^{-\alpha} \|\Delta^M_h g\|_{L^\infty(\R)} \frac{dh}{|h|},\]
where $M > \alpha$ and $\Delta^M_h$ stands for the iterated difference operator, defined by $\Delta^1_hg(x) = g(x+h)-g(x)$ and recursively, $\Delta^{M+1}_hg(x) = \Delta^1_h ( \Delta^M_g)(x).$
Using the product formula $\Delta^M_h(gk) = \sum_{m=0}^M {M \choose m} \Delta^m_hg \Delta^{M-m}_h \tau_{mh}k,$ where $\tau_yk(x) = k(x+y),$ we can estimate
\[ \| \phi \cdot f^{-1} \|_{\Ba} \lesssim \|\phi \cdot f^{-1}\|_\infty + \sum_{m=0}^M \int_{-1}^1 |h|^{-\alpha_1(m)} \|\Delta^m_h \phi\|_\infty |h|^{-\alpha_2(m)} \|\Delta^{M-m}_h \tilde{f}^{-1}\|_{\infty} \frac{dh}{|h|},\]
where $\alpha = \alpha_1(m) + \alpha_2(m)$ is a decomposition such that $\alpha_1(m) < m$ and $\alpha_2(m) < M - m,$ and $\tilde{f}^{-1}$ is equal to $f^{-1}$ on $\supp(\phi)$ and smoothly and boundedly from above and below continued beyond $\supp(\phi).$
Since $\phi \in C^\infty_c(\R) \subset \Bes^{\alpha_1(m)}_{\infty,\infty},$ we have $\sup_{|h|<1} |h|^{-\alpha_1(m)} \| \Delta^m_h \phi \|_\infty < \infty,$ so that now it suffices to show that 
\[ \int_{-1}^1 |h|^{-\alpha} \|\Delta^M_h f^{-1}\|_\infty \frac{dh}{|h|} < \infty,\]
where we moreover assumed that $\tilde{f}^{-1} = f^{-1}$ and $M,\alpha$ takes over the role of $M-m$ and $\alpha_2(m).$
We now claim that
\[ \Delta^M_h f^{-1}(x) = \sum_{k=2}^{M+1} \sum_{l=1}^{L_{k,M}} c_{k,l,M} \frac{X_1 \cdot \ldots \cdot X_{p_{k,l,M}}}{f(x)f(x+h)f(x+2h)\cdot \ldots \cdot f(x + (k-1)h)} ,\]
where the $X_p$ are of the form $X_p =\Delta^{m_p}_{h} \tau_{lh}f$ such that each time $\sum_{p=1}^{p_{k,l,M}} m_p = M.$
This claim can be shown by induction, using the Leibniz's type rule $\Delta^1_h (f_1 \cdot \ldots \cdot f_N) = \Delta^1_hf_1(x) \cdot f_2(x+h)\cdot \ldots \cdot f_N(x+h) + f_1(x) \Delta^1_hf_2(x) f_3(x+h) \cdot \ldots \cdot f_N(x+h) + \ldots + f_1(x) \cdot \ldots f_{N-1}(x) \Delta^1_h f_N(x),$ and the fact that $\Delta^1_h \frac{1}{f(x)f(x+h)\cdot \ldots \cdot f(x+(k-1)h)} = - \frac{\Delta^1_{h}\tau_{(k-1)h}f(x) + \Delta^1_h\tau_{(k-2)h}f(x) + \ldots + \Delta^1_hf(x)}{f(x)f(x+h)\cdot \ldots \cdot f(x+kh)}.$
This gives the estimate
\[ \int_{-1}^1 |h|^{-\alpha} \| \Delta^M_h f^{-1} \|_\infty \frac{dh}{|h|}
\lesssim \sum_{k=2}^{M+1} \sum_{l=1}^{L_{k,m}} \int_{-1}^1 |h|^{-\alpha} \|f^{-1}\|_\infty^k \prod_{p=1}^{p_{k,l,M}} \|\Delta^{m_p}_{h} f\|_\infty \frac{dh}{|h|}. \]
Now argue as before: decompose $\alpha = \alpha(1) + \ldots + \alpha(p_{k,l,M})$ with $\alpha(p) < m_p$ (which is possible since $\sum_{p=1}^{p_{k,l,M}} m_p = M > \alpha$), and use the fact that $\Ba \hookrightarrow \Bes^{\alpha(p)}_{\infty,\infty}$ for $\alpha(p) \leq \alpha,$ so that
$\sup_{h \neq 0} |h|^{- \alpha(p)} \|\Delta^{m_p}_{h} f\|_\infty < \infty$ for $p=1,\ldots,p_{k,l,M}-1.$
Use this estimate for all factors $|h|^{-\alpha(p)} \|\Delta^{m_p}_h f\|_\infty$ except the last, for which we use the $\Bes^{\alpha(p_{k,l,M})}_{\infty,1}$ norm, to deduce finally that
\[ \int_{-1}^1 |h|^{-\alpha} \prod_{p=1}^{p_{k,l,M}} \|\Delta_{h}^{m_p} f\|_\infty \frac{dh}{|h|} < \infty .\]
\end{proof}

\begin{defi}\label{Def ell1Ma calculus}
Let $A$ be a $0$-sectorial operator and $\alpha > 0.$
We say that $A$ has an $\Ma_1$ calculus if there is a constant $C > 0$ such that $\|f(A)\| \leq C \|f\|_{\Ma_1}$ for any $f \in \HI_0(\Sigma_\omega)$ and for some $\omega \in (0,\pi).$
In this case, by the density proved in Lemma \ref{Lem HI0 dense in l1Ma} above, we can extend the definition of $f(A)$ to all $f \in \Ma_1$
and have in particular $f(A) g(A) = (fg)(A)$ for any $f,g \in \Ma_1.$
\end{defi}

The set $\Ma_1$ is too small to contain many interesting, in particular singular spectral multipliers.
However the boundedness of an $\Ma_1$ calculus follows directly from common norm estimates for sectorial operators without additional information on kernel or square function estimates.
The following proposition gives a sufficient condition in terms of resolvents.

\begin{prop}\label{Prop Resolvent Ma0}
Let $A$ be a $0$-sectorial operator and $\alpha > 0.$
If
\[\sup \{ \| \lambda R(\lambda,A) \| :\: |\arg \lambda| = \omega \} \lesssim \omega^{-\alpha}\]
for $\omega \in (0,\pi),$ then $A$ has a bounded $\Ma_1$ calculus.
If 
\[R( \lambda R(\lambda,A) :\: |\arg \lambda| = \omega ) \lesssim \omega^{-\alpha}\]
for $\omega \in (0,\pi),$ then $A$ has an $R$-bounded $\Ma_1$ calculus, i.e. $\{ f(A) :\: f\in \Ma_1,\: \|f\|_{\Ma_1} \leq 1 \}$ is $R$-bounded.
\end{prop}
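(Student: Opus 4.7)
The plan is to verify $\|f(A)\| \leq C\|f\|_{\Ma_1}$ for $f$ in the dense subclass $\HI_0(\Sigma_\omega) \subset \Ma_1$ (from Lemma \ref{Lem HI0 dense in l1Ma}), by combining the Cauchy integral formula with a Paley--Littlewood decomposition of $f_e$ on the Fourier side. Inserting the hypothesis $\|R(\lambda,A)\|\lesssim \theta^{-\alpha}/|\lambda|$ into
\[ g(A) = \frac{1}{2\pi i}\int_{\partial\Sigma_\theta} g(\lambda) R(\lambda,A)\, d\lambda \qquad (g\in\HI_0(\Sigma_\theta))\]
gives the preliminary bound $\|g(A)\| \lesssim \theta^{-\alpha}\int_\R |g(e^{u\pm i\theta})|\, du$, valid uniformly in $\theta\in(0,\omega)$.

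Fix the dyadic partition $(\Fdyad_k)_{k\in\Z}$ on $\R$ from Definition \ref{Def Partitions of unity}(3). Decompose $f_e = \sum_{k\in\Z} f_e\ast\check{\Fdyad_k}$ (convergent by Lemma \ref{Lem HI dense in diverse spaces}), set $f_k := (f_e\ast\check{\Fdyad_k})\circ\log$, and note that $f = \sum_k f_k$ on $\R_+$. A standard approximation argument (truncating the Paley--Littlewood sum and deforming contours) gives $f(A) = \sum_k f_k(A)$ as operators once summability is established. Each $f_e\ast\check{\Fdyad_k}$ is entire of exponential type $2^{|k|}$, so by Paley--Wiener it is bounded on the strip $\{|\Im z|\leq 2^{-|k|}\}$ by $e\|f_e\ast\check{\Fdyad_k}\|_\infty$; the polynomial decay of $f_e$ inherited from $f\in\HI_0$ then places $f_k$ in $\HI_0(\Sigma_{2^{-|k|}})$.

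Applying the preliminary bound with $\theta_k := 2^{-|k|-1}$ gives $\|f_k(A)\| \lesssim 2^{|k|\alpha}\int_\R |f_e\ast\check{\Fdyad_k}(u+i\theta_k)|\,du$. To convert this $L^1$-integral into the Besov sum defining $\|f\|_{\Ma_1}$, insert under the integral the dyadic splitting $f_e = \sum_n (f\ddyad_n)_e$. Since $\supp (f\ddyad_n)_e \subset [(n-1)\log 2, (n+1)\log 2]$ has length $2\log 2$ uniformly in $n$, for each $n$ the contribution $\int |(f\ddyad_n)_e\ast \check{\Fdyad_k}(u+i\theta_k)|\, du$ splits into an $O(1)$-neighbourhood of $\supp(f\ddyad_n)_e$, bounded by the Paley--Wiener estimate $\|(f\ddyad_n)_e\ast\check{\Fdyad_k}\|_\infty$, and its complement, where the $2^{-|k|}$-scale Schwartz decay of $\check{\Fdyad_k}$ produces a tail $\lesssim 2^{-|k|N}\|(f\ddyad_n)_e\|_\infty$ for any $N$. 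Choosing $N>\alpha$ and summing gives
\[ \sum_k \|f_k(A)\| \lesssim \sum_n \bigl( \|(f\ddyad_n)_e\|_{\Ba} + \|(f\ddyad_n)_e\|_\infty \bigr) \lesssim \sum_n \|f\ddyad_n\|_{\Ma} = \|f\|_{\Ma_1}. \]
The $R$-bounded statement follows verbatim by replacing $\|R(\lambda,A)\|$ with the $R$-bound of $\{\lambda R(\lambda,A):|\arg\lambda|=\theta\}$ in the contour estimate and invoking the standard Kalton--Weis-type principle that $L^1$-integrated families of $R$-bounded operators inherit the $R$-bound.

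The main technical obstacle is the mismatch between the $L^1$-character of the Cauchy contour integral and the $\ell^1$-of-$L^\infty$ character of the Besov norm $\|\cdot\|_{\Ba}$; this is overcome by exploiting the uniform ($O(1)$) length of the supports of the dyadic pieces $(f\ddyad_n)_e$, which forces the dominant part of the $L^1$-integral to come from a set of fixed measure and thus reduces it, up to a rapidly decaying tail, to the $L^\infty$-norm that $\|f\|_{\Ma_1}$ actually measures.
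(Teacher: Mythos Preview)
Your proposal is correct and follows essentially the same route as the paper's proof: decompose $f_e$ dyadically on the Fourier side, apply the Cauchy integral for each piece on a contour at angle $\sim 2^{-|k|}$ with the resolvent hypothesis, and control the resulting $L^1$-integral via Paley--Wiener on the (uniformly bounded) support of $(f\ddyad_n)_e$ together with the Schwartz decay of $\check{\Fdyad_k}$ off it. The paper merely organizes this differently by first reducing to $\supp f \subset [\tfrac12,2]$, proving $\|f(A)\|\lesssim\|f\|_{\Ma}$ there, and then rescaling and summing the dyadic pieces $f\ddyad_n$, but the ingredients and the key estimates (including the $R$-bounded version via the integral $R$-bound lemma) are the same.
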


\begin{proof}
Let $(\Fdyad_m)_{m \in \Z}$ be a dyadic partition of unity on $\R.$
Then for $m \geq 1,$ we have $\Fdyad_m\ck(x+ i 2^{-m}) = [\Fdyad_1(2^{-m+1}\cdot) \exp(2^{-m} \cdot)]\ck(x).$
Since $\Fdyad_1(2 \cdot) \exp(\cdot)$ belongs to $C^\infty_c(\R),$ $[\Fdyad_1(2 \cdot) \exp(\cdot)]\ck$ is a rapidly decreasing function, so that in particular for $N > \alpha + 1,$
$|[\Fdyad_1(2\cdot) \exp(\cdot)]\ck(\xi)| \leq C ( 1 + |\xi| )^{-N}.$
Thus, $|\Fdyad_m\ck(x+ i2^{-m})| = 2^m |[\Fdyad_1(2\cdot) \exp(\cdot)]\ck(2^m x)| \leq C 2^m (1 + |2^m x|)^{-N}.$
Similarly, for $m \leq -1,$ one obtains $|\Fdyad_m\ck(x+ i2^{-|m|})| \leq C 2^{|m|} ( 1 + | 2^{|m|} x |)^{-N}.$
Let now $f \in \Ma_1$ with support in $[\frac12,2].$
Put $\rho_{m,e} = f_e \ast \Fdyad_m\ck.$
Then 
\[|\rho_{m,e}(x + i2^{-|m|})| \leq 2 \log(2) \|f\|_\infty \sup_{|y-x| \leq \log(2)} |\Fdyad_m\ck(y+i2^{-|m|})| \leq C \|f\|_\infty 2^{-(N-1)|m|} |x|^{-N}.\]
Moreover, by the Paley-Wiener theorem combined with the fact that $\hat{\rho}_{m,e}$ is supported in $[-2^{|m|+1},2^{|m|+1}],$
\[|\rho_{m,e}(x + i2^{-|m|})| \leq e^2 \sup_{y\in\R} |\rho_{m,e}(y)|.\]
Thus, for $\|x\| \leq 1,$
\begin{align*}
\|f(A)x\| & \leq \sum_{m \in \Z} \|\rho_m(A)\| \leq \frac{1}{2\pi} \sum_{m \in \Z} \int_{|\arg \lambda| = 2^{-|m|}} |\rho_m(\lambda)| \cdot \| \lambda R(\lambda,A)\| \left| \frac{d\lambda}{\lambda} \right| \\
& = \frac{1}{2\pi} \sum_{m \in \Z} \int_{|\arg\lambda| = 2^{-|m|},\:|\lambda| \in [\frac12,2]} |\rho_m(\lambda)| \cdot \|\lambda R(\lambda,A)\| \left| \frac{d\lambda}{\lambda} \right| \\
& + \frac{1}{2\pi} \sum_{m \in \Z} \int_{|\arg\lambda| = 2^{-|m|},\:|\lambda| \not\in [\frac12,2]} |\rho_m(\lambda)| \cdot \|\lambda R(\lambda,A)\| \left| \frac{d\lambda}{\lambda} \right| \\
& \lesssim \sum_{m \in \Z} \|\rho_{m,e}\|_{L^\infty(\R)} \cdot 2^{|m|\alpha} + \sum_{m \in \Z} \int_{|x| \geq \log(2)} 2^{-(N-1)|m|} |x|^{-N} 2^{|m|\alpha} dx \|f\|_{\infty} \\
& \lesssim \|f\|_{\Ma} + \|f\|_\infty \cong \|f\|_{\Ma},
\end{align*}
where we have used in the penultimate line that $N-1 > \alpha$ and $N > 1.$
If $g$ is a function in $\Ma_1$ with support in $[\frac12 t,2t],$ put $f(x) = g(tx)$ so that $\supp f \subset [\frac12,2].$
Let $\rho_m$ be as above.
Then
\begin{align*}
\|g(A)x\| & \leq \sum_{m\in\Z} \|\rho_m(t^{-1}A)\| \leq \frac{1}{2\pi} \sum_{m\in\Z} \int_{|\arg \lambda| = 2^{-|m|} } |\rho_m(\lambda)| \cdot \|t \lambda R(t \lambda,A)\| \left|\frac{d \lambda}{\lambda}\right| \\
& \lesssim \|f\|_{\Ma} = \|g\|_{\Ma}.
\end{align*}
For a general $f \in \Ma_1,$ we have by the above 
\[ \|f(A)\| \leq \sum_{n \in \Z} \| (f\ddyad_n)(A) \| \lesssim \sum_{n \in \Z} \|f\ddyad_n\|_{\Ma} = \|f\|_{\Ma_1}.\]
Now the first claim of the proposition follows.
For the second claim, let $f_1,\ldots,f_N \in \Ma_1$ with supports in an interval of the form $[s_k/2,2_k]$ and $\|f_k\|_{\Ma} \leq 1.$
Put $\rho_{m,e}^k = f_{k,e}(\cdot - t_k) \ast \Fdyad_m\ck$ with $t_k$ such that $\supp f_{k,e}(\cdot-t_k) \subset [-\log(2),\log(2)].$
Then
\[
f_k(A) = \sum_{m \in \Z} \rho_m^k(e^{t_k} A) = \frac{1}{2\pi} \sum_{m \in \Z} \int_{|\arg \lambda| = 2^{-|m|}} \rho_m^k(\lambda) e^{-t_k} \lambda R(e^{-t_k} \lambda,A) \frac{d\lambda}{\lambda}.
\]
The claim follows now as the first claim, with the $R$-bound integral lemma \cite[2.14 Corollary]{KuWe} in place of norm estimates of $\lambda R(\lambda,A).$
\end{proof}

\begin{rem}\label{rem N_T}
If the $0$-sectorial operator $A$ satisfies the norm estimate
\[ \left\{ \left(\frac{|\Re z|}{|z|}\right)^\beta e^{-zA} :\: \Re z > 0 \right\} \text{ is bounded} \]
(or $R$-bounded) then $A$ has a ($R$-bounded) $\Mih^{\beta + 1}_1$ calculus.
Indeed the assumption of Proposition \ref{Prop Resolvent Ma0} is satisfied with $\alpha = \beta + 1$ since for $z = r e^{-i\theta}$ and $s > 0$
\[ (e^{i\theta s} - zA)^{-1} = \int_0^\infty \exp(-e^{i\theta} st - tz A) dt, \]
or $e^{i\theta} s (s e^{i 2 \theta} + A)^{-1} = \int_0^\infty s \exp(-e^{i\theta} st - e^{-i\theta} t A) dt$ with $\|e^{-zA}\| \leq C (\cos \theta)^{-\beta},\: \|s\exp(e^{-i\theta} s (\cdot))\|_{L^1(\R_+)} \cong \cos \theta.$
\end{rem}

As for the $\HI$ calculus, there is an extended $\Ma_1$ calculus as a counterpart of \eqref{Equ Extended HI calculus}.

\begin{defi}\label{Def Maloc calculus}
Let $A$ be a $0$-sectorial operator having a bounded $\Ma_1$ calculus and $f : (0,\infty) \to \C$ such that $\|f\ddyad_n\|_{\Ma} \leq C 2^{|n|M}$ holds for $n \in \Z$ and some $C > 0$ and $M \in \N.$
Equivalently said, there exists an $N\in\N$ such that $\rho^N f \in \Ma_1$ for some $N \in \N,$ where $\rho(\lambda) = \lambda ( 1 + \lambda)^{-2}.$
Call this class of functions $\Ma_{\loc}.$
For example, the functions $t \mapsto t^\theta$ belong to $\Ma_{\loc}$ for any $\alpha > 0$ and any $\theta \in \R.$
Note that $\rho^{-N}(A)$ is a closed and densely defined operator.
Then we define the operator $f(A)$ by
\begin{align*}
D(f(A)) & = \{ x \in X:\: (\rho^Nf)(A) \in D(\rho^{-N}(A)) \} \\
f(A) & = \rho^{-N}(A) (\rho^Nf)(A).
\end{align*}
\end{defi}

Analogously to the extended $\HI$ calculus we have the following properties.

\begin{prop}\label{Prop Soaloc calculus}
Let $A$ and $f$ be as in the above definition.
\begin{enumerate}
\item $f(A)$ is a well-defined closed operator on $X$ (independent of the choice of $N$).
\item If $x \in D(A^N) \cap R(A^N),$ then $x \in D(f(A))$ and $f(A)x = (\rho^N f)(A) \rho^{-N}(A)x.$
\item $D(A^N) \cap R(A^N)$ is a core for $f(A).$ In particular, $f(A)$ is densely defined.
\item $D(f(A)) = \{x \in X :\: \lim_{n \to \infty} (\rho^N_nf)(A)x \text{ exists in }X\}$ and for $x \in D(f(A)),$ we have $f(A)x = \lim_{n \to \infty} (\rho^N_nf)(A)x.$
Here, $\rho_n(\lambda) = \frac{n}{n+\lambda} - \frac{1}{1+n \lambda}.$
\item If $g$ is a further function in $\Ma_{\loc},$ then for $u,v \in \C,$ $D(f(A)) \cap D(g(A)) \subseteq D((uf+vg)(A))$ and $(uf + vg)(A) x = uf(A) x + vg(A) x$ for $x \in D(f(A)) \cap D(g(A)).$
\item If $f,g$ are as before and $g$ satisfies $\rho^N g \in \Ma_1,$ then $D(A^{2N}) \cap R(A^{2N}) \subset D(f(A)g(A)) = D((fg)(A)) \cap D(g(A))$ and $(fg)(A)x = f(A) [g(A)x]$ for $x \in D(f(A)g(A)).$
\item If $f$ belongs to $\Hol(\Sigma_\omega)$ for some $\omega \in (0,\pi),$ then the above definition of $f(A)$ coincides with the definition from Lemma \ref{Lem Hol}.
\item If $A$ has in addition a bounded $\Ma$ calculus, then the above definition of $f(A)$ coincides with the one from Definition \ref{Def Line calculi} provided $f$ belongs to $\Ma.$
\end{enumerate}
\end{prop}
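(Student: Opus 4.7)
The plan is to parallel the construction of the extended $\HI$-calculus from Lemma \ref{Lem Hol}, with $\rho(\lambda) = \lambda(1+\lambda)^{-2}$ playing the role of a universal regulariser. Three properties of $\rho$ drive the whole argument: (i) $\rho \in \HI_0(\Sigma_\omega) \subset \Ma_1$, so $\rho^N(A)$ is bounded; (ii) in the extended $\Hol$-calculus $\rho^{-N}(A)$ is a closed, injective, densely defined operator with $D(\rho^{-N}(A)) = D(A^N) \cap R(A^N)$, and $\rho^N(A)\rho^{-N}(A) \subset I \subset \rho^{-N}(A)\rho^N(A)$; (iii) the McIntosh approximants $\rho_n$ lie in $\HI_0(\Sigma_\omega)$ with $\rho_n(A) \to I$ strongly, and admit a factorisation $\rho_n^N = \rho^N g_n$ with $g_n \in \HI(\Sigma_\omega)$, so that $\rho_n^N(A) X \subset D(A^N) \cap R(A^N)$.

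For (1), if both $\rho^N f$ and $\rho^{N+1} f = \rho \cdot \rho^N f$ belong to $\Ma_1$, multiplicativity of the $\Ma_1$-calculus gives $(\rho^{N+1}f)(A) = \rho(A)(\rho^N f)(A)$, and the $\Hol$-calculus identity $\rho^{-(N+1)}(A)\rho(A) = \rho^{-N}(A)$ on $D(\rho^{-N}(A))$ matches the two candidate definitions of $f(A)$; iteration handles any $M \geq N$. Closedness is automatic since $f(A)$ is the composition of a bounded operator with a closed one. For (2), the bounded multiplier $(\rho^N f)(A)$ commutes with $\rho^{-N}(A)$ on $D(\rho^{-N}(A))$: this is first checked against $\HI_0$-approximants of $\rho^N f$, where the extended $\Hol$-calculus yields it directly, and then transferred to all of $\Ma_1$ using the density from Lemma \ref{Lem HI0 dense in l1Ma}. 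Consequently, for $x \in D(A^N)\cap R(A^N)$ the vector $(\rho^N f)(A) x$ lies in $D(\rho^{-N}(A))$, so $x \in D(f(A))$ and $f(A) x = (\rho^N f)(A) \rho^{-N}(A) x$.

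For (3), take $x \in D(f(A))$ and set $x_n = \rho_n^N(A) x \in D(A^N) \cap R(A^N)$; then $x_n \to x$ by the strong convergence of $\rho_n^N(A)$, while (2) and the commutativity of the bounded operators $\rho_n^N(A)$ and $(\rho^N f)(A)$ give $f(A) x_n = \rho_n^N(A) f(A) x \to f(A) x$. Statement (4) is the matching approximation identity: since $\rho^N \cdot \rho_n^N f = \rho_n^N \cdot \rho^N f \in \Ma_1$, one has $\rho_n^N f \in \Ma_{\loc}$, and $(\rho_n^N f)(A) x = f(A)\rho_n^N(A) x$ for every $x \in X$ by (2) applied to $\rho_n^N(A)x$. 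Hence, by closedness of $f(A)$ and $\rho_n^N(A)x \to x$, the existence of $\lim_n (\rho_n^N f)(A)x$ is equivalent to $x \in D(f(A))$ with limit $f(A)x$.

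Statements (5) and (6) reduce to linearity and multiplicativity of the bounded $\Ma_1$-calculus after clearing the $\rho^{-N}$-denominators, together with the commutativity from (2); the stronger hypothesis $x \in D(A^{2N}) \cap R(A^{2N})$ in (6) is what allows both $\rho^{-N}(A)$-factors to be absorbed. For (7) and (8), both candidate calculi assign $\rho^{-N}(A)(\rho^N f)(A)$ to $f$, using the \emph{same} bounded operator $(\rho^N f)(A)$ (determined either by the $\HI$-calculus in the sense of \eqref{Equ Cauchy Integral Formula} or by the $\Ma$-calculus of Definition \ref{Def Line calculi}), so the resulting closed operators coincide; the limit characterisation in (4) gives uniqueness. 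The main technical obstacle is the commutation step underlying (2) and (6): because $\rho^{-N}(A)$ is unbounded, each instance of $\rho^{-N}(A)(\rho^N f)(A) = (\rho^N f)(A)\rho^{-N}(A)$ must be justified on exactly the right domain by reducing, via Lemma \ref{Lem HI0 dense in l1Ma}, to commutation with $\HI_0$-functions where the extended $\Hol$-calculus supplies the identity immediately.
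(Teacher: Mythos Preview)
Your proposal is correct and follows essentially the same route as the paper, which itself defers (1)--(6) to \cite[15.8 Theorem, 15.9 Proposition]{KuWe} and only spells out (7) and (8); your sketch of the regulariser/approximant mechanism is precisely what that reference does. One small point on (8): you assert that both calculi ``assign $\rho^{-N}(A)(\rho^N f)(A)$ to $f$'', but for the $\Ma$ calculus this is a conclusion, not a definition, and it hinges on knowing that the bounded operator $(\rho^N f)(A)$ computed via Definition \ref{Def ell1Ma calculus} agrees with the one computed via Definition \ref{Def Line calculi}---the paper handles this by approximating $\rho_n^N f$ simultaneously in $\Ma_1$ and $\Ma$ by $\HI_0$-functions, then uses (4) and the multiplicativity $(\rho_n^N f)(A) = \rho_n^N(A) S$ to conclude $D(T) = X$; your argument via $\rho^N(A)S = (\rho^N f)(A)$ and injectivity of $\rho^N(A)$ achieves the same end once that consistency is checked.
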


\begin{proof}
(1) - (4) can be proved in the same way as \cite[15.8 Theorem]{KuWe} and (5)-(6) can be proved as \cite[15.9 Proposition]{KuWe}.
For (7), observe that $\rho^N f$ belongs to $\HI_0(\Sigma_\omega),$ and the definition from Lemma \ref{Lem Hol} is the same as the one from Definition \ref{Def Maloc calculus}, see \cite[15.7 Definition]{KuWe}.
For (8), let $T$ be the operator $f(A)$ from Definition \ref{Def Maloc calculus} above and $S$ the operator $f(A)$ from Definition \ref{Def Line calculi}.
Observe that the definition of $(\rho^N_n f)(A)$ from Definitions  \ref{Def Line calculi} and \ref{Def ell1Ma calculus} are the same, since $\rho^N_n f$ can be approximated by a sequence $(h_n)_n \subseteq \HI_0(\Sigma_\omega)$ simultaneously in $\Ma_1$ and in $\Ma.$
Then by the multiplicativity of the $\Ma$ calculus, for $x \in D(T),$ $T x = \lim_n (\rho^N_n f)(A)x =\lim_m \rho^N_n(A) Sx = Sx,$
since $\rho^N_n(A)x \to x$ for any $x \in X,\,N\in \N$ and $n \to \infty.$
Thus, $D(T) = X$ and $T = S.$
\end{proof}

\section{Triebel-Lizorkin type decompositions}\label{Sec 3 Spectral Decomposition}

In this section we establish the main Theorem \ref{Thm PL Decomposition} in several variants.
Our starting point is an extension of the classical Paley-Littlewood decomposition
\[ \|x\|_{L^p(\R^n)} \cong \bignorm{ \left( \sum_{n \in \Z} |\ddyad_n(-\Delta) x|^2 \right)^{\frac12} }_{L^p(\R^n)} \]
to $0$-sectorial operators with a $\Ma$-calculus.

\begin{thm}\label{Thm PL Decomposition}
Let $A$ be a $0$-sectorial operator having a $\Ma$ calculus for some $\alpha > 0.$
Let further $(\ddyad_n)_{n \in \Z}$ be a homogeneous dyadic partition of unity on $\R_+$ and $(\dyad_n)_{n \in \N_0}$ an inhomogeneous dyadic partition of unity on $\R_+.$
The norm on $X$ has the equivalent descriptions:
\begin{align}
\|x\| & \cong \E \bignorm{ \sum_{n \in \Z} \epsilon_n  \ddyad_n(A) x }_{X}
 \cong \sup\left\{ \bignorm{ \sum_{n \in \Z} a_n \ddyad_n(A) x } :\: |a_n| \leq 1 \right\} \label{Equ Prop PL Decomposition} \\
\intertext{and}
\|x\| & \cong \E \bignorm{ \sum_{n \in \N_0} \epsilon_n \dyad_n(A) x }_{X} \cong \sup \left\{ \bignorm{ \sum_{n \in \N_0} a_n \dyad_n(A) x } :\: |a_n| \leq 1 \right\}.
\label{Equ Prop PL Decomposition 2}
\end{align}
In particular the series $\sum_{n \in \Z} \ddyad_n(A)x$ converges unconditionally in $X$ for all $x \in X.$
\end{thm}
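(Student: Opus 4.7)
The plan is to chain together the inequalities $\|x\|_R \leq \|x\|_S \leq C\|x\|$, $\|x\| \leq \|x\|_S$, and $\|x\| \leq C\|x\|_R$, where I write $\|x\|_R = \E\bignorm{\sum_n\epsilon_n\ddyad_n(A)x}$ and $\|x\|_S = \sup\{\bignorm{\sum_n a_n\ddyad_n(A)x}: |a_n|\leq 1\}$. The first inequality is immediate from the definitions; together the four yield the equivalences \eqref{Equ Prop PL Decomposition}.

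The cornerstone is an \emph{overlapping estimate}: for any finite $F\subset\Z$ and scalars $(a_n)$ with $|a_n|\leq 1$, $\bignorm{\sum_{n\in F} a_n\ddyad_n}_{\Ma} \leq C$ independently of $F$ and $(a_n)$. Indeed, after the change of variable $\log$, $(\sum_n a_n\ddyad_n)_e = \sum_n a_n\ddyad_{0,e}(\cdot - n\log 2)$ is a $3$-fold overlapping sum of translates of the single fixed bump $\ddyad_{0,e}$, so splitting $\Z$ into three arithmetic progressions of step $3$ produces three sums of translates with pairwise disjoint supports, each of $\Bes^\beta_{\infty,\infty}$-norm comparable to $\sup|a_n|\|\ddyad_{0,e}\|_{\Bes^\beta_{\infty,\infty}}$; by Proposition \ref{Prop Elementary Mih Hor} this dominates the $\Ma$ norm. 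The $\Ma$ calculus of $A$ then delivers $\bignorm{(\sum_{n\in F} a_n\ddyad_n)(A)} \leq C$; specialising $a_n = \epsilon_n$ and averaging, respectively taking the supremum, gives $\|x\|_R, \|x\|_S \leq C\|x\|$. The inequality $\|x\| \leq \|x\|_S$ follows by choosing $a_n = \mathbf{1}_{|n|\leq N}$ and letting $N\to\infty$, since the partial sums converge to $x$ by Proposition \ref{Prop Convergence Bes}.

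The crucial step is $\|x\| \leq C\|x\|_R$, which I would handle by duality. Fix $x^*\in X^*$ with $\|x^*\|=1$ nearly norming $x$. Using $\tddyad_n\ddyad_n = \ddyad_n$ from \eqref{Equ Partition of unity} and $\E[\epsilon_n\epsilon_m] = \delta_{nm}$,
\[
\Bigl\langle\sum_{|n|\leq N}\ddyad_n(A)x,\,x^*\Bigr\rangle = \sum_{|n|\leq N}\bigl\langle\ddyad_n(A)x,\,\tddyad_n(A)^* x^*\bigr\rangle = \E_\epsilon\Bigl\langle\sum_{|n|\leq N}\epsilon_n\ddyad_n(A)x,\,\sum_{|n|\leq N}\epsilon_n\tddyad_n(A)^* x^*\Bigr\rangle.
\]
Pointwise $|\langle\cdot,\cdot\rangle|\leq \|\cdot\|\|\cdot\|$, followed by Hölder in $\omega$ and Kahane's inequality in both factors, bounds this by $C\,\E\bignorm{\sum_{|n|\leq N}\epsilon_n\ddyad_n(A)x}\cdot\E\bignorm{\sum_{|n|\leq N}\epsilon_n\tddyad_n(A)^* x^*}$. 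The second factor equals $\E\bignorm{(\sum_{|n|\leq N}\epsilon_n\tddyad_n)(A)^* x^*} \leq C\|x^*\|$, because the operator norm of an adjoint equals that of the original and the overlapping estimate applies equally to $\tddyad_n$. Letting $N\to\infty$ returns $\langle x,x^*\rangle$ on the left and proves $\|x\|\leq C\|x\|_R$. The inhomogeneous decomposition \eqref{Equ Prop PL Decomposition 2} is proved along the same lines, with the low-frequency lump $\dyad_0$ absorbed into the same overlapping argument. The main technical obstacle will be the overlapping estimate itself; once it is in hand the duality step is essentially formal and, pleasantly, requires no hypothesis on $A^*$ since only finite-sum adjoints appear and are automatically bounded by the norm bounds on $X$.
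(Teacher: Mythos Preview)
Your proposal is correct and follows essentially the same route as the paper's proof: a uniform $\Bes^\beta_{\infty,\infty}$ bound on $\sum_n a_n \ddyad_{n,e}$ (the paper isolates this as Lemma~\ref{Lem Overlapping}, you obtain it by splitting into three arithmetic progressions with disjoint supports --- the same difference-characterization idea underlies both), combined with Proposition~\ref{Prop Convergence Bes} for convergence and the identical duality argument via $\widetilde{\ddyad}_n$ and Rademacher orthogonality for the lower bound.
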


A key observation for the proof of the above theorem is the following lemma.

\begin{lem}\label{Lem Overlapping}
Let $\beta > 0$ and $(g_n)_n$ be a bounded sequence in $\Bbii$ such that for some $a > 0$ and $N \in \N,$ the supports satisfy
\[ \text{card}\{ n :\: \supp g_n \cap [x-a,x+a] \neq \emptyset \} \leq N \text{ for all }x \in \R. \]
Then $\sum_{n = 1}^\infty g_n(x)$ has finitely many terms for every $x \in \R,$ belongs to $\Bbii$ and
\[ \bignorm{ \sum_{n = 1}^\infty g_n }_{\Bbii} \lesssim_{N,a} \sup_{n \in \N} \| g_n \|_{\Bbii} .\]
\end{lem}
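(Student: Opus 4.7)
The plan is to use the difference-based characterization of $\Bbii$ recalled in \eqref{Equ Bbii norm}, namely
\[ \|g\|_{\Bbii} \cong \|g\|_{L^\infty(\R)} + \sup_{x,t \in \R,\,t \neq 0} |t|^{-\beta} |\Delta_t^M g(x)|, \]
for some fixed integer $M > \beta$, and then exploit the hypothesis that only boundedly many $g_n$ are nonzero near any given point. First I would establish the pointwise finiteness and the $L^\infty$ estimate: for each $x \in \R$, applying the bounded overlap condition to $[x-a,x+a]$ shows that $\{n:\, g_n(x) \neq 0\} \subseteq \{n:\, \supp g_n \cap [x-a,x+a] \neq \emptyset\}$ has at most $N$ elements, so $\sum_n g_n(x)$ is a finite sum and $\|\sum_n g_n\|_\infty \leq N \sup_n \|g_n\|_\infty$.

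Next I would bound $|t|^{-\beta}|\Delta_t^M \sum_n g_n(x)|$ by splitting according to the size of $|t|$. For small $|t|$, specifically $|t| \leq 2a/M$, the $M+1$ points $x, x+t, \ldots, x+Mt$ all lie in an interval of length at most $2a$ centered at $x + Mt/2$. Hence, by the overlap hypothesis, at most $N$ indices $n$ contribute a nonzero $\Delta_t^M g_n(x)$, and
\[ |\Delta_t^M \textstyle\sum_n g_n(x)| \leq N \sup_n |\Delta_t^M g_n(x)| \leq N |t|^\beta \sup_n \|g_n\|_{\Bbii}. \]
For large $|t|$, namely $|t| > 2a/M$, the trivial estimate $|\Delta_t^M h(x)| \leq 2^M \|h\|_\infty$ combined with the $L^\infty$ bound from the previous step gives
\[ |t|^{-\beta} |\Delta_t^M \textstyle\sum_n g_n(x)| \leq (2a/M)^{-\beta} 2^M N \sup_n \|g_n\|_\infty \lesssim_{N,a,\beta,M} \sup_n \|g_n\|_{\Bbii}. \]

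Combining both regimes and adding the $L^\infty$ estimate yields the claimed inequality with a constant depending only on $N$, $a$, $\beta$, and $M$ (and $M$ is a fixed choice depending on $\beta$). The main (mild) obstacle is purely combinatorial: verifying that the $M+1$ sample points $x+kt$ fit simultaneously into a window of length $2a$ so that the uniform bound on overlapping supports actually applies, which forces the threshold $|t| \leq 2a/M$. Everything else then reduces to applying the difference norm to each $g_n$ separately on the small-$t$ side and to the already-established uniform bound on the large-$t$ side.
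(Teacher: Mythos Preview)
Your proof is correct and follows essentially the same route as the paper: both use the difference characterization \eqref{Equ Bbii norm} of $\Bbii$, observe that $\Delta_t^M g(x)$ only samples the $M+1$ points $x,\,x+t,\ldots,x+Mt$, and then invoke the bounded overlap hypothesis to reduce the sum to at most $N$ terms for small $|t|$. The only cosmetic difference is that the paper restricts to $|h|\leq a/M$ and relies implicitly on the well-known fact that in \eqref{Equ Bbii norm} one may take the supremum over $|t|\in(0,\delta)$ for any fixed $\delta>0$ (the large-$|t|$ contribution being absorbed by $\|g\|_\infty$), whereas you make this explicit by treating the regime $|t|>2a/M$ separately via the trivial bound $|\Delta_t^M h|\leq 2^M\|h\|_\infty$.
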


\begin{proof}
Recall the description \eqref{Equ Bbii norm} of the $\Bbii$ norm.
Note that $\triangle_h^M g(x)$ depends only on $g(x),g(x+1\cdot h),\ldots,g(x+Mh).$
Thus, by assumption of the lemma, for $|h| \leq \frac{a}{M},$ there exist $g_{j_1},\ldots,g_{j_N}$ such that
\[|\triangle_h^M (\sum_n g_{n})(x)| = |\triangle_h^M (\sum_{j=1}^N g_{n_j}) (x)|
= |\sum_{j=1}^N \triangle_h^M g_{n_j} (x)| \leq N \sup_n \|\triangle_h^M g_{n}\|_\infty.\]
Hence
\[\sup_{|h| \in (0,\delta)} |h|^{-\beta} \| \Delta_h^M \sum_n g_n\|_\infty \leq N \sup_{|h| \in (0,\delta)} \sup_n |h|^{-\beta} \| \Delta_h^M g_n \|_\infty.\]
Similarly, $\sup_{x \in \R}|\sum_n g_n(x)| \leq N \sup_n \|g_n\|_\infty,$ so that by \eqref{Equ Bbii norm},
\[\|\sum_n g_n\|_{\Bes^\beta_{\infty,\infty}}
\lesssim \sup_n\|g_n\|_\infty + \sup_n \sup_{|h| \in (0,\delta)} |h|^{-\beta} \|\Delta^M_h g_n\|_\infty
\cong \sup_n \|g_n\|_{\Bes^\beta_{\infty,\infty}}.\]
\end{proof}

\begin{proof}[Proof of Theorem \ref{Thm PL Decomposition}]
Choose some $\beta > \alpha.$
The idea of the proof is the following observation.
Assume that $g_n$ is a sequence as in Lemma \ref{Lem Overlapping}.
Then for any choice of signs $a_n = \pm 1,$ we have
\begin{equation}\label{Equ Paley-Littlewood idea}
\bignorm{ \sum_n a_n g_n\circ\log(A) x }_X \lesssim \sup_n |a_n| \, \|g_n\|_{\Bbii} \|x\| \lesssim \sup_n \|g_n\|_{\Bes^\beta_{\infty,\infty}} \|x\|.
\end{equation}
Let us now give the details of the proof.
Let $(a_n)_n$ be a sequence such that $|a_n| \leq 1.$
Then $g_n = a_n \ddyad_{n,e} \in C^\infty_c \subset \Bes^\beta_{\infty,\infty}$ satisfies the assumptions of Lemma \ref{Lem Overlapping}
so that $\ddyad_{(a_n)} := \sum_{n \in \Z} a_n \ddyad_{n,e} \in \Bes^\beta_{\infty,\infty}$ and
\[\|\ddyad_{(a_n)}\|_{\Bes^\beta_{\infty,\infty}} \lesssim \sup_n \| a_n \ddyad_{n,e}\|_{\Bes^\beta_{\infty,\infty}}
\leq \sup_n \|\ddyad_{n,e}\|_{\Bes^\beta_{\infty,\infty}} = \|\ddyad_{0,e}\|_{\Bes^\beta_{\infty,\infty}}.\]
By the same argument, also the partial sums $\sum_{n=-N}^M a_n \ddyad_{n,e}$ are bounded in $\Bes^\beta_{\infty,\infty},$ so that by
Proposition \ref{Prop Convergence Bes}, $\sum_{n =-N}^M a_n\ddyad_n(A)x$ converges to $\ddyad_{(a_n)}\circ\log(A)x$ as $N,M \to \infty.$
Thus,
\[ \|\sum_{n \in \Z} a_n \ddyad_n(A) x\| = \|\ddyad_{(a_n)}\circ\log(A) x\| \lesssim \|\ddyad_{(a_n)}\|_{\Bes^\beta_{\infty,\infty}} \|x\|
\lesssim \|\ddyad_{0,e}\|_{\Bes^\beta_{\infty,\infty}} \|x\|.\]
Since $|\epsilon_n(\omega)| = 1$ for any $n \in \Z$ and $\omega \in \Omega_0,$ the estimate
\[\E\|\sum_{n \in \Z} \epsilon_n \ddyad_n(A) x \|_{X} \leq \sup \left\{ \| \sum_{n \in \Z} a_n \ddyad_n(A) x \|  :\: |a_n| \leq 1 \right\}\]
is clear.
The converse inequality follows by duality, writing
$|\spr{x}{x'}| = |\sum_{n} \spr{\ddyad_n(A) x}{x'}| = |\E  \spr{\sum_{n} \epsilon_n \ddyad_n(A)x}{\sum_k \epsilon_k \widetilde\ddyad_k(A)'x'}|
\lesssim \E \| \sum_n \epsilon_n  \ddyad_n(A)x\|_{X} \E \| \sum_k \epsilon_k \widetilde\ddyad_k(A)'x'\|_{X'}$
and estimating the dual expression.
We have shown \eqref{Equ Prop PL Decomposition}.
The equivalence \eqref{Equ Prop PL Decomposition 2} can be proved similarly.
\end{proof}

\begin{rem}\label{Rem not injective}
If $A$ is not injective or $R(A)$ is not dense on a reflexive space $X$ (e.g. if $A$ is the Laplace Beltrami operator on $L^p(M),$ where $M$ is a compact Riemannian manifold), then we use the decomposition \eqref{Equ sectorial injective}, $X = \overline{R(A)} \oplus N(A)$ with $Px = \lim_{\lambda \to 0} \lambda R(\lambda,A)x$ the projection onto $N(A)$ and the part $A_1$ of $A$ on $\overline{R(A)}$ to obtain a modification of \eqref{Equ Prop PL Decomposition}
\[ \|x\| \cong \|Px\| + \E \left\| \sum_{n \in \Z} \epsilon_n \ddyad_n(A_1) x \right\| \]
and analogously for \eqref{Equ Prop PL Decomposition 2}.
If $A$ is a Hilbert space and one takes $\dyad_n(A)$ as defined by the functional calculus of self-adjoint operators then $\dyad_n(A)|_{N(A)} = P$ and \eqref{Equ Prop PL Decomposition 2} holds as it stands.
\end{rem}

The norm equivalence for $\| \Id_X(x) \|$ in \eqref{Equ Prop PL Decomposition} of Theorem \ref{Thm PL Decomposition} can be extended to possibly unbounded operators $g(A),$
if $g_2$ does not vary too much on intervals $[n-1,n+1].$
Recall that $g_2(t) = g(2^t).$
This is the content of the next proposition and will be a tool to consider fractional domain spaces of sectorial operators later on in this section.

\begin{prop}\label{Prop PL extended}
Let $A$ be a $0$-sectorial operator having a $\Ma$ calculus.
Assume that $g : \R_+ \to \C$ is invertible and such that $g_2 \in \Baloc$ and $g^{-1}_2$ also belongs to $\Baloc.$
Assume that for some $\beta > \alpha,$
\begin{equation}\label{Equ Prop PL extended}
\sup_{n \in \Z} \| (\widetilde\ddyad_n g)_2 \|_{\Bbii} \cdot \| (\ddyad_n g^{-1})_2  \|_{\Bbii} < \infty.
\end{equation}
Let $(c_n)_{n \in \Z}$ be a sequence in $\C \backslash \{ 0 \}$ satisfying $|c_n| \cong \| (\widetilde\ddyad_n g)_2 \|_{\Bbii}.$
Then for any $x \in D(g(A)),$ $\sum_{n \in \Z} c_n \ddyad_n(A) x$ converges unconditionally in $X$ and
\begin{equation}\label{Equ 2 Prop PL extended}
\| g(A)x \| \cong \E \bignorm{ \sum_{n \in \Z} \epsilon_n c_n \ddyad_n(A) x }
\cong \sup \left\{ \bignorm{ \sum_{n \in \Z} a_n c_n \ddyad_n(A) x }_X :\: |a_n| \leq 1 \right\}.
\end{equation}
\end{prop}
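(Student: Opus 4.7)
The plan is to combine Theorem \ref{Thm PL Decomposition} (applied to $y = g(A) x$) with the factorisations $\ddyad_n = (\ddyad_n g^{-1}) \cdot g$ and $\ddyad_n g = (\widetilde\ddyad_n g) \cdot \ddyad_n$, and then use Lemma \ref{Lem Overlapping} to convert the two factors in the hypothesis \eqref{Equ Prop PL extended} into uniform $\Bbii$-bounds for the corresponding multiplier functions. The embedding $\Bbii \hookrightarrow \Ba$ from Proposition \ref{Prop Elementary Mih Hor} then turns such $\Bbii$-bounds into operator bounds via the $\Ma$ calculus of $A$.

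\emph{Upper bound.} For $x \in D(g(A))$ the extended calculus (Proposition \ref{Prop Soaloc calculus}) gives $\ddyad_n(A) x = (\ddyad_n g^{-1})(A)\, g(A) x$, so for $|a_n| \leq 1$
\[
\sum_n a_n c_n \ddyad_n(A) x \;=\; F_a(A)\, g(A) x, \qquad F_a := \sum_n a_n c_n\,(\ddyad_n g^{-1}).
\]
The supports of $(\ddyad_n g^{-1})_2$ have bounded overlap on $\R$, and \eqref{Equ Prop PL extended} yields $|a_n c_n|\,\|(\ddyad_n g^{-1})_2\|_{\Bbii} \lesssim 1$ for each $n$. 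Lemma \ref{Lem Overlapping} therefore gives $\|F_{a,2}\|_{\Bbii} \lesssim 1$ with a constant independent of $(a_n)$, and Proposition \ref{Prop Convergence Bes} combined with the $\Ma$ calculus yields unconditional convergence of $\sum_n a_n c_n \ddyad_n(A) x$ in $X$ together with the bound $\|\sum_n a_n c_n \ddyad_n(A) x\| \lesssim \|g(A) x\|$.

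\emph{Lower bound.} For this step I use the duality argument from the end of the proof of Theorem \ref{Thm PL Decomposition}. Set $h_n := c_n^{-1} \widetilde\ddyad_n g$; the hypothesis \eqref{Equ Prop PL extended} gives $\sup_n \|h_{n,2}\|_{\Bbii} \lesssim 1$. Using $(\ddyad_n g)(A) = (\widetilde\ddyad_n g)(A)\, \ddyad_n(A)$ and $\sum_n \ddyad_n(A) z = z$, one rewrites, for $y' \in X'$,
\[
\spr{g(A) x}{y'} \;=\; \sum_n c_n\, \spr{\ddyad_n(A) x}{h_n(A)' y'},
\]
and Rademacher randomisation together with Kahane's inequality dominates this by
\[
\E\left\|\sum_n \epsilon_n c_n \ddyad_n(A) x\right\|_X \cdot \E\left\|\sum_n \epsilon_n h_n(A)' y'\right\|_{X'}.
\]
To bound the second factor by $\|y'\|$, partition $\Z$ into finitely many sets $I_1,\ldots,I_J$ on which the supports of the $\widetilde\ddyad_n$ are pairwise disjoint; for each $j$ and each Rademacher realisation, Lemma \ref{Lem Overlapping} gives $\|(\sum_{n \in I_j} \epsilon_n h_n)_2\|_{\Bbii} \lesssim 1$, hence $\|(\sum_{n \in I_j} \epsilon_n h_n)(A)'\|_{B(X')} \lesssim 1$. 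Summing over $j$ and taking expectations yields the required estimate, and together with the trivial inequality $\E\|\cdots\|\leq \sup_{|a_n|\leq 1}\|\cdots\|$ this closes the full chain of equivalences in \eqref{Equ 2 Prop PL extended}.

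\emph{Main obstacle.} The decisive difficulty is the lower bound: the naive identity $\ddyad_n(A) g(A) x = c_n h_n(A)\, \ddyad_n(A) x$, which expresses the terms of $\sum_n \epsilon_n \ddyad_n(A) y$ in terms of the target $c_n \ddyad_n(A) x$, introduces operator-valued coefficients $h_n(A)$ that cannot be compared with the scalars $c_n$ term by term. Passing to the dual side and grouping the indices into finitely many blocks with disjoint supports, so that each block becomes a scalar multiplier to which the $\Ma$ calculus applies directly, is the key device for circumventing this.
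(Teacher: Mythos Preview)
Your proof is correct and follows essentially the same approach as the paper: the upper bound via $\ddyad_n(A)x = (\ddyad_n g^{-1})(A)\,g(A)x$ together with Lemma~\ref{Lem Overlapping} and Proposition~\ref{Prop Convergence Bes} is exactly what the paper does, and for the lower bound the paper simply says ``we argue by duality similar to the proof of Theorem~\ref{Thm PL Decomposition}'', which is precisely the argument you spell out with $h_n = c_n^{-1}\widetilde\ddyad_n g$.

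One minor simplification: the partition of $\Z$ into blocks $I_1,\ldots,I_J$ with pairwise disjoint supports is not needed. Lemma~\ref{Lem Overlapping} already applies to families with \emph{bounded} overlap, and the $(\widetilde\ddyad_n g)_2$ have this property directly (at most five of them meet any given point). Hence $\|(\sum_{n}\epsilon_n h_n)_2\|_{\Bbii}\lesssim \sup_n\|h_{n,2}\|_{\Bbii}\lesssim 1$ in one step, and the $\Ma$ calculus gives $\E\|\sum_n\epsilon_n h_n(A)'y'\|\lesssim\|y'\|$ without splitting into blocks. This is how the paper handles the analogous dual term in the proof of Theorem~\ref{Thm PL Decomposition}.
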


\begin{proof}
Let us show the unconditional convergence of $\sum_n c_n \ddyad_n(A)x$ for $x \in D(g(A)).$
By Proposition \ref{Prop Soaloc calculus} (6), we have $\ddyad_n(A)x = (g^{-1}\ddyad_n)(A) g(A) x.$
Thus for any choice of scalars $|a_n| \leq 1,$
\[\sum_{n = -N}^N a_n c_n \ddyad_n(A)x = \left[\sum_{n=-N}^N a_n c_n (g^{-1}\ddyad_n)\right](A) g(A)x.\]
The term in brackets is a sequence of functions indexed by $N$ which clearly converges pointwise for $N \to \infty.$
$[\ldots]_2$ is also uniformly bounded in $\Bes^\beta_{\infty,\infty},$ because
\begin{equation}\label{Equ 2 Proof Prop PL Baloc}
\|\sum_{-N}^N a_n c_n (g^{-1} \ddyad_{n})_2\|_{\Bes^\beta_{\infty,\infty}} \lesssim \sup_n |c_n| \, \|(g^{-1} \ddyad_{n} )_2\|_{\Bes^\beta_{\infty,\infty}}
\lesssim \sup_n \|(g \widetilde\ddyad_{n})_2\|_{\Bes^\beta_{\infty,\infty}} \, \|(g^{-1} \ddyad_{n})_2 \|_{\Bes^\beta_{\infty,\infty}} < \infty
\end{equation}
by assumption \eqref{Equ Prop PL extended}.
Thus, Proposition \ref{Prop Convergence Bes} yields the unconditional convergence.
Estimate \eqref{Equ 2 Proof Prop PL Baloc} also shows that
\[\| \sum_n a_n c_n \ddyad_n(A) x \| = \| \sum_n a_n c_n (g^{-1}\ddyad_n)(A) g(A)x \| \lesssim \sup_n |c_n| \, \|(g^{-1}\ddyad_{n})_2\|_{\Bes^\beta_{\infty,\infty}}\|g(A)x\|\]
for any choice of scalars $|a_n| \leq 1,$ so that one inequality in \eqref{Equ 2 Prop PL extended} is shown.
For the reverse inequality, we argue by duality similar to the proof of Theorem \ref{Thm PL Decomposition}.
\end{proof}

Proposition \ref{Prop PL extended} can be used to characterize the domains of fractional powers of $A.$

\begin{thm}\label{Thm Fractional power norm}
Let $A$ be a $0$-sectorial operator having a bounded $\Ma$ calculus for some $\alpha > 0.$
Let further $(\ddyad_n)_{n \in \Z}$ be a dyadic partition of unity on $\R_+$ and $(\dyad_n)_{n \in \N_0}$ be the corresponding inhomogeneous partition on $\R_+.$
Then for $\theta \in \R,$
\begin{align}
\| x \|_{\dot{X}_\theta} \cong \sup_{F \subset \Z\text{ finite}} \E \bignorm{\sum_{n \in F} \epsilon_n 2^{n \theta} \ddyad_n(A) x } \quad \text{ (homogeneous decomposition)}\label{Equ hom norm}
\intertext{and for $\theta > 0,$}
\| x \|_{X_\theta} \cong \sup_{F \subset \N_0 \text{ finite}} \E \bignorm{\sum_{n \in F} \epsilon_n 2^{n \theta} \dyad_n(A) x } \quad \text{ (inhomogeneous decomposition)}\label{Equ inhom norm}
\end{align}
\end{thm}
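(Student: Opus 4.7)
The homogeneous statement \eqref{Equ hom norm} will be obtained by a direct application of Proposition~\ref{Prop PL extended} to the weight $g(\lambda)=\lambda^{\theta}$, for which $g(A)=A^{\theta}$ and $\|g(A)x\|=\|x\|_{\dot X_\theta}$ by definition of $\dot X_\theta$. The inhomogeneous statement \eqref{Equ inhom norm} will then be deduced from the homogeneous case together with the basic Paley-Littlewood decomposition \eqref{Equ Prop PL Decomposition 2} and Kahane's contraction principle.

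To verify the hypotheses of Proposition~\ref{Prop PL extended} with $g(\lambda)=\lambda^{\theta}$, note that $g_{2}(t)=2^{\theta t}\in C^{\infty}(\R)\subset\Baloc$, and likewise for $g_{2}^{-1}$. Using $\ddyad_{n}(\lambda)=\ddyad_{0}(2^{-n}\lambda)$ (and similarly for $\widetilde\ddyad_{n}$), one checks that
\[ (\widetilde\ddyad_{n}g)_{2}(t) \;=\; 2^{n\theta}\,u(t-n),\qquad (\ddyad_{n}g^{-1})_{2}(t) \;=\; 2^{-n\theta}\,v(t-n), \]
where $u(s)=\widetilde\ddyad_{0}(2^{s})\,2^{\theta s}$ and $v(s)=\ddyad_{0}(2^{s})\,2^{-\theta s}$ lie in $C^{\infty}_{c}(\R)$ and do not depend on $n$. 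Translation invariance of the $\Bbii$-norm then gives $\|(\widetilde\ddyad_{n}g)_{2}\|_{\Bbii}\cong 2^{n\theta}$ and $\|(\ddyad_{n}g^{-1})_{2}\|_{\Bbii}\cong 2^{-n\theta}$, so \eqref{Equ Prop PL extended} is satisfied for any $\beta>\alpha$ and $c_{n}=2^{n\theta}$ is an admissible weight sequence. Proposition~\ref{Prop PL extended} thus yields, for every $x\in D(A^{\theta})$, the unconditional convergence of $\sum_{n\in\Z}2^{n\theta}\ddyad_{n}(A)x$ together with
\[ \|A^{\theta}x\| \;\cong\; \E\Bigl\|\sum_{n\in\Z}\epsilon_{n}2^{n\theta}\ddyad_{n}(A)x\Bigr\| \;=\; \sup_{F\subset\Z\text{ finite}}\E\Bigl\|\sum_{n\in F}\epsilon_{n}2^{n\theta}\ddyad_{n}(A)x\Bigr\|, \]
the last equality being the standard reformulation of unconditional convergence of Rademacher series. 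Since $D(A^{\theta})$ is dense in $\dot X_\theta$ (both viewed inside the ambient space \eqref{Equ The space Ym}), this proves \eqref{Equ hom norm}.

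For the inhomogeneous statement, use $\|x\|_{X_\theta}\cong\|A^{\theta}x\|+\|x\|$ (from $X_\theta=\dot X_\theta\cap X$ for $\theta>0$) and denote the right-hand side of \eqref{Equ inhom norm} by $S(x)$. The bound $S(x)\lesssim\|A^{\theta}x\|+\|x\|$ follows by splitting the inhomogeneous sum at $n=0$ and $n\geq 1$: the first term is controlled by $\|\dyad_{0}(A)\|\,\|x\|$, while on the tail $\dyad_{n}=\ddyad_{n}$, so the contraction principle together with the homogeneous equivalence just proved gives the bound $\lesssim\|A^{\theta}x\|$. Conversely, contraction in the coefficients (using $|2^{-n\theta}|\leq 1$ for $n\geq 0$ and $\theta>0$) combined with \eqref{Equ Prop PL Decomposition 2} yields $\|x\|\lesssim S(x)$; and splitting the homogeneous Rademacher sum at $n\leq 0$ and $n\geq 1$ controls the first piece by $\|x\|$ (contraction with $|2^{n\theta}|\leq 1$ plus \eqref{Equ Prop PL Decomposition}) and the second piece by $S(x)$ (since $\ddyad_{n}=\dyad_{n}$ for $n\geq 1$, together with another application of contraction). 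The main technical obstacle is the Besov computation of the two norms in Step~1: the clean route is to pass to the logarithmic variable, where the multiplicative dyadic shift $\lambda\mapsto 2^{n}\lambda$ becomes an additive translation and $\lambda^{\theta}$ becomes an exponential, so that all relevant functions are translates of a single compactly supported smooth profile up to the scalar $2^{\pm n\theta}$, and translation invariance of $\Bbii$ does the rest.
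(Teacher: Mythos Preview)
Your proof is correct and, for the homogeneous statement~\eqref{Equ hom norm}, follows essentially the paper's route: apply Proposition~\ref{Prop PL extended} to $g(\lambda)=\lambda^{\theta}$, verify \eqref{Equ Prop PL extended}, and read off $c_n\cong 2^{n\theta}$. Your Besov computation via the change of variable $t\mapsto t-n$ and translation invariance of $\Bbii$ is slightly cleaner than the paper's, which instead sandwiches $\|(\widetilde\ddyad_n g)_2\|_{\Bbii}$ between $C^0_b$ and $C^m_b$ norms via Proposition~\ref{Prop Elementary Mih Hor}, but the content is identical.

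For the inhomogeneous statement~\eqref{Equ inhom norm} you take a different path. The paper invokes Proposition~\ref{Prop PL extended} a second time, now with the weight $g(\lambda)=1+\lambda^{\theta}$, to obtain $\|(1+A^{\theta})x\|\cong\E\|\sum_{n\in\Z}\epsilon_n\max(1,2^{n\theta})\ddyad_n(A)x\|$, and then compares this with the inhomogeneous sum by splitting into $n\leq -1$, $n=0$, and $n\geq 1$ and using $\ddyad_n\dyad_0=\ddyad_n$ for $n\leq -1$. You instead combine the homogeneous equivalence already established with the basic Paley--Littlewood decompositions \eqref{Equ Prop PL Decomposition}, \eqref{Equ Prop PL Decomposition 2} and Kahane's contraction principle (exploiting $|2^{n\theta}|\leq 1$ for $n\leq 0$ and $|2^{-n\theta}|\leq 1$ for $n\geq 0$ when $\theta>0$). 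Your argument is somewhat more elementary in that it avoids a second verification of \eqref{Equ Prop PL extended}; the paper's has the small advantage of giving directly the intermediate description $\|(1+A^{\theta})x\|\cong\E\|\sum_n\epsilon_n\max(1,2^{n\theta})\ddyad_n(A)x\|$, which may be of independent interest.
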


\begin{rem}
Recall that for $x \in D(A^\theta),$ $\|x\|_\theta = \| A^\theta x \|_X.$
These norms reduce for $A = -\Delta$ on $L^p(\R^n)$ to the classical Triebel-Lizorkin space norms. 
Therefore we consider $\dot{X}_\theta$, the completion of $D(A^\theta)$ in this norm, as generalized Triebel-Lizorkin spaces for the operator $A.$

Since our assumption implies that $A$ has bounded imaginary powers, we emphasize that the spaces $X_\theta$ form a complex interpolation scale.
If $X$ does not contain $c_0,$ then the sum on the right hand side of \eqref{Equ inhom norm} converges in $X$ (see \cite{KaW2,vN}).
\end{rem}

\begin{proof}
The operator $\ddyad_n(A) : X \to X$ can be continuously extended to $\dot{X}_\theta \to \dot{X}_\theta,$ and if $\theta \geq 0,$ to $X_\theta \to X_\theta.$
Indeed, by Proposition \ref{Prop Soaloc calculus}, for $x \in D(A^\theta),$
\[ \|\ddyad_n(A) x\|_{\dot{X}_\theta} = \|A^\theta \ddyad_n(A)x\|_X = \|\ddyad_n(A) A^\theta x\|_X \leq \|\ddyad_n(A)\|_{X\to X} \|A^\theta x\|_X,\]
whence $\| \ddyad_n(A) \|_{\dot{X}_\theta \to \dot{X}_\theta} \leq \| \ddyad_n(A) \|$ and also $\| \ddyad_n(A) \|_{X_\theta \to X_\theta} \leq \| \ddyad_n(A) \|.$
One even has $\ddyad_n(A) (\dot{X}_\theta) \subset X.$
Similarly, $\dyad_n(A)$ maps $X_\theta \to X_\theta$ for $\theta \geq 0.$
Then by the density of $D(A^\theta)$ in $\dot{X}_\theta,$ \eqref{Equ hom norm} and \eqref{Equ inhom norm} follow from
\begin{align}
\|A^\theta x\| & \cong \E \bignorm{ \sum_{n \in \Z} \epsilon_n  2^{n\theta} \ddyad_n(A) x}_{X} & (x \in D(A^\theta))
\label{Equ hom norm 2} \\
\intertext{and for $\theta > 0,$}
\|A^\theta x\| + \|x\| & \cong \E \bignorm{ \sum_{n \in \N_0} \epsilon_n 2^{n\theta} \dyad_n(A) x}_{X} & (x \in D(A^\theta)),
\label{Equ inhom norm 2}
\end{align}
which we will show now.
Let $\beta > \alpha$ and
$g(t) = 2^{t\theta}.$
Recall the embedding $C^m_b \hookrightarrow \Bes^\beta_{\infty,\infty} \hookrightarrow C^0_b$ for $m \in \N,\,m > \beta > 0$ from Proposition \ref{Prop Elementary Mih Hor}.
We have for all $n \in \Z$
\[ 2^{n\theta} \leq \|g\widetilde\ddyad_{n,2}\|_{C^0_b}
\lesssim \|g\widetilde\ddyad_{n,2}\|_{\Bes^\beta_{\infty,\infty}}
\lesssim \|g\widetilde\ddyad_{n,2}\|_{C^m_b}
\lesssim 2^{n\theta}
\]
and
$\|g^{-1} \ddyad_{n,2}\|_{\Bes^{\beta}_{\infty,\infty}} \lesssim \|g^{-1} \ddyad_{n,2}\|_{C^m_b} \lesssim 2^{-n\theta}.$
(Here, equivalence constants may depend on $\theta.$)
Consequently, $\sup_{n \in \Z} \| g \widetilde\ddyad_{n,2} \|_{\Bes^\beta_{\infty,\infty}} \| g^{-1} \ddyad_{n,2} \|_{\Bes^\beta_{\infty,\infty}} < \infty.$
By Proposition \ref{Prop PL extended}, we have with $c_n = 2^{n\theta},$
\[ \|A^\theta x \| \cong \E \Bignorm{\sum_{n \in \Z} \epsilon_n 2^{n\theta} \ddyad_n(A)x}\]
for $x \in D(A^\theta),$ so that \eqref{Equ hom norm 2} follows.\\

\noindent
By \cite[Lemma 15.22]{KuWe} (set $A =A^\theta$ and $\alpha = 1$ there),
the left hand side of \eqref{Equ inhom norm 2} satisfies
\begin{equation}\label{Equ 3 Proof Cor fractional}
\|A^\theta x\| + \|x\| \cong \|(1+A^\theta)x\| \quad (x \in D(A^\theta)),
\end{equation}
whereas the right hand side of \eqref{Equ inhom norm 2} is equivalent to
\begin{equation}\label{Equ 2 Proof Cor fractional}
\E \Bignorm{\sum_{n \geq 1} \epsilon_n 2^{n \theta} \dyad_n(A)x} + \|\dyad_0(A)x\|.
\end{equation}

\noindent
``$\lesssim$'' in \eqref{Equ inhom norm 2}:
We use the equivalent expressions from \eqref{Equ 3 Proof Cor fractional} and \eqref{Equ 2 Proof Cor fractional}.
We set $g(t) = 1 + 2^{t\theta}.$
Then one checks similarly to the first part that $\|g\widetilde\ddyad_{n,2}\|_{\Bes^\beta_{\infty,\infty}} \cong \max(1,2^{n\theta})$ for $n \in \Z$ and
that $\sup_{n \in \Z} \|g\widetilde\ddyad_{n,2}\|_{\Bes^\beta_{\infty,\infty}} \|g^{-1}\ddyad_{n,2}\|_{\Bes^\beta_{\infty,\infty}} < \infty.$
Thus, by Proposition \ref{Prop PL extended},
\begin{align}
\|(1+A^\theta)x\| &  \cong \E \Bignorm{ \sum_{n \in \Z} \epsilon_n \max(1,2^{n\theta})\ddyad_n(A)x}\nonumber\\
& \leq \E \Bignorm{ \sum_{n \leq -1} \ldots} + \E \Bignorm{ \sum_{n \geq 1} \ldots } + \|\ddyad_0(A)x\|.\label{Equ Proof Cor fractional}
\end{align}
We estimate the three summands.
We have $\ddyad_n \dyad_0 = \ddyad_n$ for any $n \leq -1,$ so that
\[ \E \bignorm{ \sum_{n \leq -1} \epsilon_n \ddyad_n(A)x } = \E \bignorm{ \sum_{n \leq -1} \epsilon_n \ddyad_n(A) \dyad_0(A)x }
\lesssim \|\dyad_0(A)x\|,
\]
where we use \eqref{Equ Prop PL Decomposition} in the last step.
Since $\ddyad_n = \dyad_n$ for $n \geq 1,$ also the second summand is controlled by \eqref{Equ 2 Proof Cor fractional}.
Finally, we have $\ddyad_0 = \ddyad_0 [ \dyad_0 + \ddyad_1],$ so that
\[\|\ddyad_0(A) x \| \leq \|\dyad_0(A)x\| + \|\ddyad_0(A) \ddyad_1(A) x\| \lesssim \|\dyad_0(A)x\| + \|\ddyad_1(A)x\|,\]
and the last term is controlled by the second summand of \eqref{Equ Proof Cor fractional}.
This shows ``$\lesssim$'' in \eqref{Equ inhom norm 2}.\\

\noindent
``$\gtrsim$'' in \eqref{Equ inhom norm 2}:
We use again the expression in \eqref{Equ 2 Proof Cor fractional}.
By the first part of the theorem,
\[\E \Bignorm{ \sum_{n \geq 1} \epsilon_n 2^{n\theta} \dyad_n(A)x } \leq \E \Bignorm{ \sum_{n \in \Z} \epsilon_n 2^{n\theta} \ddyad_n(A)x} \lesssim \|A^\theta x\|.\]
Finally, $\|\dyad_0(A)x\| \lesssim \|x\|$ because $\dyad_0$ belongs to $\Ma.$
\end{proof}

The next goal is a continuous variant of Theorem \ref{Thm Fractional power norm}.
We have two preparatory lemmas.

\begin{lem}\label{Lem Mihlin calculus and Differentiation}
Let $\beta > \alpha > 0$ and $A$ be a $0$-sectorial operator with bounded $\Ma$ calculus.
\begin{enumerate}
\item
If $g :\: (0,\infty) \to \C$ is such that $g_e \in \Bbii,$ then for any $x\in X,$
$t \mapsto g(tA)x$ is continuous.

\item
If $g :\: (0,\infty) \to \C$ is such that $g_e \in \Bes^{\beta+1}_{\infty,\infty},$ then for any $x \in X$ and $t > 0$
\[ \frac{d}{dt} \left[g(tA)x\right] = A g'(tA)x. \]
\end{enumerate}
\end{lem}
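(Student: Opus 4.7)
The approach for both parts is to apply the Convergence Lemma for the Mihlin calculus, Proposition \ref{Prop Convergence Bes}, to a suitable sequence of multiplier functions; note that $\beta > \alpha$ gives $\Bbii \hookrightarrow \Ba$ by Proposition \ref{Prop Elementary Mih Hor}, so these multipliers indeed lie in $\Ma$. The basic ingredients are the translation invariance $\|g_e(\cdot + c)\|_{\Bbii} = \|g_e\|_{\Bbii}$ for any $c \in \R$, and the embedding $\Bbii \hookrightarrow C^0_b$ so that pointwise values and continuity of $g_e$ make sense.

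For part (1), fix $x \in X$ and $t_0 > 0$ and take $t_n \to t_0$. Set $f_n(s) = g(t_n s)$ so that $f_{n,e}(z) = g_e(z + \log t_n)$. Then $\sup_n \|f_{n,e}\|_{\Bbii} = \|g_e\|_{\Bbii} < \infty$, and $f_{n,e}(z) \to g_e(z + \log t_0)$ pointwise since $g_e$ is continuous. Proposition \ref{Prop Convergence Bes} yields $g(t_n A)x \to g(t_0 A)x$ in $X$.

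For part (2), fix $t > 0$ and a null sequence $h_n \neq 0$, and consider the difference quotient $G_n(s) = h_n^{-1}\bigl[g((t+h_n)s) - g(ts)\bigr]$ with candidate limit $G(s) = sg'(ts)$. Setting $\sigma_n = \log(t+h_n) - \log t$ and using that $g_e \in \Bes^{\beta+1}_{\infty,\infty} \hookrightarrow C^1_b$, the fundamental theorem of calculus gives
\begin{equation*}
G_{n,e}(z) = \frac{\sigma_n}{h_n} \int_0^1 g_e'\bigl(z + \log t + u\sigma_n\bigr)\, du.
\end{equation*}
Since differentiation maps $\Bes^{\beta+1}_{\infty,\infty}$ into $\Bbii$, we have $g_e' \in \Bbii$, so Minkowski's inequality applied to the $\Bbii$ norm together with translation invariance yields $\sup_n \|G_{n,e}\|_{\Bbii} \leq C\|g_e'\|_{\Bbii}$, using that $\sigma_n/h_n \to 1/t$. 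Pointwise convergence $G_{n,e}(z) \to t^{-1} g_e'(z + \log t) = G_e(z)$ follows from continuity of $g_e'$. Proposition \ref{Prop Convergence Bes} then gives $G_n(A)x \to G(A)x$.

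The most delicate step, which I expect to be the main obstacle, is to identify $G(A)$ with $A g'(tA)$. The function $s \mapsto g'(ts)$ on its own lies only in the extended class $\Ma_{\loc}$, since its $e$-transform $t^{-1}e^{-z}g_e'(z + \log t)$ is unbounded as $z \to -\infty$, whereas the product $G(s) = s \cdot g'(ts)$ is in $\Ma$ by the argument above. I would invoke the multiplicativity rule Proposition \ref{Prop Soaloc calculus}(6) applied to the factors $f(s) = s$ and $g'(ts)$ on the dense core $D(A^{2N}) \cap R(A^{2N})$, deducing that $(sg'(ts))(A)x = A[g'(tA)x]$ there, and then extend via the boundedness of $G(A)$ to all $x \in X$, interpreting $Ag'(tA)x$ as the natural bounded extension furnished by the $\Ma$-multiplier $sg'(ts)$.
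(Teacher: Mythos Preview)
Your proof is correct and follows the same strategy as the paper's: apply Proposition \ref{Prop Convergence Bes} to the dilated and difference-quotient multipliers, using translation invariance of the $\Bbii$ norm and the embedding $\Bes^{\beta+1}_{\infty,\infty} \hookrightarrow C^1_b$. The only minor differences are that the paper obtains the uniform $\Bbii$ bound on the difference quotients by citing \cite[Section 2.3, Proposition]{RuSi} directly rather than via your integral representation plus Minkowski, and it treats $Ag'(tA)$ simply as shorthand for the bounded $\Ma$-multiplier $\lambda \mapsto \lambda g'(t\lambda)$, so your careful detour through $\Ma_{\loc}$ and Proposition \ref{Prop Soaloc calculus}(6) is more than the paper actually requires.
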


\begin{proof}
(1) Since $\beta > 0,$ $g$ is a continuous function and thus, for any $t \in \R,$ $g_e(t+h) \to g_e(t)$ as $h \to 0.$
Also $\sup_{h \neq 0} \|g_e(\cdot + h)\|_{\Bes^\beta_{\infty,\infty}} = \|g_e\|_{\Bes^\beta_{\infty,\infty}} < \infty,$
so that we can appeal to Proposition \ref{Prop Convergence Bes} with $f_n = g(\cdot + h_n)$ and $h_n$ a null sequence.\\

\noindent
(2)
Fix some $x \in X$ and $t_0 \in \R.$
As $\Bes^{\beta + 1}_{\infty,\infty} \hookrightarrow C^1_b$ by Proposition \ref{Prop Elementary Mih Hor}, $g_e$ is continuously differentiable.
Hence for any $t \in \R,$ $\lim_{h \to 0} \frac{1}{h}(g_e(t_0 + t + h) - g_e(t_0 + t)) = (g_e)'(t_0 + t).$
Further, $\frac{1}{h} (g_e(t_0 + \cdot + h) - g_e(t_0 + \cdot))$ is uniformly bounded in $\Bes^\beta_{\infty,\infty}$ \cite[Section 2.3, Proposition]{RuSi}.
Then the claim follows at once from Proposition \ref{Prop Convergence Bes}.
\end{proof}

The $\HI$ calculus variant of the following lemma is a well-known result of McIntosh (see \cite{McIn}, \cite[Lemma 9.13]{KuWe}, \cite[Theorem 5.2.6]{Haasa}).

\begin{lem}\label{Lem McIntosh Integral}
Let $A$ be a $0$-sectorial operator having a bounded $\Ma$ calculus and $D_A \subset X$ be its calculus core from \eqref{Equ D}.
Let further $g : (0,\infty) \to \C$ be a function with compact support (not containing $0$) such that $g_e \in \Bes^\beta_{\infty,\infty}$ for some $\beta > \alpha.$
Assume that $\int_0^\infty g(t) \frac{dt}{t} = 1.$
Then for any $x \in D_A,$
\begin{equation}\label{Equ Lem McIntosh Integral}
x = \int_0^\infty g(tA)x \frac{dt}{t}.
\end{equation}
\end{lem}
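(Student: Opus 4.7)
The strategy is to approximate the integral in \eqref{Equ Lem McIntosh Integral} by Riemann sums and invoke Proposition \ref{Prop Convergence Bes}. The starting observation is that the change of variables $u = t\lambda$ gives
\[ F(\lambda) := \int_0^\infty g(t\lambda)\frac{dt}{t} = \int_0^\infty g(u)\frac{du}{u} = 1 \quad (\lambda > 0), \]
so heuristically $\int_0^\infty g(tA)\frac{dt}{t}$ should equal $F(A) = I$, at least on a suitably dense subspace.

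\emph{Localization.} Since $x \in D_A$, there exists $N \in \N$ with $\ddyad_n(A)x = 0$ for $|n| \geq N$. Choose $\chi := \sum_{|k| < N}\widetilde{\ddyad}_k \in C_c^\infty(0,\infty)$; then $\chi \ddyad_n = \ddyad_n$ for $|n| < N$, so by \eqref{Equ Convergence Bes} we have $\chi(A)x = \sum_{|n|<N}\ddyad_n(A)x = x$. Multiplicativity of the $\Ma$ calculus gives $g(tA)x = h_t(A)x$ with $h_t(\lambda) = g(t\lambda)\chi(\lambda)$. Since $\supp g \subset [a,b]$ and $\supp \chi \subset [\mu,\nu]$ are compact in $(0,\infty)$, $h_t \equiv 0$ unless $t \in [a/\nu, b/\mu] =: [c,d]$.

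\emph{Riemann sums.} For partitions $c = t_0^{(n)} < \ldots < t_{L_n}^{(n)} = d$ with mesh $\to 0$, set
\[ R_n(\lambda) := \chi(\lambda) \sum_{i=1}^{L_n} g(t_i^{(n)}\lambda) \frac{t_i^{(n)} - t_{i-1}^{(n)}}{t_i^{(n)}}. \]
Since $g$ is continuous ($\Bbii \hookrightarrow C^0_b$ by Proposition \ref{Prop Elementary Mih Hor}), $R_n(\lambda) \to \chi(\lambda)F(\lambda) = \chi(\lambda)$ pointwise on $(0,\infty)$. To apply Proposition \ref{Prop Convergence Bes} I need $\sup_n\|(R_n)_e\|_{\Bbii} < \infty$. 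In log-coordinates $(R_n)_e(z) = \chi_e(z) S_n(z)$ with $S_n(z) = \sum_i g_e(z + \log t_i^{(n)}) \cdot(t_i^{(n)}-t_{i-1}^{(n)})/t_i^{(n)}$. Using the equivalent norm \eqref{Equ Bbii norm} for a fixed $M > \beta$ together with the product rule $\Delta_h^M(\chi_e S_n) = \sum_{m=0}^M \binom{M}{m} \Delta_h^m \chi_e \cdot \Delta_h^{M-m}\tau_{mh}S_n$, and estimating each $\|\Delta_h^{M-m}S_n\|_\infty \lesssim (d-c)\sup_j\|\Delta_h^{M-m}g_e\|_\infty$ (only $O(1)$ translates are active on any bounded $z$-interval), one obtains
\[ \sup_n \|(R_n)_e\|_{\Bbii} \lesssim \|\chi_e\|_{C_b^M}\,\|g_e\|_{\Bbii}\,(d-c). \]

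\emph{Conclusion.} By Proposition \ref{Prop Convergence Bes}, $R_n(A)x \to \chi(A)x = x$ in $X$. Multiplicativity rewrites
\[ R_n(A)x = \sum_{i=1}^{L_n} (g(t_i^{(n)}\cdot)\chi)(A)x \cdot \frac{t_i^{(n)}-t_{i-1}^{(n)}}{t_i^{(n)}} = \sum_{i=1}^{L_n} g(t_i^{(n)}A)x\cdot \frac{t_i^{(n)}-t_{i-1}^{(n)}}{t_i^{(n)}}, \]
which is a Riemann sum for $\int_c^d g(tA)x\frac{dt}{t}$. By Lemma \ref{Lem Mihlin calculus and Differentiation}(1), $t \mapsto g(tA)x$ is continuous on $[c,d]$ and vanishes outside it, so these Riemann sums converge in $X$ to $\int_0^\infty g(tA)x\frac{dt}{t}$. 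Equating the two limits gives \eqref{Equ Lem McIntosh Integral}.

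\emph{Main obstacle.} The uniform $\Bbii$ bound on the Riemann sums $R_n$ is the only delicate step: one must control mixed differences $\Delta_h^m\chi_e \cdot \Delta_h^{M-m}\tau_{mh}S_n$ using only the $\beta$-Besov regularity of $g_e$ (not pointwise smoothness), telescoping lower-order differences of $g_e$ back to the single estimate $\|\Delta_h^M g_e\|_\infty \leq |h|^\beta\|g_e\|_{\Bbii}$. Everything else is bookkeeping.
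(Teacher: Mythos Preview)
Your argument is correct in outline and takes a genuinely different route from the paper. The paper localizes with a cutoff $\rho$ (your $\chi$) and then reduces the key identity $\bigl(\int_a^b g(t\cdot)\tfrac{dt}{t}\bigr)(A)x = \int_a^b g(tA)x\tfrac{dt}{t}$ to the case $g \in \HI(\Sigma_\omega)$, where it is already known \cite[Lemma 9.12]{KuWe}, and passes to general $g$ via the approximation Lemma~\ref{Lem HI dense in diverse spaces}. You instead prove this interchange directly by Riemann sums plus Proposition~\ref{Prop Convergence Bes}, which makes the proof self-contained (no appeal to the holomorphic case or to Lemma~\ref{Lem HI dense in diverse spaces}) at the price of a hands-on $\Bbii$ estimate on the partial sums $R_n$.

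Two small points to clean up. First, your parenthetical ``only $O(1)$ translates are active on any bounded $z$-interval'' is false: as the mesh shrinks, arbitrarily many $t_i^{(n)}$ contribute at a fixed $z$. The estimate you actually need,
\[
\|\Delta_h^{M-m} S_n\|_\infty \leq \Bigl(\sum_i \tfrac{t_i-t_{i-1}}{t_i}\Bigr)\,\|\Delta_h^{M-m} g_e\|_\infty \leq C\,\|\Delta_h^{M-m} g_e\|_\infty,
\]
follows simply because the Riemann-sum weights are uniformly summable (they approximate $\log(d/c)$); no finiteness of active terms is needed. Second, the claim that the cross terms can be ``telescoped back to the single estimate $\|\Delta_h^M g_e\|_\infty \leq |h|^\beta\|g_e\|_{\Bbii}$'' is not quite right: for $0<m<M$ you also need bounds on $\|\Delta_h^{M-m} g_e\|_\infty$ with $M-m<M$. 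The cleanest fix is to take $M > 2\beta$: then for $m\leq\beta$ one has $M-m>\beta$ so $\|\Delta_h^{M-m}g_e\|_\infty\lesssim |h|^\beta$, while for $m>\beta$ the factor $|h|^m$ from $\chi_e\in C^\infty$ already absorbs $|h|^{-\beta}$ and the trivial bound $\|\Delta_h^{M-m}g_e\|_\infty\leq 2^{M-m}\|g_e\|_\infty$ suffices. With these corrections your proof is complete.
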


\begin{proof}
Let $x \in D_A.$
Then there exists $\rho \in C^\infty_c(\R_+)$ such that $\rho(A)x = x.$
As $g$ has by assumption compact support, there exist $b > a > 0$ such that
\begin{equation}\label{Equ 2 Proof McIntosh Integral}
g(tA)\rho(A) = 0 \quad (t \in [a,b]^c).
\end{equation}
By Lemma \ref{Lem Mihlin calculus and Differentiation}, $t \mapsto g(tA)x$ is continuous, and \eqref{Equ 2 Proof McIntosh Integral} implies that
$\int_0^\infty g(tA) x \frac{dt}{t} = \int_a^b g(tA)x \frac{dt}{t}.$
Also,
\[\left(\int_a^b g(t\cdot) \frac{dt}{t}\right)(A)x = \left( \int_a^b g(t\cdot) \frac{dt}{t} \rho\right)(A)x =\left(\int_0^\infty g(t\cdot) \frac{dt}t \rho\right)(A) x = \rho(A) x = x.\]
In the third equality we have used the assumption $\int_0^\infty g(t) \frac{dt}{t} = 1,$ which extends by substitution to $\int_0^\infty g(ts) \frac{dt}{t} = 1\quad (s > 0).$
It remains to show
\[ \left( \int_a^b g(t\cdot) \frac{dt}{t} \right)(A)x = \int_a^b g(tA)x \frac{dt}{t}. \]
If $g$ belongs to $\HI(\Sigma_\omega)$ for some $\omega,$ then this is shown in \cite[Lemma 9.12]{KuWe}.
For a general $g,$ we appeal to the approximation from Lemma \ref{Lem HI dense in diverse spaces}.
\end{proof}

For the next theorem we recall that a Banach space has cotype $q$ if there is a constant $C$ so that for all $x_1,\ldots,x_n \in X$
\[ \left( \sum_k \| x_k \|^q \right)^{\frac1q} \leq C \E \left\| \sum_k \epsilon_k x_k \right\|. \]
All closed subspaces of a space $L^p(U,\mu)$ have cotype $q = \max(2,p).$

\begin{thm}\label{Thm Fractional power continuous}
Let $A$ be a $0$-sectorial operator having a bounded $\Ma$ calculus on $X.$
Suppose that the space $X$ and its dual $X'$ both have finite cotype.
Let $\theta \in \R$ and $\psi : (0,\infty) \to \C$ be a non-zero function such that for some $C,\: \epsilon > 0$ and $M > \alpha + 1,$ we have
\begin{align}
\sup_{k=0,\ldots,M} |t^{k-\theta} \psi^{(k)}(t)| & \leq C \min(t^{\epsilon},t^{-\epsilon}) \quad (t > 0) \label{Equ Thm Fractional power continuous} \\
\intertext{and}
\int_0^\infty |t^{-\theta} \psi(t)|^2 \frac{dt}{t} & < \infty. \label{Equ Thm Fractional power continuous 2}
\end{align}
Then we have
\begin{align}
 \| x \|_{\dot{X}_\theta} & \cong \|t^{-\theta} \psi(tA) x \|_{\gamma(\R_+,\frac{dt}{t},X)}
= \left( \E \left\| \sum_{k=1}^\infty \gamma_k \int_{\R_+} h_k(t) t^{-\theta} \psi(tA)x \frac{dt}{t} \right\|_X^2 \right)^{\frac12}. \label{Equ hom frac cont} \\
\intertext{Here $(\gamma_k)_{k \in \N}$ is an i.i.d. sequence of standard Gaussian random variables and $(h_k)_{k \in \N}$ is an orthonormal basis of $L^2(\R_+,\frac{dt}{t}).$
If $\theta > 0,$ $\psi$ satisfies \eqref{Equ Thm Fractional power continuous} and also} \sup_{k=0,\ldots,M} |t^{k} \psi^{(k)}(t)| & \leq C \min(t^{\epsilon},t^{-\epsilon}) \quad (t > 0),\nonumber\\
\intertext{then}
\|x\|_{X_\theta} & \cong \|(t^{-\theta} + 1) \psi(tA)x\|_{\gamma(\R_+,\frac{dt}{t},X)}. \label{Equ inhom frac cont}
\end{align}
\end{thm}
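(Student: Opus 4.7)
The plan is to reduce to the case $\theta = 0$ and then relate the continuous $\gamma$-square function to the discrete Paley-Littlewood decomposition established in Theorem \ref{Thm Fractional power norm}.

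First, I would use the identity $t^{-\theta}\psi(tA) = \widetilde{\psi}(tA)A^\theta$ (as operators on $D(A^\theta)$, via Proposition \ref{Prop Soaloc calculus}), where $\widetilde{\psi}(s) = s^{-\theta}\psi(s)$. A direct check using the product rule shows that the hypothesis \eqref{Equ Thm Fractional power continuous} on $\psi$ forces $\sup_{k=0,\ldots,M}|t^k \widetilde{\psi}^{(k)}(t)| \leq C\min(t^\epsilon, t^{-\epsilon})$ and also $\widetilde{\psi} \in L^2(\R_+, dt/t)$ by \eqref{Equ Thm Fractional power continuous 2}. So writing $y = A^\theta x$, the homogeneous statement \eqref{Equ hom frac cont} reduces to proving
\[ \|\widetilde{\psi}(tA) y \|_{\gamma(\R_+,\frac{dt}{t},X)} \cong \|y\|_X. \]

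For the direction $\lesssim$, I would discretize: using the partition of unity $(\ddyad_n)_n$, write $y = \sum_n \ddyad_n(A)y$ with unconditional convergence (Theorem \ref{Thm PL Decomposition}), and on each dyadic block $[2^m,2^{m+1}]$ exploit the localization $\widetilde{\psi}(tA)\ddyad_n(A) = [\widetilde{\psi}(t\cdot)\ddyad_n](A)$. The Mihlin-type decay of $\widetilde{\psi}$ gives $\|[\widetilde{\psi}(t\cdot)\ddyad_n]_2\|_{\Bbii} \lesssim 2^{-\epsilon|n+m|}$ uniformly in $s \in [1,2]$ with $t = 2^m s$, so the $\Ma$ calculus yields $\|\widetilde{\psi}(tA)\ddyad_n(A) y\| \lesssim 2^{-\epsilon|n+m|}\|\ddyad_n(A)y\|$. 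Using Lemma \ref{Lem Technical gamma spaces} to integrate over each $[2^m,2^{m+1}]$, applying the off-diagonal decay to sum in $n$, and then converting the resulting Bernoulli sum to a Gaussian one using the finite cotype of $X$, one gets
\[ \|\widetilde{\psi}(tA) y\|_{\gamma(\R_+,\frac{dt}{t},X)} \lesssim \E \bignorm{\sum_m \epsilon_m \ddyad_m(A) y }_X \cong \|y\|_X. \]

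For the direction $\gtrsim$, I would use a reproducing formula in the spirit of Lemma \ref{Lem McIntosh Integral}. Since $\widetilde{\psi} \not\equiv 0$ and decays at $0$ and $\infty$, there is a function $\widetilde{\phi}$ satisfying the same type of bounds as $\widetilde{\psi}$ and $\int_0^\infty \widetilde{\phi}(t)\widetilde{\psi}(t) \frac{dt}{t} = 1$ (take e.g.\ $\widetilde\phi(t) = c \widetilde{\psi}(t)$ with appropriate normalization). Lemma \ref{Lem McIntosh Integral} applied on the calculus core $D_A$ (which is dense in $X$) gives $y = \int_0^\infty \widetilde{\phi}(tA)\widetilde{\psi}(tA) y \frac{dt}{t}$. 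Then by duality and Lemma \ref{Lem Technical gamma spaces}(1),
\[ |\langle y, y'\rangle| = \left| \int_0^\infty \langle \widetilde\psi(tA)y, \widetilde\phi(tA)'y'\rangle \frac{dt}{t} \right| \leq \|\widetilde\psi(tA)y\|_{\gamma(\R_+,\frac{dt}{t},X)} \| \widetilde\phi(tA)'y'\|_{\gamma(\R_+,\frac{dt}{t},X')}. \]
Since $A'$ inherits the bounded $\Ma$ calculus on $X'$ and $X'$ has finite cotype, the already proven $\lesssim$ direction applied on $X'$ bounds the second factor by $C\|y'\|$, finishing the $\gtrsim$ inequality by taking the supremum over $y'$.

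The main obstacle is the direction $\lesssim$: organising the double sum over $(m,n)$ of the localized pieces so that the off-diagonal decay $2^{-\epsilon|n+m|}$ absorbs the cross terms and one is left with exactly the discrete random sum of Theorem \ref{Thm Fractional power norm}. The finite cotype hypotheses on $X$ and $X'$ enter precisely here (and in the dual argument), because Gaussian and Bernoulli randomisations must be interchangeable to go from the $\gamma$-norm to the discrete Rademacher sum. For the inhomogeneous statement \eqref{Equ inhom frac cont}, the same scheme applies with the additional hypothesis $|t^k\psi^{(k)}(t)| \leq C\min(t^\epsilon,t^{-\epsilon})$ controlling the ``low frequency'' part $\|\psi(tA)x\|_{\gamma(\R_+,\frac{dt}{t},X)} \lesssim \|x\|$, which together with the homogeneous bound on $\|t^{-\theta}\psi(tA)x\|_\gamma$ and the inhomogeneous decomposition in Theorem \ref{Thm Fractional power norm} yields \eqref{Equ inhom frac cont}.
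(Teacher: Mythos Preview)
Your reduction to $\theta=0$, the duality argument for the lower bound via the reproducing formula and Lemma~\ref{Lem Technical gamma spaces}(1), and the deduction of the inhomogeneous case are all essentially what the paper does. The real difference is in the upper bound $\|\widetilde\psi(tA)y\|_{\gamma}\lesssim\|y\|$.

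The paper does \emph{not} introduce the partition $(\ddyad_n)_n$ or a double index $(m,n)$ here. Instead it freezes $t$ at the dyadic endpoints by the fundamental theorem of calculus (Lemma~\ref{Lem Mihlin calculus and Differentiation}): for $t\in[2^n,2^{n+1})$,
\[
\psi(tA)x=\psi(2^nA)x+\int_1^2\chi_{[2^n,t]}(2^ns)\,2^nsA\,\psi'(2^nsA)x\,\frac{ds}{s},
\]
then uses that the $\chi_{[2^n,2^{n+1})}$ are orthogonal in $L^2(\R_+,dt/t)$ to convert the $\gamma$-norm into a single Gaussian sum $\E\|\sum_n\gamma_n\psi(2^nA)x\|$ (plus a similar derivative term handled by Lemma~\ref{Lem Technical gamma spaces}(2)). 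This is then bounded by $\|x\|$ via \eqref{Equ Paley-Littlewood idea} after checking directly from the decay hypothesis that $\sup_{|a_n|\le1}\bigl\|\sum_n a_n\psi(2^n\cdot)\bigr\|_{\Ma}<\infty$. No off-diagonal double sum is needed.

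Your route via $y=\sum_n\ddyad_n(A)y$ and the $2^{-\epsilon|n+m|}$ decay is plausible, but as written it has a gap: the $\gamma$-norm does not decompose as a sum over the blocks $[2^m,2^{m+1}]$, and ``integrating over each block via Lemma~\ref{Lem Technical gamma spaces}'' does not by itself discretise the $t$-variable. Pointwise operator bounds $\|\widetilde\psi(tA)\ddyad_n(A)\|\lesssim 2^{-\epsilon|n+m|}$ are not enough to control a $\gamma$-norm unless you also freeze the $t$-dependence inside each block---which is exactly the FTC step the paper uses and you have omitted. Once that step is in place your double-sum organisation can be made to work, but the paper's single-sum argument is shorter.

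One minor point applying to both: Lemma~\ref{Lem McIntosh Integral} is stated for compactly supported $g$, while $|\widetilde\psi|^2$ is not compactly supported. For $x\in D_A$ this is harmless because the decay of $\widetilde\psi$ at $0$ and $\infty$ makes the integral absolutely convergent; the paper tacitly uses this extension as well.
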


If $A$ is not injective, then we refer to Remark \ref{Rem not injective} how to modify \eqref{Equ hom frac cont} and \eqref{Equ inhom frac cont} in this case.

\begin{rem}
We remark that \eqref{Equ Thm Fractional power continuous} and \eqref{Equ Thm Fractional power continuous 2} are satisfied e.g. for
\begin{align*}
\psi_{\exp}(t) & = t^a \exp(-t^b) \quad (a,\:b > 0,\: \frac{a}{b} > \theta)
\intertext{and}
\psi_{\text{res}}(t) & = t^a (\lambda - t)^{-b} \quad (\lambda \in \C \backslash [0,\infty),\: b > 0 \text{ and }\theta < a < b + \theta).
\end{align*}
\end{rem}

\begin{proof}
By density of $D(A^\theta)$ in $\dot{X}_\theta$ and in $X_\theta,$ it suffices to show \eqref{Equ hom frac cont} and \eqref{Equ inhom frac cont} for $x \in D(A^\theta).$
We first reduce \eqref{Equ hom frac cont} to the case $\theta = 0.$
Set temporarily $\psi_\theta(t) = t^{-\theta} \psi(t).$
Then $\psi$ satisfies the hypotheses of the proposition for $\theta$ if and only if $\psi_\theta$ does for $0.$
By Proposition \ref{Prop Soaloc calculus} (6), for $x \in D(A^\theta),$ $t^{-\theta}\psi(tA)x = \psi_\theta(tA) A^\theta x.$
Thus, if \eqref{Equ hom frac cont} holds for $\theta = 0,$
also
\[\|t^{-\theta}\psi(tA)x\|_{\gamma(\R_+,\frac{dt}{t},X)} = \|\psi_\theta(tA)A^\theta x\|_{\gamma(\R_+,\frac{dt}{t},X)} \cong \|A^\theta x\| \quad (x \in D(A^\theta)).\]
Assume now $\theta = 0.$
By Lemma \ref{Lem Mihlin calculus and Differentiation} and the fundamental theorem of calculus, we have for $n \in \Z$ and $t \in [2^n,2^{n+1})$
\[ \psi(tA) x = \psi(2^n A)x + \int_1^2 \chi_{[2^n,t]}(2^n s) 2^ns A \psi'(2^nsA) x \frac{ds}{s}.\]
Writing $\chi_n = \chi_{[2^n,2^{n+1})}$ and $\psi(tA) x = \sum_{n \in \Z} \chi_n(t) \psi(tA)x,$ this yields by Lemma \ref{Lem Technical gamma spaces} (2) (that the assumption there is satisfied follows easily from \eqref{Equ 2 Proof Prop Fractional power continuous} below):
\[
\|\psi(tA) x\|_{\gamma(\R_+,\frac{dt}{t},X)}  \leq \Bignorm{ \sum_{n \in \Z} \chi_n(t) \psi(2^n A)  x }_{\gamma(\R_+,\frac{dt}{t},X)} + \int_1^2 \Bignorm{ \sum_{n \in \Z} \chi_n(t)  2^nsA \psi'(2^nsA) x}_{\gamma(\R_+,\frac{dt}t,X)} \frac{ds}{s}.
\]
Since the $\chi_n$ are orthonormal in $L^2(\R_+,\frac{dt}{t})$ and $\|\chi_n\|_{L^2(\R_+,\frac{dt}{t})}$ does not depend on $n,$ we have
\[ \bignorm{ \sum_{n \in \Z}\chi_n(t) \psi(2^n A)  x }_{\gamma(\R_+,\frac{dt}{t},X)}
 \cong \E \bignorm{ \sum_{n \in \Z} \gamma_n \psi(2^nA)x} \cong \E \bignorm{ \sum_{n \in \Z} \epsilon_n  \psi(2^nA)x},
\]
where the last equivalence follows from the fact that $X$ has finite cotype.
The last expression can be estimated by $\|x\|$ according to \eqref{Equ Classical Mihlin condition} and \eqref{Equ Paley-Littlewood idea},
 provided that for some $C > 0$ and for any choice of scalars $a_n = \pm 1,$ we have
$\bignorm{ \sum_{n \in \Z} a_n \psi(2^n \cdot) }_{\Ma} \leq C.$
We have for $M > \alpha$ that
\begin{align*}
\bignorm{ \sum_{n \in \Z} a_n \psi(2^n \cdot) }_{\Ma} & \lesssim \sup_{t > 0} \sup_{k=0,\ldots,M} t^k \left| \sum_{n \in \Z} a_n 2^{nk} \psi^{(k)}(2^n t) \right| \\
& \leq \sup_{t \in [1,2]} \sup_{k = 0,\ldots,M} t^k \sum_{n \in \Z} \left| 2^{nk} \psi^{(k)}(2^n t) \right| \\
& \leq C \sum_{n \in \Z} 2^{-|n| \epsilon} 2^\epsilon \\
& < \infty.
\end{align*}
Replacing $\psi$ by $\psi_1 = s (\cdot)\psi'(s(\cdot)),$ by the same arguments, we also have with $M > \alpha + 1$ that
\begin{equation}\label{Equ 2 Proof Prop Fractional power continuous}
\bignorm{ \sum_{n \in \Z} \chi_n(t)  2^nsA \psi'(2^nsA) x}_{\gamma(\R_+,\frac{dt}t,X)} \leq C \|x\|.
\end{equation}
Note that $\|\psi_1\|_{\Ma}$ is independent of $s,$ and thus also the above constant $C$ is.
We have shown that $\|\psi(tA)x\|_{\gamma(\R_+,\frac{dt}{t},X)} \leq c_1 \|x\|.$\\

For the reverse inequality, we assume first that $x$ belongs to the calculus core $D_A.$
By Lemma \ref{Lem McIntosh Integral},
\[ c x = \int_0^\infty |\psi|^2(tA) x \frac{dt}{t}\]
with $c = \displaystyle \int_0^\infty |\psi(t)|^2 \frac{dt}{t} \in (0,\infty).$
Thus, by Lemma \ref{Lem Technical gamma spaces}, for any $x' \in X',$
\[ |\spr{x}{x'}| = | c^{-1}\int_0^\infty \spr{\psi(tA)x}{\overline{\psi}(tA)'x'} \frac{dt}{t} |
 \lesssim \|\psi(tA)x\|_{\gamma(\R_+,\frac{dt}{t},X)} \|\psi(tA)'x'\|_{\gamma(\R_+,\frac{dt}{t},X')}.
\]
Now proceed as in the first part, noting that $X'$ has finite cotype, and deduce $\|\psi(tA)'x'\| \lesssim \|x'\|.$
This shows $\|x\| \leq c_2 \|\psi(tA)x\|_\gamma$ for $x \in D_A.$
For a general $x \in X,$ let $(x_n)_n \subset D_A$ with $x_n \to x.$
Then
\[\|\psi(tA)x\| \geq \|\psi(tA)x_n\| - \|\psi(tA)(x-x_n)\| \geq c_2^{-1} \|x_n\| - c_1 \|x-x_n\|,\]
and letting $n \to \infty$ shows \eqref{Equ hom frac cont} for all $x \in X.$
Finally, \eqref{Equ inhom frac cont} is a simple consequence of \eqref{Equ hom frac cont}.
Just note that the right hand side of \eqref{Equ inhom frac cont} satisfies
\[\|(1+t^{-\theta}) \psi(tA) x\| \cong \|\psi(tA)x\| + \|t^{-\theta} \psi(tA)x\|.\]
Indeed, ``$\lesssim$'' is the triangle inequality and ``$\gtrsim$'' follows from the two inequalities
\[\|\psi(tA)x\|_\gamma \leq \|(1+t^{-\theta})^{-1}\|_\infty \|(1+t^{-\theta})\psi(tA)x\|_\gamma\]
and
\[\|t^{-\theta}\psi(tA)x\|_\gamma \leq \|t^{-\theta}(1+t^{-\theta})^{-1}\|_\infty \|(1+t^{-\theta})\psi(tA)x\|_\gamma.\]
\end{proof}

In the above theorems, the Paley-Littlewood decomposition was deduced from a $\Ma$ calculus of $A.$
The following proposition gives a result in the converse direction.
For a sufficient criterion for the $R$-bounded $\Ma_1$ calculus in the proposition see Proposition \ref{Prop Resolvent Ma0}.

\begin{prop}\label{Prop weak to strong calculus}
Let $A$ be a $0$-sectorial operator with an $R$-bounded $\Ma_1$ calculus, that is $\{ f(A):\: f \in \Ma_1,\: \|f\|_{\Ma_1} \leq 1 \}$ is $R$-bounded, and let $A$ have a Paley-Littlewood decomposition, i.e. \eqref{Equ Prop PL Decomposition} holds, in particular, $\sum_{n \in \Z} \ddyad_n(A) x$ converges unconditionally in $X.$
Then $A$ has a bounded $\Ma$ calculus.
\end{prop}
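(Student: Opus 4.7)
The plan is to prove $\|f(A)\| \lesssim \|f\|_{\Ma}$ for $f$ in the dense subclass $\bigcap_{\omega > 0} \HI(\Sigma_\omega) \cap \Ma$ (cf.\ Lemma \ref{Lem HI dense in diverse spaces}) and then extend by continuity as in Definition \ref{Def Line calculi}. The workhorse is the identity
\[ f(A) x = \sum_{n \in \Z} (\widetilde{\ddyad}_n f)(A) \ddyad_n(A) x ,\]
which holds because $\widetilde{\ddyad}_n \ddyad_n = \ddyad_n$ forces $(\widetilde{\ddyad}_n f)(A) \ddyad_n(A) = (\ddyad_n f)(A) = \ddyad_n(A) f(A)$, and the assumed Paley--Littlewood decomposition applied to $f(A) x$ yields unconditional convergence of $\sum_n \ddyad_n(A) f(A) x$ to $f(A) x$. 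Note that each factor $(\widetilde{\ddyad}_n f)(A)$ is well-defined via the $\Ma_1$-calculus since $\widetilde{\ddyad}_n f$ has compact support in $\R_+$.

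Starting from $\|f(A) x\|$, I would first invoke the $\gtrsim$-direction of \eqref{Equ Prop PL Decomposition} on $y = f(A) x$, bounding $\|y\|$ by $\E \|\sum_n \epsilon_n \ddyad_n(A) y\|$, then rewrite via the identity above as $\E \|\sum_n \epsilon_n (\widetilde{\ddyad}_n f)(A) \ddyad_n(A) x\|$. The hypothesis of an $R$-bounded $\Ma_1$ calculus gives
\[ R\bigl( \{(\widetilde{\ddyad}_n f)(A) : n \in \Z\} \bigr) \leq C \sup_{n \in \Z} \|\widetilde{\ddyad}_n f\|_{\Ma_1} ,\]
so a standard pull-out (Kahane contraction combined with the definition of $R$-bound on the normalized operators) produces
\[ \E \bignorm{ \sum_n \epsilon_n (\widetilde{\ddyad}_n f)(A) \ddyad_n(A) x } \lesssim \sup_n \|\widetilde{\ddyad}_n f\|_{\Ma_1} \cdot \E \bignorm{ \sum_n \epsilon_n \ddyad_n(A) x }.\]
The $\lesssim$-direction of \eqref{Equ Prop PL Decomposition} applied to $x$ absorbs the remaining random sum into $\|x\|$, reducing the whole problem to the uniform estimate $\sup_n \|\widetilde{\ddyad}_n f\|_{\Ma_1} \lesssim \|f\|_{\Ma}$.

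The main obstacle is this final uniform bound. I would argue as follows: by \eqref{Equ Partition of unity}, the products $\widetilde{\ddyad}_n \ddyad_k$ vanish for $|k-n| \geq 3$, so the series defining $\|\widetilde{\ddyad}_n f\|_{\Ma_1} = \sum_k \|\widetilde{\ddyad}_n f \ddyad_k\|_{\Ma}$ reduces to at most five non-zero terms. Since $\Ma$ is a Banach algebra (Proposition \ref{Prop Elementary Mih Hor}) and the $\Ba$-norm is invariant under the translations $t \mapsto t + c$ defining the rescalings $(\ddyad_k)_e$ and $(\widetilde{\ddyad}_n)_e$, one has $\|\ddyad_k\|_{\Ma} = \|\ddyad_0\|_{\Ma}$ and $\|\widetilde{\ddyad}_n\|_{\Ma} = \|\widetilde{\ddyad}_0\|_{\Ma}$ independently of $n$ and $k$. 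Each surviving term is therefore controlled by $\|\widetilde{\ddyad}_0\|_{\Ma} \|\ddyad_0\|_{\Ma} \|f\|_{\Ma}$, delivering the desired $n$-uniform bound. Chaining the estimates yields $\|f(A) x\| \lesssim \|f\|_{\Ma} \|x\|$ on the dense subclass, and extension by density gives the bounded $\Ma$ calculus.
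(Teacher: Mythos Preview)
Your proof is correct and follows essentially the same strategy as the paper's: apply the Paley--Littlewood equivalence to $f(A)x$, factor each dyadic piece as a compactly supported multiplier times a dyadic block, use the $R$-bounded $\Ma_1$ calculus to pull out the multipliers, and close with the Paley--Littlewood equivalence on $x$. The only cosmetic differences are that the paper swaps the roles of $\ddyad_n$ and $\widetilde{\ddyad}_n$ (writing $(f\ddyad_n)(A)\,\widetilde{\ddyad}_n(A)$ instead of your $(\widetilde{\ddyad}_n f)(A)\,\ddyad_n(A)$) and starts from $f\in \HI_0(\Sigma_\omega)$, where $f(A)$ is automatically bounded, before extending---whereas your choice of dense class requires tacitly restricting to $x\in D(f(A))$ first.
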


\begin{proof}
Let $f \in \HI_0(\Sigma_\omega)$ for some $\omega \in (0,\pi).$
Let further $(\ddyad_n)_n$ be a dyadic partition of unity on $\R_+.$
Then $f\ddyad_n$ belongs to $\Ma_1$ for any $n \in \Z$ and $\sup_{n \in \Z} \|f\ddyad_n\|_{\Ma} \lesssim \|f\|_{\Ma}.$
Using the assumptions we deduce for $x \in X$
\begin{align*} \|f(A) x\| & \cong \E \bignorm{ \sum_{n \in \Z} \epsilon_n (f \ddyad_n)(A) \widetilde{\ddyad_n}(A) x } \\
& \leq R((f \ddyad_n)(A) :\: n \in \Z) \E \bignorm{ \sum_{n \in \Z} \epsilon_n \widetilde{\ddyad_n}(A) x} \\
& \lesssim \|f\|_{\Ma} \| x \|.
\end{align*}
Thus the proposition follows from Lemma \ref{Lem HI dense in diverse spaces} along with the well-known approximation of arbitrary $\HI(\Sigma_\omega)$ functions 
by functions as $f$ above.
\end{proof}

A Banach space has by definition type $2$ if there is a constant $C$ so that for all $x_1,\ldots,x_n \in X$
\[ \E \left\| \sum_k \epsilon_k x_k \right\| \leq C \left( \sum_k \| x_k \|^2 \right)^{\frac12}.\]
All closed subspaces of a space $L^p(U,\mu)$ with $p \geq 2$ have type $2.$
As an immediate consequence of Theorems \ref{Thm PL Decomposition} and \ref{Thm Fractional power continuous}, we obtain an extension of the results of \cite{IP} to our general setting.

\begin{cor}
Let $X$ be a Banach space of type 2 and $A$ a $0$-sectorial operator with a bounded $\Ma$ calculus for some $\alpha > 0.$
If $(\ddyad_n),(\dyad_n)$ and $\theta,\psi$ are as in Theorems \ref{Thm PL Decomposition} and \ref{Thm Fractional power continuous} respectively, then we have the inequalities

\begin{align*}
\|x\| & \leq C \left( \sum_{n \in \Z} \| \ddyad_n(A) x \|^2 \right)^{\frac12} \\
\|x\| & \leq C \left( \sum_{n=0}^\infty \| \dyad_n(A) x \|^2 \right)^{\frac12} \\
\|x\|_\theta & \leq C \left( \int_0^\infty \| t^{-\theta} \psi(tA) x\|^2 \frac{dt}{t} \right)^{\frac12}.
\end{align*}

\end{cor}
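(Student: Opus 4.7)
The plan is to combine the norm equivalences established in Theorems \ref{Thm PL Decomposition} and \ref{Thm Fractional power continuous} with the defining inequality of type $2$; no further properties of $A$ beyond those already exploited in these two theorems are needed.

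For the first estimate, Theorem \ref{Thm PL Decomposition} yields
\[ \|x\| \cong \E \Bignorm{ \sum_{n \in \Z} \epsilon_n \ddyad_n(A) x }. \]
Setting $y_n = \ddyad_n(A) x$ and applying the type $2$ inequality $\E\|\sum_n \epsilon_n y_n\| \leq C (\sum_n \|y_n\|^2)^{1/2}$ to the right-hand side gives the desired bound. The second estimate is proved in exactly the same way, using instead the inhomogeneous equivalence \eqref{Equ Prop PL Decomposition 2} of Theorem \ref{Thm PL Decomposition} and summing over $n \in \N_0$.

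For the third estimate, I would start from the continuous characterization
\[ \|x\|_{\dot{X}_\theta} \cong \| t^{-\theta} \psi(tA) x \|_{\gamma(\R_+, \frac{dt}{t}, X)} \]
given by Theorem \ref{Thm Fractional power continuous}, and then dominate the $\gamma$-norm by the $L^2(\R_+, \frac{dt}{t}; X)$ norm of the integrand. This is the continuous analog of type $2$: if $X$ has type $2$ and $(S,\mu)$ is a $\sigma$-finite measure space, then there is a continuous embedding $L^2(S,\mu;X) \hookrightarrow \gamma(L^2(S,\mu),X)$ whose norm is controlled by the type $2$ constant of $X$ (see e.g.\ \cite{vN} and Subsection \ref{Subsec Prelims Rad}). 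Applied to $f(t) = t^{-\theta}\psi(tA)x$ this yields the claim.

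The only slightly non-routine ingredient is this final embedding $L^2(S;X) \hookrightarrow \gamma(L^2(S),X)$, but it is a classical consequence of the type $2$ property (one checks it first for finite-rank functions via the Marcinkiewicz--Zygmund type inequality recalled in \eqref{Equ PL equivalence} and then passes to the completion). Everything else is a direct substitution into the theorems already proved.
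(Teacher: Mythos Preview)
Your proposal is correct and follows essentially the same route as the paper: the paper simply remarks that for $L^p$ with $p\geq 2$ the inequalities follow from Theorems \ref{Thm PL Decomposition} and \ref{Thm Fractional power continuous} together with \eqref{Equ PL equivalence} and Minkowski's inequality, and for general $X$ it refers to \cite{KaW2}, which is exactly where the type $2$ inequality and the embedding $L^2(S;X)\hookrightarrow \gamma(L^2(S),X)$ you invoke are recorded. Your write-up is thus a faithful unpacking of that reference.
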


If $X = L^p(U,\mu),\:p \geq 2,$ then by \eqref{Equ PL equivalence} these inequalities follow from Theorems \ref{Thm PL Decomposition} and \ref{Thm Fractional power continuous} simply by Minkowski's inequality.
For general $X$ we refer to \cite{KaW2}.

\section{Besov type decomposition}\label{Sec 5 Real Interpolation}

We now turn to the description of real interpolation spaces in the scales $\dot{X}_\theta$ and $X_\theta.$
For $A = -\Delta$ on $L^p(\R^d),$ these interpolation spaces correspond to homogeneous and inhomogeneous Besov spaces.
Abstract Besov spaces have been described in \cite[Theorem 3.6.2]{AmBG}
in the case that $A$ generates a $C_0$-group with polynomial growth,
and in \cite{HaaInterpolation} in the case that $A$ is a sectorial operator with a bounded $\HI$ calculus.
See also \cite{Zh} for certain operators on $L^p$ self-adjoint on $L^2,$ with kernel estimates of $\dyad_n(A).$

Throughout the rest of the section we assume that $A$ is a $0$-sectorial operator with bounded $\Ma_1$ calculus.
Let $(\ddyad_n)_n$ and $(\dyad_n)_n$ be a homogeneous and an inhomogeneous partition of unity on $\R_+.$
We introduce the spaces $\dB^\theta_q(A)$ and $\B^\theta_q(A)\: (\theta \in \R,\:q \in [1,\infty])$ to be
\begin{align*} \dB^\theta_q(A) & = \left\{ x \in \dot{X}_{-N} + \dot{X}_N :\: \|x\|_{\dB^\theta_q(A)} = \left( \sum_{n \in \Z} 2^{n \theta q} \|\ddyad_n(A)x\|^q \right)^{\frac1q} < \infty \right\}
 \intertext{and}
\B^\theta_q(A) & = \left\{ x \in \dot{X}_{-N} + \dot{X}_N :\: \|x\|_{\B^\theta_q(A)} = \left( \sum_{n \in \N_0} 2^{n \theta q} \|\dyad_n(A)x\|^q \right)^{\frac1q} < \infty \right\},
\end{align*}
(standard modification if $q = \infty$),
where $-N < \theta < N.$
Often we write $\dB^\theta_q$ for $\dB^\theta_q(A)$ and $\B^\theta_q$ for $\B^\theta_q(A)$ if confusion seems unlikely.
Note that as remarked at the beginning of the proof of Theorem \ref{Thm Fractional power norm}, $\ddyad_n(A)$ is a bounded operator on $\dot{X}_N$ provided $A$ has an $\Ma_1$ calculus.
One even has that $\ddyad_n(A)$ maps $X_N$ into $X,$ so that $x_m = \sum_{k = -m}^m \ddyad_k(A)x$ belongs to $D_A$ for $x \in \dB^\theta_q.$
It is easy to check that $x_m$ converges to $x$ in $\dB^\theta_q$ for $q < \infty,$ so that $D_A$ is a dense subset in $\dB^\theta_q$ and the definition of $\dB^\theta_q$ is independent of $N.$
Similarly, $\widetilde{D_A} = \{ x \in X :\: \exists \:N \in \N:\: \dyad_n(A)x = 0 \text{ for }n \geq N\}$ is dense in $\B^\theta_q$ for $q < \infty$ and the definition of $\B^\theta_q$ is independent of $N.$
Indeed, it only remains to check that $\dyad_0(A)$ belongs to $B(X).$

To this end, write $\dyad_0(t) = \exp(-t) + (\dyad_0(t) - \exp(-t)) = \exp(-t) + \sum_{n \in \Z} (\dyad_0(t) - \exp(-t)) \ddyad_n(t).$
Let $g_n(t) = (\dyad_0(t) - \exp(-t)) \ddyad_n(t).$
We estimate the $\Ma$ norm of $g_n$ separately for $n \leq -2,\,-1 \leq n \leq 1$ and $2 \leq n.$
If $n \leq -2,$ then $g_n(t) = (1 - \exp(-t)) \ddyad_n(t)$ and an elementary calculation shows that $|g_n(t)| \leq C 2^{n}$ and also $| t^k g_n^{(k)}(t) | \leq C 2^{n}.$
Then by \eqref{Equ Classical Mihlin condition}, $\|g_n\|_{\Ma} \leq C 2^{n}.$
If $-1 \leq n \leq 1,$ then $\|g_n\|_{\Ma} \leq C,$ whereas if $2 \leq n,$ then
$g_n(t) =  - \exp(-t) \ddyad_n(t),$ so that $|t^k g_n^{(k)}(t)| \leq C 2^{k(n+1)} \exp(-2^{n-1}).$
It follows that $\|g_n\|_{\Ma} \leq C 2^{(\lfloor \alpha \rfloor + 1)(n+1)} \exp(-2^{n-1}).$
Summing up, we obtain
\[ \sum_{n \in \Z} \|g_n\|_{\Ma} \leq \sum_{n \leq -2} C2^{n} + \sum_{n=-1}^n C + \sum_{n \geq 2} C 2^{(\lfloor \alpha \rfloor + 1)(n+1)} \exp(-2^{n-1}) < \infty. \]
Thus by the $\Ma_1$ calculus, $\|\dyad_0(A)\| \leq \|\exp(-A)\| + \sum_{n \in \Z} \|g_n(A)\| < \infty.$
Now it is easy to check that also $\dyad_0(A) : X_N \to X.$

If $A$ is not injective, then we refer to Remark \ref{Rem not injective} how to modify $\|x\|_{\dB^\theta_q}$ and $\|x\|_{\B^\theta_q}$ in this case.

\begin{prop}\label{Prop Besov interpolation scale}
If $A$ has a bounded $\Ma_1$ calculus, then the spaces $\dB^\theta_q\:(\theta \in \R,\: q \in [1,\infty])$ form a real interpolation scale, i.e.
$(\dB^{\theta_0}_{q_0},\dB^{\theta_1}_{q_1})_{\vartheta,q} = \dB^{\theta_\vartheta}_q$ with $\vartheta \in (0,1),$ $\theta_\vartheta = \theta_1 \vartheta + \theta_0 ( 1 - \vartheta )$ and $\theta_0 \neq \theta_1.$
Also the spaces $\B^\theta_q\: (\theta \in \R,\: q \in [1,\infty])$
form a real interpolation scale, i.e.
$(\B^{\theta_0}_{q_0},\B^{\theta_1}_{q_1})_{\vartheta,q} = \B^{\theta_\vartheta}_q.$ 
\end{prop}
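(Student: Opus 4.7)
The plan is to apply the standard retraction/coretraction principle of real interpolation (cf.\ Bergh--L\"ofstr\"om, Theorem~6.4.2, or Triebel, Theorem~1.2.4), reducing everything to the classical identity
\[
(\ell^{q_0}_{\theta_0}(\Z; X), \ell^{q_1}_{\theta_1}(\Z; X))_{\vartheta, q} = \ell^q_{\theta_\vartheta}(\Z; X), \qquad \theta_0 \neq \theta_1,
\]
for the weighted vector-valued sequence spaces $\ell^q_\theta(\Z; X) = \{(x_n)_n : \sum_n 2^{n\theta q} \|x_n\|^q < \infty\}$. As coretraction and retraction I would take
\[
J : x \mapsto (\ddyad_n(A) x)_{n \in \Z}, \qquad R : (x_n)_n \mapsto \sum_{n \in \Z} \widetilde{\ddyad}_n(A) x_n,
\]
with $\widetilde{\ddyad}_n$ as in \eqref{Equ tilde partition}.

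By the very definition of the norm, $J$ is an isometric embedding of $\dB^\theta_q$ into $\ell^q_\theta(\Z; X)$. To see that $R$ is bounded, the identity \eqref{Equ Partition of unity} yields $\widetilde{\ddyad}_m \ddyad_n = 0$ whenever $|n - m| \geq 2$, so that
\[
\ddyad_m(A) R((x_n)) = \sum_{|n - m| \leq 2} (\ddyad_m \widetilde{\ddyad}_n)(A) x_n.
\]
The operators $(\ddyad_m \widetilde{\ddyad}_n)(A)$ are uniformly bounded in $m, n$: each symbol is the $2^m$-dilate of one of finitely many fixed functions in $\Ma_1$, and both $\|\cdot\|_{\Ma_1}$ and the assumed bounded $\Ma_1$ calculus are dilation invariant. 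A standard finite-overlap estimate in $\ell^q$ then gives $\|R((x_n))\|_{\dB^\theta_q} \lesssim \|(x_n)\|_{\ell^q_\theta(\Z; X)}$. Finally, $R \circ J(x) = \sum_n \widetilde{\ddyad}_n(A) \ddyad_n(A) x = \sum_n \ddyad_n(A) x$, and the right-hand side reduces to $x$ for $x \in D_A$ (a finite sum) and extends to $\dB^\theta_q$ for $q < \infty$ by the density of $D_A$ noted before the proposition; for $q = \infty$ the identity $R \circ J = \Id$ is interpreted in the ambient space $\dot{X}_{-N} + \dot{X}_N$, which is sufficient for the retraction machinery.

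The retraction theorem then transports the sequence-space interpolation identity to $(\dB^{\theta_0}_{q_0}, \dB^{\theta_1}_{q_1})_{\vartheta, q} = \dB^{\theta_\vartheta}_q$ with equivalent norms. The inhomogeneous case $\B^\theta_q$ is identical after replacing $\Z$ by $\N_0$ and $\ddyad_n$ by $\dyad_n$, using the bound $\dyad_0(A) \in B(X)$ already established in the discussion preceding the proposition. The main technical point is the uniform $\Ma_1$-calculus estimate for the family $\{(\ddyad_m \widetilde{\ddyad}_n)(A) : |m - n| \leq 2\}$; this is essentially the only place where the functional calculus assumption enters, and the estimate comes for free from the dilation invariance of $\|\cdot\|_{\Ma_1}$.
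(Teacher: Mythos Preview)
Your retraction/coretraction argument is a valid and well-known alternative, but it is not the route the paper takes. The paper works directly with the $K$-functional, in the style of the classical Besov-space proof in Triebel: first it shows $(\dB^{\theta_0}_\infty,\dB^{\theta_1}_\infty)_{\vartheta,q} \hookrightarrow \dB^{\theta_\vartheta}_q$ by bounding $2^{k\theta_0}\|\ddyad_k(A)x\|$ against $K(2^{k(\theta_0-\theta_1)},x)$, and then $\dB^{\theta_\vartheta}_q \hookrightarrow (\dB^{\theta_0}_r,\dB^{\theta_1}_r)_{\vartheta,q}$ for $r\le q$ via the explicit splitting $x=(\sum_{j\le k}\ddyad_j)(A)x+(\sum_{j>k}\ddyad_j)(A)x$, followed by H\"older and an interchange of sums; the two inclusions are then chained through the monotone embeddings $\dB^\theta_r\hookrightarrow \dB^\theta_s$ for $r\le s$. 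In the paper the $\Ma_1$ calculus enters only to bound these partial-sum operators uniformly, which is exactly the counterpart of your uniform bound on $\{(\ddyad_m\widetilde{\ddyad}_n)(A):|m-n|\le 2\}$. Your approach is shorter and more conceptual once the weighted sequence-space interpolation identity and the retraction theorem are taken for granted; the paper's approach is self-contained and makes the single use of the functional calculus completely transparent. One point in your sketch that deserves an extra line: for $q_i=\infty$ the map $R$ is well-defined only after one checks that $\sum_n\widetilde{\ddyad}_n(A)x_n$ converges in the ambient space $\dot{X}_{-N}+\dot{X}_N$ (this follows from $\|\widetilde{\ddyad}_n(A)y\|_{\dot{X}_{\mp N}}\lesssim 2^{\mp nN}\|y\|$), and $R\circ J=\Id$ there is not obtained by density of $D_A$ but by computing $\ddyad_m(A)(RJx-x)=0$ for every $m$ and invoking that this forces $RJx=x$ in $\dot{X}_{-N}+\dot{X}_N$.
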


\begin{proof}
We start with the homogeneous Besov type spaces.
We proceed in a similar manner to the classical Besov spaces, see e.g. \cite[2.4.2]{Triea}.
Note that $\dB^{\theta_0}_{q_0},\dB^{\theta_1}_{q_1} \hookrightarrow \dot{X}_N + \dot{X}_{-N},$ where $N > \max(|\theta_0|,|\theta_1|),$
so that taking real interpolation spaces is meaningful.

In a first step, we show that $(\dB^{\theta_0}_\infty,\dB^{\theta_1}_\infty)_{\vartheta,q} \hookrightarrow \dB^{\theta_\vartheta}_q.$
Let $(\ddyad_n)_{n \in \Z}$ be a homogeneous dyadic partition of unity on $\R_+.$
Let $x = x_0 + x_1$ with $x_0 \in \dB^{\theta_0}_\infty$ and $x_1 \in \dB^{\theta_1}_\infty.$
Then 
\begin{align*}
2^{k\theta_0} \| \ddyad_k(A)x\|_X & \leq 2^{k\theta_0} \|\ddyad_k(A)x_0\|_X + 2^{k(\theta_0 - \theta_1)} 2^{k\theta_1} \|\ddyad_k(A)x_1\|_X \\
& \leq \|x_0\|_{\dB^{\theta_0}_\infty} + 2^{k(\theta_0-\theta_1)} \|x_1\|_{\dB^{\theta_1}_\infty}.
\end{align*}
Thus, if $q < \infty,$
\begin{align*}
\sum_{k \in \Z} 2^{q k \theta_\vartheta} \|\ddyad_k(A)x\|^q & = \sum_{k \in \Z} 2^{q k \vartheta (\theta_1 - \theta_0)} 2^{qk\theta_0} \|\ddyad_k(A)x\|^q \\
& \leq \sum_{k\in\Z} 2^{-q k \vartheta ( \theta_0 - \theta_1 )} K^q(2^{k(\theta_0-\theta_1)},x;\dB^{\theta_0}_\infty,\dB^{\theta_1}_\infty) \\
& \leq C_{\theta_0 - \theta_1} \int_0^\infty t^{-q \vartheta} K^q(t,x;\dB^{\theta_0}_\infty,\dB^{\theta_1}_\infty) \frac{dt}{t} \\
& \cong \|x\|^q_{(\dB^{\theta_0}_\infty,\dB^{\theta_1}_\infty)_{\vartheta,q}},
\end{align*}
where $K(t,x;\dB^{\theta_0}_\infty,\dB^{\theta_1}_\infty)$ stands for the usual $K$-functional of the real interpolation method, and the estimate of the sum against the integral follows from the fact that $K(t,x;\dB^{\theta_0}_\infty,\dB^{\theta_1}_\infty)$ is positive, increasing and concave, see \cite[3.1.3. Lemma]{BeL}.
If $q = \infty,$ then the above calculation must be modified in an obvious way.

In a second step, we show that if $1 \leq r \leq q,$ then $\dB^{\theta_\vartheta}_q \hookrightarrow (\dB^{\theta_0}_r,\dB^{\theta_1}_r)_{\vartheta,q}.$
We can assume without loss of generality that $\theta_0 > \theta_1.$
Then also $\theta_\vartheta > \theta_1.$
We have for $x \in \dB^{\theta_\vartheta}_q$
$\|x\|_{(\dB^{\theta_0}_r,\dB^{\theta_1}_r)_{\vartheta,q}} \cong \sum_{k \in \Z} 2^{-\vartheta q k (\theta_0 - \theta_1)} K^q(2^{k(\theta_0-\theta_1)},x;\dot{B}^{\theta_0}_r,\dB^{\theta_1}_r).$
For $k \in \Z$ fixed, we choose the decomposition $x = x_0 + x_1$ with $x_0 = (\sum_{j= - \infty}^k \ddyad_j)(A)x$ and $x_1 = (\sum_{j=k+1}^\infty \ddyad_j)(A)x.$
Note that $(\sum_{j= - \infty}^k \ddyad_j)(A)$ and $(\sum_{j=k+1}^\infty \ddyad_j)(A)$ are bounded on $\dot{X}_m$ for any $m \in \R.$
Indeed, right before the statement of the proposition, we have shown that $\dyad_0(A)$ is bounded on $X.$
Thus, e.g. for $k > 0,$ also $(\sum_{j= - \infty}^k \ddyad_j)(A) = \dyad_0(A) + \sum_{j=1}^k \ddyad_j(A)$ is bounded on $X,$
and $(\sum_{j=k+1}^\infty \ddyad_j)(A) = \Id_X - (\sum_{j= - \infty}^k \ddyad_j)(A)$ is bounded on $X.$
Then it is easy to check that they are also bounded on $\dot{X}_m.$
Note that by Proposition \ref{Prop Soaloc calculus}, $\ddyad_l(A) (\sum_{j=-\infty}^k \ddyad_j(A)) = \ddyad_l(A) \sum_{j \leq k:|j-l|\leq 1} \ddyad_j(A).$
We have
\[ \|x_0\|_{\dB^{\theta_0}_r}^r = \sum_{l \in \Z} 2^{l \theta_0 r} \| \ddyad_l(A) \sum_{j=-\infty}^k \ddyad_j(A) x \|^r
\lesssim \sum_{l=-\infty}^{k+1} 2^{l \theta_0 r} \| \ddyad_l(A) x \|^r \]
and
\[ \|x_1\|_{\dB^{\theta_1}_r}^r = \sum_{l \in \Z} 2^{l \theta_1 r} \| \ddyad_l(A) \sum_{j=k+1}^\infty \ddyad_j(A) x \|^r
\lesssim \sum_{l=k}^\infty 2^{l \theta_1 r} \| \ddyad_l(A) x \|^r. \]
Thus, if $q < \infty,$
\[ \sum_{k \in \Z} 2^{-\vartheta q k (\theta_0 - \theta_1)} K^q(2^{k(\theta_0 - \theta_1)},x;\dB^{\theta_0}_r,\dB^{\theta_1}_r)
\lesssim \sum_{k \in \Z} 2^{q k \theta_\vartheta} \left[ \sum_{l=-\infty}^{k+1} 2^{(l-k)\theta_0 r} \|\ddyad_l(A)x\|^r
+ \sum_{l=k}^\infty 2^{(l-k)\theta_1 r} \|\ddyad_l(A)x\|^r \right]^{\frac{q}{r}}. \]
Choose now $\theta_1 < \kappa_1 < \theta_\vartheta < \kappa_0 < \theta_0.$
Apply two times H\"older's inequality with $\frac{r}{q} + \frac{r}{\sigma} = 1$  (modification below if $r = q,\,\sigma = \infty$).
This gives, interchanging summation over $k$ and $l$ after the second estimate,
\begin{align*}
&\sum_{k \in \Z} 2^{-\vartheta q k (\theta_0 - \theta_1)} K^q(2^{k(\theta_0 - \theta_1)},x;\dB^{\theta_0}_r,\dB^{\theta_1}_r) \lesssim \sum_{k \in \Z} 2^{q k (\theta_\vartheta - \theta_0)} \left[ \sum_{l=-\infty}^{k+1} 2^{l\sigma(\theta_0 - \kappa_0)} \right]^{\frac{q}{\sigma}} \left[ \sum_{l=-\infty}^{k+1} 2^{l\kappa_0 q} \| \ddyad_l(A)x\|^q \right] \\
& + \sum_{k\in\Z} 2^{q k (\theta_\vartheta - \theta_1)} \left[ \sum_{l=k}^\infty 2^{l \sigma (\theta_1 - \kappa_1)} \right]^{\frac{q}{\sigma}}
\left[ \sum_{l=k}^\infty 2^{l \kappa_1 q} \| \ddyad_l(A)x\|^q \right] \\
& \lesssim \sum_{l \in \Z} 2^{l \kappa_0 q} \|\ddyad_l(A)x\|^q \sum_{k=l-1}^\infty 2^{q k(\theta_\vartheta - \theta_0)} 2^{kq(\theta_0 - \kappa_0)} + \sum_{l \in \Z} 2^{l \kappa_1 q} \|\ddyad_l(A)x\|^q \sum_{k=-\infty}^l 2^{q k(\theta_\vartheta - \theta_1)} 2^{kq(\theta_1 - \kappa_0)} \\
& \cong \sum_{l\in\Z} 2^{l \kappa_0 q} 2^{lq (\theta_\vartheta - \kappa_0)} \|\ddyad_l(A)x\|^q +  \sum_{l\in\Z} 2^{l \kappa_1 q} 2^{lq (\theta_\vartheta - \kappa_1)} \|\ddyad_l(A)x\|^q \\
& \cong \|x\|^q_{\dB^{\theta_\vartheta}_q}.
\end{align*}
If $q = \infty,$ then the above calculation has to be adapted in a straightforward way.
Now combining both steps, we have, since $\dB^{\theta_k}_{q_0} \hookrightarrow \dB^{\theta_k}_{q_1}$ for $k=0,1$ and $q_0 \leq q_1,$ and $(Y,Z)_{\vartheta,q} \hookrightarrow (Y_1,Z_1)_{\vartheta,q}$ for $Y \hookrightarrow Y_1$ and $Z \hookrightarrow Z_1,$
\[ \dB^{\theta_\vartheta}_q \hookrightarrow (\dB^{\theta_0}_r,\dB^{\theta_1}_r)_{\vartheta,q} \hookrightarrow (\dB^{\theta_0}_{q_0},\dB^{\theta_1}_{q_1})_{\vartheta,q} \hookrightarrow (\dB^{\theta_0}_\infty,\dB^{\theta_1}_\infty)_{\vartheta,q} \hookrightarrow  \dB^{\theta_\vartheta}_q. \]
This shows the statement on the homogeneous Besov spaces.
The proof for the spaces $\B^\theta_q$ is similar, see also \cite[2.4.2]{Triea}.
\end{proof}

\begin{thm}\label{Thm Besov identification}
Let $A$ be a $0$-sectorial operator with $\Ma_1$ calculus.
Then we have the following identifications of the real interpolation of fractional domain spaces:
\begin{align*}
(\dot{X}_{\theta_0},\dot{X}_{\theta_1})_{\vartheta,q} & = \dB^{\theta_\vartheta}_q \quad (\theta_0 \neq \theta_1,\: \theta_\vartheta = \theta_1 \vartheta + \theta_0 (1-\vartheta)) \\
\intertext{and}
({X}_{\theta_0},{X}_{\theta_1})_{\vartheta,q} & = \B^{\theta_\vartheta}_q \quad (\theta_0,\theta_1 \geq 0,\: \theta_0 \neq \theta_1,\: \theta_\vartheta = \theta_1 \vartheta + \theta_0 (1-\vartheta)).
\end{align*}
\end{thm}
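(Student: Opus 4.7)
The plan is to sandwich the fractional domain space between two Besov-type spaces with equal smoothness index and then invoke Proposition \ref{Prop Besov interpolation scale} together with monotonicity of the real interpolation functor. Specifically, I aim to prove
\[
\dB^\theta_1 \hookrightarrow \dot{X}_\theta \hookrightarrow \dB^\theta_\infty
\]
for every $\theta \in \R$, and analogously $\B^\theta_1 \hookrightarrow X_\theta \hookrightarrow \B^\theta_\infty$ for $\theta \geq 0$. Once these are established, the real interpolation functor is monotone in each component, so
\[
\dB^{\theta_\vartheta}_q = (\dB^{\theta_0}_1,\dB^{\theta_1}_1)_{\vartheta,q} \hookrightarrow (\dot{X}_{\theta_0},\dot{X}_{\theta_1})_{\vartheta,q} \hookrightarrow (\dB^{\theta_0}_\infty,\dB^{\theta_1}_\infty)_{\vartheta,q} = \dB^{\theta_\vartheta}_q
\]
by Proposition \ref{Prop Besov interpolation scale}, which gives the homogeneous identification; the inhomogeneous case follows identically.

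The embeddings are obtained by exploiting the scaling invariance of the $\Ma_1$ calculus. For the right embedding, I introduce $h(s) = s^{-\theta}\ddyad_0(s)$, which is smooth and compactly supported in $[\tfrac12,2]$, hence lies in $\Ma_1$. From $\ddyad_n(t) = 2^{-n\theta} t^\theta h(2^{-n}t)$ and the composition rules of Proposition \ref{Prop Soaloc calculus}(6), I get
\[
\ddyad_n(A) x = 2^{-n\theta}\, h(2^{-n}A)\, A^\theta x \qquad (x \in D(A^\theta)),
\]
so $\|\ddyad_n(A)x\| \leq C\,2^{-n\theta}\|A^\theta x\|$ uniformly in $n$, yielding $\dot{X}_\theta \hookrightarrow \dB^\theta_\infty$. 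For the left embedding, I use $g(s) = s^\theta \ddyad_0(s) \in \Ma_1$, the identity $t^\theta \ddyad_n(t) = 2^{n\theta} g(2^{-n}t)$ and the localization trick $g(2^{-n}\cdot) = g(2^{-n}\cdot)\tequi_n$ (where $\tequi_n = \sum_{k=-1}^1 \ddyad_{n+k}$) to obtain
\[
A^\theta \ddyad_n(A)x = 2^{n\theta}\, g(2^{-n}A)\, \tequi_n(A)\, x,
\]
hence $\|A^\theta \ddyad_n(A)x\| \leq C\, 2^{n\theta}\, \|\tequi_n(A) x\|$. Summing over $n$, for $x \in \dB^\theta_1$ the series $\sum_n A^\theta \ddyad_n(A) x$ converges absolutely in $X$; since $\sum_n \ddyad_n(A) x = x$ in the ambient space $\dot{X}_{-N}+\dot{X}_N$ and $A^\theta$ is closed on this space, one concludes $x \in D(A^\theta)$ with $\|A^\theta x\| \lesssim \|x\|_{\dB^\theta_1}$, so $\dB^\theta_1 \hookrightarrow \dot{X}_\theta$.

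The inhomogeneous analogue proceeds identically, except that for $n=0$ the function $t^\theta \dyad_0(t)$ must be verified to lie in $\Ma_1$; this is elementary since $\dyad_0$ is constant equal to $1$ near $0$ and $\theta \geq 0$, so the Mihlin condition $\sup_t |t^k (t^\theta\dyad_0(t))^{(k)}|<\infty$ is immediate, and the additional $\|x\|$-term in the $X_\theta$-norm is controlled by $\sum_{n\geq 0}\|\dyad_n(A)x\| \leq \sum_{n\geq 0} 2^{n\theta}\|\dyad_n(A)x\|$. The step I expect to require the most care is justifying the absolute convergence/closedness argument in the container space $\dot{X}_{-N}+\dot{X}_N$ for elements of $\dB^\theta_1$ which a priori only live in that completion; the key technical input is that each $\ddyad_n(A)$ is bounded between the fractional scales (as recorded just before Proposition \ref{Prop Besov interpolation scale}), so the partial sums $x_m = \sum_{|k| \leq m}\ddyad_k(A)x$ lie in $D_A \subset D(A^\theta)$ and one may apply closedness of $A^\theta$ to the pairs $(x_m, A^\theta x_m)$.
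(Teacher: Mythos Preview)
Your approach is essentially the same as the paper's: both establish the sandwich $\dB^\theta_1 \hookrightarrow \dot{X}_\theta \hookrightarrow \dB^\theta_\infty$ via the dilation invariance $\|\lambda^{\pm\theta}\ddyad_n(\lambda)\|_{\Ma_1} = 2^{\pm n\theta}\|\lambda^{\pm\theta}\ddyad_0(\lambda)\|_{\Ma_1}$, and then feed this into Proposition~\ref{Prop Besov interpolation scale}. The only cosmetic difference is that the paper phrases the interpolation step via the Reiteration theorem while you invoke monotonicity of $(\cdot,\cdot)_{\vartheta,q}$ directly; these are equivalent here. Your extra care with the closedness argument on $\dot{X}_{-N}+\dot{X}_N$ is fine but not strictly needed, since the paper simply works on the dense subspace $D_A$ of $\dB^\theta_1$ and extends by density.
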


\begin{proof}
We show that $\dB^\theta_1 \subset \dot{X}_\theta \subset \dB^\theta_{\infty}.$
Then by Proposition \ref{Prop Besov interpolation scale} and the Reiteration theorem \cite[Theorem 3.5.3]{BeL} it follows that 
$(\dot{X}_{\theta_0},\dot{X}_{\theta_1})_{\vartheta,q} = (\dB^{\theta_0}_{q_0},\dB^{\theta_1}_{q_1})_{\vartheta,q} = \dB^{\theta_\vartheta}_q$
for $\theta_0 \neq \theta_1,$ $\theta_\vartheta = \theta_1 \vartheta + \theta_0 (1-\vartheta)$ and $q \in [1,\infty].$
For $x$ belonging to the dense subspace $D_A$ of $\dB^\theta_1,$ we have
\[ \| A^\theta x \| \leq \sum_{n \in \Z} \|A^\theta \ddyad_n(A) x\| = \sum_{n \in \Z} \| A^\theta \widetilde{\ddyad_n}(A) \ddyad_n(A) x \|
 \leq \sup_{k \in \Z} 2^{-k\theta} \|\lambda^\theta \widetilde{\ddyad_k}(\lambda)\|_{\Ma} \sum_{n \in \Z} 2^{n \theta} \|\ddyad_n(A) x\|,
\]
and $2^{-k\theta} \|\lambda^\theta \widetilde{\ddyad_k}(\lambda)\|_{\Ma} = 2^{-k\theta} \| \lambda^{\theta} \widetilde{\ddyad_0}(2^{-k}\lambda)\|_{\Ma} 
= 2^{-k\theta} \| (2^k \lambda)^\theta \widetilde{\ddyad_0}(\lambda) \|_{\Ma} = \|\lambda^\theta \widetilde{\ddyad_0}(\lambda) \|_{\Ma} < \infty.$
This shows that $\dB^{\theta}_1 \subset \dot{X}_\theta.$
Conversely, let $x \in D(A^\theta).$
Then 
\[2^{n \theta} \|\ddyad_n(A) x\| = 2^{n \theta} \|A^{-\theta} \ddyad_n(A) A^\theta x\| \leq \sup_{n \in \Z} 2^{n \theta} \| \lambda^{-\theta} \ddyad_n(\lambda) \|_{\Ma} \| A^\theta x \|, \]
and, similarly to the above, $2^{n \theta} \| \lambda^{-\theta} \ddyad_n(\lambda) \|_{\Ma} = \|\lambda^{-\theta} \ddyad_0(\lambda)\|_{\Ma} < \infty.$
This shows that $\dot{X}_\theta \subset \dB^\theta_\infty.$
The proof for the inhomogeneous spaces is similar, using the same estimate for $\|\dyad_0(A)\|$ as before Proposition \ref{Prop Besov interpolation scale}.
\end{proof}

The analogous statement of the above theorem for a continuous parameter reads as follows.

\begin{thm}\label{Thm Besov identification continuous}
Let $A$ have an $\Ma_1$ calculus, $\theta_0,\: \theta_1 \in \R$ with $\theta_0 < \theta_1,\: s \in (0,1),$ $q \in [1,\infty],$ and $\theta = (1-s) \theta_0 + s \theta_1.$
Furthermore, let $f : (0,\infty) \to \C$ be a function with $\sum_{k \in \Z}\|f \ddyad_0(2^{-k}\cdot)\|_{\Ma_1} 2^{-k\theta} < \infty$ and $f^{-1} \ddyad_0 \in \Ma_1.$
Then, with standard modification for $q = \infty,$ the following hold.
\begin{enumerate}
\item For the real interpolation space $\dB^\theta_q = (\dot{X}_{\theta_0}, \dot{X}_{\theta_1} )_{s,q},$ we have the norm equivalence
\[ \|x\|_{\dB^\theta_q} \cong \| x \|_{s,q} \cong \left( \int_0^\infty t^{-\theta q} \| \ddyad_0(tA) x \|^q \frac{dt}{t} \right)^{\frac1q} \cong \left(\int_0^\infty t^{-\theta q} \|f(tA)x\|^q \frac{dt}{t} \right)^{\frac1q}.\]
\item For the real interpolation space $\B^\theta_q,$ we have the norm equivalence
\[ \| x \|_{\B^\theta_q} \cong \left( \int_0^1 t^{-\theta q} \| \ddyad_0(tA) x \|^q \frac{dt}{t} \right)^{\frac1q} + \| \dyad_0(A)x \|.\]
\end{enumerate}
\end{thm}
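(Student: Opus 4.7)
The plan is to reduce parts (1) and (2) to Theorem \ref{Thm Besov identification} by proving the chain of equivalences
\[ \|x\|_{s,q} \cong \left(\int_0^\infty t^{-\theta q}\|\ddyad_0(tA)x\|^q\,\tfrac{dt}{t}\right)^{1/q} \cong \left(\int_0^\infty t^{-\theta q}\|f(tA)x\|^q\,\tfrac{dt}{t}\right)^{1/q} \]
in two separate steps. For the first equivalence, I would exploit the dyadic decomposition $(0,\infty) = \bigcup_{n \in \Z}[2^{-n-1}, 2^{-n}]$: for $t$ in this interval the function $\lambda \mapsto \ddyad_0(t\lambda)$ is supported within $[2^{n-1},2^{n+2}]$, overlapping only the supports of $\ddyad_k$ with $|k-n|$ bounded by a fixed constant. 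The $\Ma_1$ calculus then yields $\|\ddyad_0(tA)x\| \lesssim \sum_{|k-n|\leq 3}\|\ddyad_k(A)x\|$ uniformly in $t$ and $n$. Multiplying by $t^{-\theta}\cong 2^{n\theta}$, taking $q$-th powers and summing in $n$ gives one direction. For the reverse, I would apply Lemma \ref{Lem McIntosh Integral} to $\psi:=c^{-1}\ddyad_0$ with $c = \int_0^\infty \ddyad_0(s)\,\tfrac{ds}{s}$ to obtain the reproducing formula $x = \int_0^\infty \psi(tA)x\,\tfrac{dt}{t}$ on $D_A$. Applying $\ddyad_n(A)$ and using the support identity $\ddyad_n(\lambda)\ddyad_0(t\lambda) = 0$ outside a window $t \in [a 2^{-n}, b 2^{-n}]$ collapses the integral to that window; a H\"older estimate on this bounded-$dt/t$-length interval gives $\|\ddyad_n(A)x\|^q \lesssim \int_{a2^{-n}}^{b 2^{-n}}\|\ddyad_0(tA)x\|^q\,\tfrac{dt}{t}$, and summation against $2^{n\theta q}$ with Fubini concludes.

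For the second equivalence, the direction $\|\ddyad_0(tA)x\| \leq C\|f(tA)x\|$ follows by factoring $\ddyad_0(tA) = (f^{-1}\ddyad_0)(tA)f(tA)$, legitimate because $f^{-1}\ddyad_0 \in \Ma_1$ by hypothesis and the $\Ma_1$ norm is scale invariant up to a universal constant, so $\|(f^{-1}\ddyad_0)(t\cdot)\|_{\Ma_1}$ is bounded uniformly in $t$. The reverse direction is the core of the matter: decompose $f = \sum_{k \in \Z} f\ddyad_k$ so that $(f\ddyad_k)(tA) = g_k(2^{-k}tA)$ with $g_k(\lambda) := f(2^k\lambda)\ddyad_0(\lambda)$, supported in $[1/2,2]$ and satisfying $\|g_k\|_{\Ma_1} = \|f\ddyad_0(2^{-k}\cdot)\|_{\Ma_1}$. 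Since $\widetilde{\ddyad_0}\equiv 1$ on $[1/2,2]$ we have $g_k=g_k\widetilde{\ddyad_0}$, and the $\Ma_1$ calculus yields $\|g_k(2^{-k}tA)x\| \lesssim \|g_k\|_{\Ma_1}\|\ddyad_0(2^{-k}tA)x\|$ up to a bounded number of neighbouring dilates, harmless after enlarging $\widetilde{\ddyad_0}$. The substitution $u = 2^{-k}t$ followed by Minkowski's inequality in $L^q(\R_+, dt/t)$ summed over $k$ produces
\[ \left(\int_0^\infty t^{-\theta q}\|f(tA)x\|^q\,\tfrac{dt}{t}\right)^{1/q} \lesssim \left(\sum_{k \in \Z}\|f\ddyad_0(2^{-k}\cdot)\|_{\Ma_1}\,2^{-k\theta}\right)\left(\int_0^\infty u^{-\theta q}\|\ddyad_0(uA)x\|^q\,\tfrac{du}{u}\right)^{1/q}, \]
finite by assumption. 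The case $q = \infty$ requires only routine modifications.

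Part (2) follows the same blueprint restricted to $t \in (0,1]$: the term $\|\dyad_0(A)x\|$ absorbs the low-frequency contribution that the truncated integral misses, and the decomposition $x = \dyad_0(A)x + \sum_{n\geq 1}\ddyad_n(A)x$ replaces the bilateral telescoping of Part (1), using that $\dyad_0(A) \in B(X)$ as established just before Proposition \ref{Prop Besov interpolation scale}. The main obstacle I anticipate is the quantitative scale invariance of the $\Ma_1$ norm under arbitrary (not merely dyadic) dilations $g \mapsto g(t\cdot)$: while the $\Ma$ norm is manifestly invariant, $\Ma_1$ is a sum over dyadic bumps that slide rather than vanish under rescaling, so one has to verify $\|g(t\cdot)\|_{\Ma_1} \leq C\|g\|_{\Ma_1}$ uniformly in $t \in (0,\infty)$, and check that boundary indices in $f = \sum_k f\ddyad_k$ do not spoil the summability condition on $f$. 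Once this bookkeeping is settled, all four norms in the statement are mutually equivalent, and the identification with the real interpolation space is supplied by Theorem \ref{Thm Besov identification}.
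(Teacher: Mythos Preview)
Your overall architecture matches the paper's: both equivalences in (1) are proved by the same support-overlap bookkeeping you describe, and the $f$-equivalence via the factorisation $\ddyad_0=(f^{-1}\ddyad_0)\cdot f$ in one direction and the dyadic decomposition $f=\sum_k f\ddyad_k$ plus Minkowski in the other is exactly what the paper does. Part (2) is also handled the same way.

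There is, however, one genuine gap. For the reverse inequality in the first equivalence you invoke Lemma~\ref{Lem McIntosh Integral} to obtain the reproducing formula $x=\int_0^\infty \psi(tA)x\,\tfrac{dt}{t}$. That lemma, as stated in the paper, requires $A$ to have a bounded $\Ma$ calculus, whereas the present theorem assumes only the weaker $\Ma_1$ calculus; so the citation is illegitimate as it stands. One can in fact push the reproducing formula through under $\Ma_1$ for $x\in D_A$ (the integrand has compact $t$-support and $t\mapsto \ddyad_0(t\cdot)\rho$ is $\Ma_1$-continuous for $\rho\in C^\infty_c$), but this requires redoing the proof of Lemma~\ref{Lem McIntosh Integral} rather than citing it.

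The paper avoids this detour entirely with a simpler, symmetric version of the same overlap argument you used in the forward direction: for $t\in[2^n,2^{n+1}]$ one has $\sum_{|k|\le 2}\ddyad_0(t2^k\cdot)\equiv 1$ on $\supp\ddyad_0(2^n\cdot)$, hence
\[
\|\ddyad_0(2^nA)x\|=\Bigl\|\sum_{|k|\le 2}\ddyad_0(t2^kA)\,\ddyad_0(2^nA)x\Bigr\|\lesssim \sum_{|k|\le 2}\|\ddyad_0(t2^kA)x\|,
\]
which needs only the $\Ma_1$ calculus. Averaging this over $t\in[2^n,2^{n+1}]$ and summing over $n$ gives the reverse inequality directly. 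This is both shorter and stays strictly within the available hypotheses, so I would recommend replacing your McIntosh-based step with it. Your concern about scale invariance of $\|\cdot\|_{\Ma_1}$ is a non-issue: dyadic dilations leave it exactly invariant, and a general dilation differs from a dyadic one by a factor in $[1,2)$, which only shifts the bump supports by at most one index.
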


\begin{rem}\label{Rem Besov continuous}
\begin{enumerate}~
\item If $A$ is not injective then we refer to Remark \ref{Rem not injective} how to modify $\|x\|_{s,q}$ and $\|x\|_{\B^\theta_q}$ in 1. and 2. of the Theorem in this case.
\item If $\theta > 0,$ we can replace in 2. $\|\dyad_0(A)x\|$ by $\|x\|$ and/or replace the integration from $\int_0^1$ to $\int_0^a$ with $a \in [1,\infty].$
If we choose $a = \infty,$ we can also replace in the integrand $\|\ddyad_0(tA)x\|$ by $\|f(tA)x\|,$ where $f$ satisfies the assumptions of Theorem \ref{Thm Besov identification continuous}.
\item Common choices for $f$ are functions in $\HI_0(\Sigma_\sigma)$, e.g. $f(\lambda) = \lambda^a e^{-\lambda}$ for $a > \theta,$
or $f(\lambda) = \lambda^a (1 + \lambda)^{-b},$ with $0 < a - \theta < b < \infty.$
In the latter cases, one obtains square functions of the form
\begin{align*} & \left( \int_0^\infty t^{-\theta q} \|(tA)^a e^{-tA} x\|^q \frac{dt}{t} \right)^{\frac1q} \\
\intertext{or}
& \left(\int_0^\infty t^{-\theta q} \|t^a A^a (1+tA)^{-b}x\|^q \frac{dt}{t} \right)^{\frac1q}.
\end{align*}
\end{enumerate}
\end{rem}

\begin{proof}[Proof of Theorem \ref{Thm Besov identification continuous}]
(1)
To simplify notations, we assume that $\theta_0 = 0, \: \theta_1 = 1.$
Let $n \in \Z$ such that $t \in [2^n, 2^{n+1} [.$
Then $\| \ddyad_0(tA) x \| = \| \sum_{k = n - 1}^{n+2} \ddyad_0(2^k A) \ddyad_0(tA) x \|.$
As $\| \ddyad_0(tA) x \| \leq C \| \ddyad_0 \|_{\Ma} \| x \|,$ one finds $\|\ddyad_0(tA) x \| \leq C \sum_{|k-n| \leq 2} \| \ddyad_0(2^k A) x \|,$ and thus
\begin{align*}
\left( \int_0^\infty t^{-\theta q} \| \ddyad_0(tA) x \|^q \frac{dt}{t} \right)^{\frac1q} & \cong \left( \sum_{n \in \Z} \int_{2^n}^{2^{n+1}} 2^{-n \theta q} \| \ddyad_0(tA) x \|^q \frac{dt}{t} \right)^{\frac1q} \\
& \lesssim \left( \sum_{n \in \Z} 2^{-n \theta q} \sum_{|k-n| \leq 2} \| \ddyad_0(2^k A)x \|^q \int_{2^n}^{2^{n+1}} 1 \frac{dt}{t} \right)^{\frac1q} \\
& \leq \left( 5 \sum_{n \in \Z} 2^{-n \theta q } \| \ddyad_0(2^n A) x \|^q \right)^{\frac1q} \cdot 2^{2 |\theta|}.
\end{align*}
On the other hand, one has
$ \| \ddyad_0(2^n A) x \| \lesssim \| \sum_{k = - 2}^2 \ddyad_0(t 2^k A) \ddyad_0(2^n A)x \| $ for any $t \in [2^n, 2^{n+1}].$
Thus, $2^{-n \theta q} \| \ddyad_0(2^n A) x \|^q \lesssim \int_{2^n}^{2^{n+1}} t^{-\theta q} \sum_{k = - 2}^2 \| \ddyad_0 (t2^k A)x \|^q \frac{dt}{t},$ so that
\begin{align*}
\left( \sum_{n \in \Z} 2^{-n \theta q} \| \ddyad_0(2^n A)x\|^q \right)^{\frac1q} & \lesssim \left( \int_0^\infty t^{-\theta q} \sum_{k=-2}^2 \| \ddyad_0(t2^k A) x\|^q \frac{dt}{t} \right)^{\frac1q} \\
& \cong \left( \int_0^\infty t^{-\theta q} \| \ddyad_0(tA) x \|^q \frac{dt}{t} \right)^{\frac1q}.
\end{align*}
Similarly, one obtains the second norm equivalence in 1. for the case $q = \infty.$\\

We now show that for $x \in \dB^\theta_q,$ we have
\begin{equation}
\label{equ proof Besov continuous}
\int_0^\infty \left( t^{-\theta} \|\ddyad_0(tA) x\| \right)^q \frac{dt}{t} \cong \int_0^\infty \left( t^{-\theta} \|f(tA) x\| \right)^q \frac{dt}{t}
\end{equation}
under the assumptions on $f$ from the Theorem.
For the inequality ``$\lesssim$'', we use that $\lambda \mapsto \frac{1}{f(\lambda)} \ddyad_0(\lambda)$ has finite $\Ma_1$ norm.
Thus,
\begin{align*}
\int_0^\infty & \left( t^{-\theta} \|\ddyad_0(tA) x\| \right)^q \frac{dt}{t} = \int_0^\infty \left( t^{-\theta} \|(f^{-1}(\lambda) \ddyad_0(\lambda))|_{\lambda = tA} f(tA)x\| \right)^q \frac{dt}{t} \\
& \lesssim \int_0^\infty \left( t^{-\theta} \|f(tA)x\| \right)^q \frac{dt}{t}.
\end{align*}
For the reverse inequality ``$\gtrsim$'', we estimate
\begin{align*}
& \left\{ \int_0^\infty \left( t^{-\theta} \|f(tA)x\|\right)^q \frac{dt}{t} \right\}^{\frac{1}{q}} \leq \sum_{k \in \Z} \left\{ \int_0^\infty \left( t^{-\theta} \|\ddyad_k(tA) f(tA)x\| \right)^q \frac{dt}{t} \right\}^{\frac{1}{q}} \\
& = \sum_{k \in \Z} \left\{ \int_0^\infty \left( t^{-\theta} \| \widetilde{\ddyad_k}(\lambda) f(\lambda)|_{\lambda = tA} \ddyad_k(tA) f \| \right)^q \frac{dt}{t} \right\}^{\frac{1}{q}} \\
& \leq \sum_{k \in \Z} \|f(\lambda) \widetilde{\ddyad_0}(2^{-k} \lambda)\|_{\Ma_1} \left\{  \int_0^\infty \left( t^{-\theta} \|\ddyad_k(tA) f \| \right)^q \frac{dt}{t} \right\}^{\frac{1}{q}} \\
& = \sum_{k \in \Z} \|f(\lambda) \widetilde{\ddyad_0}(2^{-k} \lambda)\|_{\Ma_1} \left\{ \int_0^\infty \left( t^{-\theta} 2^{-k\theta} \|\ddyad_0(tA) f \| \right)^q \frac{dt}{t} \right\}^{\frac{1}{q}} \\
& = \sum_{k \in \Z} \|f(\lambda) \widetilde{\ddyad_0}(2^{-k} \lambda)\|_{\Ma_1} 2^{-k \theta} \left\{ \int_0^\infty \left( t^{-\theta} \|\ddyad_0(tA) f \| \right)^q \frac{dt}{t} \right\}^{\frac{1}{q}}.
\end{align*}
Using the assumptions on $f,$ we have shown \eqref{equ proof Besov continuous}.

(2)
Assume $q < \infty.$
Repeating the arguments of part (1), we get
\begin{align*}
\|x\|_{\B^\theta_q} & \lesssim \| \dyad_0(A) x \| + \left( \sum_{n = - \infty}^0 2^{-n \theta q} \| \ddyad_0(2^n A) x \|^q \right)^{\frac1q} \\
& \lesssim \| \dyad_0(A)x \| + \left( \int_0^1 t^{-\theta q} \| \ddyad_0(tA) x \|^q \frac{dt}{t} \right)^{\frac1q} \\
& \lesssim \|x\|_{\B^\theta_q} + \left( \sum_{n=0}^\infty 2^{n\theta q} \|\ddyad_n(A)x\|^q \right)^{\frac1q} \\
& \lesssim \|x\|_{\B^\theta_q}.
\end{align*}
The case $q = \infty$ is treated similarly.
\end{proof}

\begin{proof}[Proof of Remark \ref{Rem Besov continuous}]
2. 
Assume $\theta > 0.$
We have $\|\dyad_0(A)x\| \lesssim \|x\|$ and $\int_0^a t^{-\theta q} \|\ddyad_0(tA)x\|^q \frac{dt}{t} \leq \int_0^\infty t^{-\theta q} \|\ddyad_0(tA)x\|^q \frac{dt}{t}.$
On the other hand, since $\B^\theta_q \hookrightarrow X_{\theta_0} + X_{\theta_1} \hookrightarrow X,$ we have $\|x\| \lesssim \|x\|_{\B^\theta_q}.$
Also, 
\begin{align*}
\int_a^\infty t^{-\theta q} & \|\ddyad_0(tA)x\|^q \frac{dt}{t} = \int_a^\infty t^{-\theta q} \| \ddyad_0(tA) \sum_{k=0}^{N_a}\dyad_k(A)x\|^q \frac{dt}{t} \\
& \lesssim \| \ddyad_0\|_{\Ma_1} \int_a^\infty t^{-\theta q} \frac{dt}{t} \sum_{k=0}^{N_a}\|\dyad_k(A)x\| \\
& \lesssim \|x\|_{\B^\theta_q}^q,
\end{align*}
where we have used that $\theta > 0.$
Finally, the proof of Theorem \ref{Thm Besov identification continuous} part 1. shows that one can replace $\|\ddyad_0(tA)x\|$ by $\|f(tA)x\|$ in the integral, in the case $a = \infty.$

3. It is an easy matter to check with \eqref{Equ Classical Mihlin condition} that $\|\lambda^a e^{-\lambda} \widetilde{\ddyad_0}(2^{-k} \lambda\|_{\Ma_1} \lesssim \min(2^{ka},\exp(-c 2^k)),$ so that $\sum_{k \in \Z} \|\lambda^a e^{-\lambda} \widetilde{\ddyad_0}(2^{-k} \lambda\|_{\Ma_1} 2^{-k \theta} < \infty$ for $\theta < a.$
Likewise, the assumptions in Theorem \ref{Thm Besov identification continuous} are checked for $f(\lambda) = \lambda^a (1 + \lambda)^{-b}$ for $0 < a - \theta < b.$
%Note that for $N \in \N$ and $\theta < N,$ a similar formula as \eqref{equ proof BDY} holds where $tA e^{-tA}$ is replaced by $(tA)^N e^{-tA}.$
\end{proof}

Let $A$ be a $0$-sectorial operator having a bounded $\Ma_1$ calculus.
Let further $\theta \in \R$ and $q \in [1,\infty)$ and consider the real interpolation space $\dB^\theta_q.$
Let $R_\lambda :\: D_A \subset \dB^\theta_q \to \dB^\theta_q,\, x \mapsto (\lambda - A)^{-1} x$ which extends to a bounded operator with $\|R_\lambda\| \leq \|(\lambda - A)^{-1}\|_{X \to X}.$
It is easy to check that $\C \backslash (-\infty,0] \to B(\dB^\theta_q),\: \lambda \mapsto R_\lambda$ is a pseudo-resolvent.
As $R(R_\lambda) \supset D_A,$ $R_\lambda$ has dense range and consequently, is the resolvent of a closed and densely defined operator \cite[Theorem 9.3]{Pazy} which we denote by $\dot{A}.$
Furthermore, $\dot{A}$ is $0$-sectorial in $\dB^\theta_q$ and by \eqref{Equ Cauchy Integral Formula} and density arguments, $f(\dot{A})x = f(A)x$ for any $x \in D_A$ and $f \in \Ma.$
Similarly, for $\theta \geq 0,$ there is a $0$-sectorial operator $\widetilde{A}$ on $\B^\theta_q$ such that $f(\widetilde{A})x = f(A)x$ for any $x \in D_A$ and $f \in \Ma.$

\begin{thm}
Let $A$ have an $\Ma_1$ calculus.
Then $\dot{A}$ has a $\Ma$ calculus on $\dB^\theta_q$ for any $\theta \in \R$ and $q \in [1,\infty).$
%Further let $\widetilde{\Ma} = \{ f \in \Ma :\: f(t) = f(1) \text{ for }t \in (0,2) \}.$
%Then $\|f(\widetilde{A}_1)\|_{\B^\theta_q \to \B^\theta_q} \leq C \| f \|_{\Ma}$ for any $f \in \widetilde{\Ma},$ where $\theta \geq 0$ and $q \in [1,\infty).$
\end{thm}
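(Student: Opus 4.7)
The plan is to verify the defining inequality of Definition \ref{Def Line calculi} for $\dot{A}$, namely
\[ \|f(\dot{A}) x\|_{\dB^\theta_q} \lesssim \|f\|_{\Ma} \|x\|_{\dB^\theta_q}, \]
on a dense subset of test functions $f$ and a dense core of vectors $x$. Since $D_A$ is dense in $\dB^\theta_q$ for $q < \infty$, and since by construction $f(\dot{A})x = f(A)x$ for $x \in D_A$, it suffices to bound $\|f(A) x\|_{\dB^\theta_q}$ in terms of $\|f\|_{\Ma} \|x\|_{\dB^\theta_q}$ for $f$ in the class $\bigcap_{0 < \omega < \pi} \HI(\Sigma_\omega) \cap \Ma$ used in Definition \ref{Def Line calculi}; the estimate will then extend to all of $\Ma$ by density.

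The central observation is the product identity
\[ \ddyad_n(A) f(A) x = (\ddyad_n f)(A) \widetilde{\ddyad_n}(A) x, \]
which exploits $\widetilde{\ddyad_n} \cdot \ddyad_n = \ddyad_n$ together with multiplicativity of the calculus. The crucial step is controlling the Mihlin norm of the localized symbol. Since $\ddyad_n f$ is supported in the single dyadic interval $[2^{n-1}, 2^{n+1}]$, the $\Ma_1$ norm collapses to an $\Ma$ norm of finitely many overlapping pieces, and by Proposition \ref{Prop Elementary Mih Hor}(1) combined with translation invariance of $\Ba$ (which gives $\|\ddyad_n\|_{\Ma} = \|\ddyad_0\|_{\Ma}$ for all $n \in \Z$, since $(\ddyad_n)_e$ is just a translate of $(\ddyad_0)_e$) one obtains
\[ \|\ddyad_n f\|_{\Ma_1} \lesssim \|\ddyad_n f\|_{\Ma} \lesssim \|\ddyad_0\|_{\Ma} \|f\|_{\Ma}. \]
Applying the $\Ma_1$ calculus of $A$ to $\ddyad_n f$ yields $\|\ddyad_n(A) f(A) x\| \lesssim \|f\|_{\Ma} \|\widetilde{\ddyad_n}(A) x\|$, and raising to the $q$-th power, multiplying by $2^{n\theta q}$ and summing over $n$ gives
\[ \|f(A) x\|_{\dB^\theta_q}^q \lesssim \|f\|_{\Ma}^q \sum_{n \in \Z} 2^{n\theta q} \|\widetilde{\ddyad_n}(A) x\|^q \lesssim \|f\|_{\Ma}^q \|x\|_{\dB^\theta_q}^q, \]
where the last inequality is a shift-and-triangle argument using $\widetilde{\ddyad_n} = \sum_{k=-1}^1 \ddyad_{n+k}$.

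The main obstacle is the rigorous justification of the product formula $\ddyad_n(A) f(A) = (\ddyad_n f)(A)$ when $f$ is only assumed to lie in $\HI(\Sigma_\omega) \cap \Ma$ rather than in $\Ma_1$. To handle this I would invoke Lemma \ref{Lem HI dense in diverse spaces} to approximate $f$ in $\Ma$ by functions of the form $(f_e \ast \check{\Fdyad_k}) \circ \log$, each of which lies in $\HI(\Sigma_\omega) \cap \Ma_1$, so that multiplicativity of the $\Ma_1$ calculus combined with Proposition \ref{Prop Convergence Bes} transfers the product formula to the limit. The case $q = \infty$ is handled by the standard modification of the $\ell^q$ norm as a supremum, and the argument is otherwise identical.
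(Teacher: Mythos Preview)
Your proposal is correct and follows essentially the same approach as the paper: localize $f$ via $\ddyad_n$, use the product identity $\ddyad_n(A)f(A) = (f\ddyad_n)(A)\widetilde{\ddyad_n}(A)$, bound $\|(f\ddyad_n)(A)\|_{X\to X}$ by $\|f\ddyad_n\|_{\Ma_1} \lesssim \|f\|_{\Ma}$ via the $\Ma_1$ calculus, and sum in $\ell^q$. Your extended discussion of the product formula via approximation is more careful than the paper's terse treatment (which simply writes the identity for $f \in \Ma$ and $x \in D_A$), but the argument is the same; also note the theorem is only stated for $q \in [1,\infty)$, so your remark on $q=\infty$ is superfluous.
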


\begin{proof}
Let $f \in \Ma$ and $x \in D_A.$
Then
\begin{align*}
\| f(\dot{A})x \|_{\dB^\theta_q} & = \left( \sum_{n \in \Z} 2^{n \theta q} \|f(A) \ddyad_n(A) x\|^q \right)^{\frac1q} \\
& = \left( \sum_{n \in \Z} 2^{n \theta q} \|(f\ddyad_n)(A) \widetilde{\ddyad_n}(A) x\|^q \right)^{\frac1q} \\
& \leq \sup_{k \in \Z} \|(f\ddyad_k)(A)\|_{X \to X} \left( \sum_{n \in \Z} 2^{n \theta q} \|\widetilde{\ddyad_n}(A) x\|^q \right)^{\frac1q} \\
& \lesssim \sup_{k \in \Z} \|f \ddyad_k\|_{\Ma} \| x \|_{\dB^\theta_q} \\
& \lesssim \|f\|_{\Ma} \| x \|_{\dB^\theta_q}.
\end{align*}
%Thus the first part of the proposition follows from the density of $D_A$ in $\dB^\theta_q.$
%For the inhomogeneous case, we write $\widetilde{\dyad_n} = \sum_{k=n-1}^{n+1} \dyad_k$ for $k \geq 1$ and $\widetilde{\dyad_0} = \dyad_0 + \dyad_1,$ and estimate similarly
%\begin{align*}
%\| f(\widetilde{A}_1)x \|_{\B^\theta_q} & = \left( \sum_{n \in \N_0} 2^{n \theta q} \|(f\dyad_n)(A) \widetilde{\dyad_n}(A) x \|^q \right)^{\frac1q} \\
%& \lesssim \sup_{n \in \N_0} \| (f\dyad_n)(A) \|_{X \to X} \| x \|_{\B^\theta_q} \\
%& \lesssim \sup\{|f(1)| \: \|\dyad_0(A)\|,\: \|f \dyad_n\|_{\Ma} :\: n \geq 1 \} \| x \|_{\B^\theta_q} \\
%& \lesssim \|f\|_{\Ma} \|x\|_{\B^\theta_q}.
%\end{align*}
Thus the theorem follows.
\end{proof}

\begin{rem}
Let $A$ have an $\Ma_1$ calculus, $\theta \in \R$ and $q \in [1,\infty).$
The inhomogeneous Besov space $\B^\theta_q(A)$ of $A$ coincides with the homogeneous Besov space $\dB^\theta_q(A+1)$ of $A+1.$
Then $(A+1)\dot{\phantom{i}}$ has a $\Ma$ calculus on $\dB^\theta_q(A+1)$ by the above theorem.
Since $f(A+1) = f_1(A)$ for $f_1(t) = f(t+1),$ we have thus $\|f(\widetilde{A}) \|_{\B^\theta_q(A) \to \B^\theta_q(A)} \leq C \|f\|_{\Ma}$ for any $f\in\Ma$ with $\supp f \subset (1,\infty).$
Furthermore, to consider functional calculus for functions with full support in $(0,\infty),$ we make the following observation.
With a similar proof of the boundedness of $\dyad_0(A)$ before Proposition \ref{Prop Besov interpolation scale}, one can check that for a function $f \in C^k(-\epsilon,1)$ for some $k > \alpha$ and $\epsilon > 0,$ we have that $f(A) = f(0)e^{-A} + (f(\cdot)-f(0)e^{-\cdot})(A)$ is a bounded operator on $X$ and $f(\widetilde{A})$ is a bounded operator on $\B^\theta_q(A).$
\end{rem}

\section{Some Examples}\label{Sec 6 Examples}

The theory developed in Sections \ref{Sec 3 Spectral Decomposition} and \ref{Sec 5 Real Interpolation} gives a unified approach to various classes of operators.
Firstly, the following lemma gives sufficient conditions for the $\Ma$ calculus of some operator $A.$
We quote some convenient sources for the required $\Ma$ calculus without striving for the best possible parameter $\alpha.$

\begin{lem}\label{Lem Criteria Examples Ma calculus}
Let $(\Omega,\mu)$ be a $\sigma$-finite measure space and $A$ a self-adjoint positive operator on $L^2(\Omega).$
\begin{enumerate}
\item\label{it Lem Criteria Examples Ma calculus}Suppose that $\Omega$ is a homogeneous space of dimension $d \in \N$ and that the $C_0$-semigroup generated by $-A$ has an integral kernel satisfying the 
upper Gaussian estimate \cite[Assumptions 2.1 and 2.2]{DuOS}.
Then, on $X = L^p(\Omega)$ for $1 < p < \infty,$ the injective part $A_1$ of $A$ from the decomposition in Subsection \ref{Subsec A B} is $0$-sectorial and has a $\Ma$ calculus for $\alpha > \lfloor \frac{d}{2} \rfloor + 1.$
\item Suppose that on $X = L^p(\Omega)$ for $1 < p < \infty,$ $A$ is injective, has a bounded $\HI$ calculus and the imaginary powers satisfy
$\|A^{it}\|_{p \to p} \lesssim ( 1 + |t| )^\beta$ for $t \in \R$ and some $\beta \geq 0.$
Then $A$ is $0$-sectorial and has a $\Ma$ calculus on $X$ for $\alpha > \beta + 1.$
\item Suppose that $\Omega$ is a homogeneous space of dimension $d \in \N$ and that the $C_0$-semigroup generated by $-A$ satisfies generalized Gaussian estimates \cite[(GGE)]{Bluna}.
Then $A$ has a $\Ma$ calculus for $\alpha > \frac{d}{2} + \frac12,$ on $L^p(\Omega)$ for $p \in (p_0,p_0')$ for some $p_0 \in [1,2).$
This has been refined in \cite{Kr,Uhl,COSY}.
\end{enumerate}
Note that in the literature of spectral multipliers, $\Ma$ calculus is very often defined by considering the self-adjoint functional calculus and then extrapolating to $L^p.$
This calculus coincides with Definition \ref{Def Line calculi} (restricted to $\overline{R(A)}$ in \eqref{it Lem Criteria Examples Ma calculus}).
\end{lem}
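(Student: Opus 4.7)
The plan is to derive each of the three parts from a known spectral multiplier theorem cited in the statement, and to bridge the gap between the classical Mihlin condition \eqref{Equ Classical Mihlin condition} used in those references and the Besov-based $\Ma$ norm of Definition \ref{Def Line calculi}. The bridge is Proposition \ref{Prop Elementary Mih Hor}(2): a function $f \in \Ma$ satisfies $f \in C^n_b$ for any $n < \alpha$ and its dyadic pieces $f \ddyad_n$ automatically have uniformly bounded classical Mihlin norm up to order $\lfloor \alpha \rfloor$, so a sharp Mihlin-type spectral multiplier theorem can always be recast as a $\Ma$ bound with the threshold $\alpha$ exceeding the required number of derivatives by $1$ (the extra $1$ coming from the $\ell^1$-assembly of dyadic pieces, just as in the passage from $\Ma$ to $\Ma_1$ in Proposition \ref{Prop Resolvent Ma0}).

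For (1) I would invoke the main theorem of Duong--Ouhabaz--Sikora: under the upper Gaussian estimate on a homogeneous space of dimension $d$, any $f$ with compact support in a dyadic annulus satisfies $\|f(A)\|_{L^p \to L^p} \lesssim \|f(2^k \cdot)\|_{C^s}$ for $s > d/2$. A dyadic decomposition $f = \sum_n f \ddyad_n$, combined with the embedding above, then gives $\|f(A)\| \lesssim \|f\|_{\Ma}$ for $\alpha > \lfloor d/2 \rfloor + 1$. Note that in the DOS framework the calculus is first defined on $L^2$ by the spectral theorem for the self-adjoint $A$ and then extrapolated to $L^p$; since the spectral theorem calculus automatically vanishes on $N(A)$, the produced bound lives on the injective part $A_1$ from \eqref{Equ sectorial injective}, which is exactly the statement of (1). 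For (3), I would replace DOS by the Blunck theorem under generalized Gaussian estimates (and its sharpenings in Kriegler, Uhl, COSY), which yields the better threshold $\alpha > d/2 + 1/2$ on the range $p \in (p_0, p_0')$; the $\Ma$ conversion is identical.

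For (2) I would use the standard Cowling--Meda Mellin inversion argument. Writing $f_e(s) = f(e^s)$ one has formally $f(A) = (2\pi)^{-1} \int_{\R} \widehat{f_e}(t) A^{it} dt$, so that $\|f(A)\| \lesssim \int_\R |\widehat{f_e}(t)| (1+|t|)^\beta\, dt$. Splitting $f_e = \sum_n f_e \ast \check\Fdyad_n$ via a dyadic Fourier partition as in Lemma \ref{Lem HI dense in diverse spaces} and summing the estimates $|\widehat{f_e \ast \check\Fdyad_n}(t)| \lesssim \chi_{[2^{n-2}, 2^n]}(|t|) \|f_e \ast \check\Fdyad_n\|_\infty$, one obtains $\|f(A)\| \lesssim \sum_n 2^{n \beta} 2^n \|f_e \ast \check\Fdyad_n\|_\infty \cong \|f_e\|_{\Bes^{\beta+1}_{\infty,1}} = \|f\|_{\Mih^{\beta+1}}$, which is the required bound for any $\alpha > \beta + 1$. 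The argument is valid on $\HI_0 \cap \Ma$, which is dense in $\Ma$ by Lemma \ref{Lem HI dense in diverse spaces}, so it extends by continuity.

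For the final coincidence claim, I would observe that for $f \in \bigcap_{0 < \omega < \pi} \HI(\Sigma_\omega) \cap \Ma$ the Cauchy integral \eqref{Equ Cauchy Integral Formula} and the self-adjoint functional calculus $\int_{\sigma(A)} f \, dE$ agree: one deforms the Cauchy contour and inserts the resolvent formula $(\lambda - A)^{-1} = \int (\lambda - \tau)^{-1} dE(\tau)$ from the spectral theorem, then applies Fubini, reducing to the elementary identity $\frac{1}{2\pi i} \int_\Gamma f(\lambda) (\lambda - \tau)^{-1} d\lambda = f(\tau)$ for $\tau \in \sigma(A) \subset \R_+$. Both calculi are then bounded on $\Ma$ by the statements just proved and coincide on the dense subalgebra $\bigcap_\omega \HI(\Sigma_\omega) \cap \Ma$, so they coincide everywhere on $\Ma$ (restricted to $\overline{R(A)}$ in case~(1)). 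The main obstacle throughout is the careful bookkeeping when translating the dyadic norm bounds of the cited references into the $\Ma$ norm with the correct constant $\alpha$; the conceptual substance, however, is entirely contained in Proposition \ref{Prop Elementary Mih Hor}(2) and the respective cited multiplier theorems.
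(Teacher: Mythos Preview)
Your plan and the final coincidence argument are fine, but the arguments for (1)--(3) each contain a real gap.

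For (1) and (3): decomposing $f=\sum_n f\ddyad_n$ and bounding each piece by the compact-support case of DOS/Blunck yields, after the triangle inequality,
\[
\|f(A)\|\leq \sum_{n\in\Z}\|(f\ddyad_n)(A)\|\lesssim \sum_{n\in\Z}\|(f\ddyad_n)(2^n\cdot)\|_{C^s},
\]
which is an $\Mih^\alpha_1$-type estimate, not a $\Ma$ estimate. There is no ``extra $+1$'' that converts one into the other: for $f\equiv 1$ every summand equals $\|\ddyad_0\|_{C^s}$ and the series diverges, although $1\in\Ma$ for every $\alpha$. The way the paper (implicitly, via \cite[Theorem~7.23, (7.69)]{Ouha}) proceeds is different: the H\"ormander-type theorems in \cite{Ouha,DuOS,Bluna} are stated for \emph{arbitrary} bounded Borel $f$ and already deliver
\[
\|f(A)\|_{p\to p}\lesssim \sup_{t>0}\|\eta\,f(t\cdot)\|_{C^s}\quad(\text{a supremum, not a sum}),
\]
for $s>d/2$. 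One then only needs $\sup_t\|\eta\,f(t\cdot)\|_{C^s}\lesssim\|f\|_{\Ma}$, which follows from $\Ba\hookrightarrow C^{\lfloor\alpha\rfloor}_b$ in Proposition~\ref{Prop Elementary Mih Hor}(2) together with \eqref{Equ Classical Mihlin condition}, as soon as $\lfloor\alpha\rfloor>d/2$, i.e.\ $\alpha>\lfloor d/2\rfloor+1$.

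For (2): the Mellin-inversion idea is the right one, but the pointwise bound $|\widehat{f_e\ast\check\Fdyad_n}(t)|\lesssim\|f_e\ast\check\Fdyad_n\|_\infty$ is false in general (take $f_e=\chi_{[0,N]}$: the left side at $t=0$ equals $N$). A correct route, which is what the references \cite{CDMY}, \cite[Theorem~4.73(a)]{Kr}, \cite[Theorem~6.1(a)]{KrW} do, is to exploit analyticity first: for $f\in\HI_0(\Sigma_\omega)$ one shifts the contour in $\widehat{f_e}(t)=\int f_e(s)e^{-its}\,ds$ by $\pm i\omega'$ (any $\omega'<\omega$) to get $|\widehat{f_e}(t)|\lesssim e^{-\omega'|t|}\|f\|_{\infty,\omega}$, whence
\[
\|f(A)\|\lesssim\int_\R(1+|t|)^\beta e^{-\omega'|t|}\,dt\,\|f\|_{\infty,\omega}\lesssim \omega^{-(\beta+1)}\|f\|_{\infty,\omega}.
\]
Since $A$ is assumed to have a bounded $\HI$ calculus, this extends to all $f\in\HI(\Sigma_\omega)$, and by the equivalence recalled right after Definition~\ref{Def Line calculi} this is precisely a $\Mih^{\beta+1}$ (hence $\Ma$ for $\alpha>\beta+1$) calculus.
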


\begin{proof}
To a proof of (1), we refer to \cite[Theorem 7.23, (7.69)]{Ouha} and \eqref{Equ Classical Mihlin condition},
whereas (2) can be found in \cite[Theorem 6.1 (a)]{KrW} or \cite[Theorem 4.73 (a)]{Kr}.
The fact that the extrapolated self-adjoint calculus coincides with the $\Ma$ calculus is shown in \cite[Illustration 4.87]{Kr}.
\end{proof}

\subsection{Lie Groups}\label{Subsec Examples Lie groups}

Let $G$ be a connected Lie group of polynomial volume growth with respect to its Haar measure.
That is, if $U$ is a compact neighborhood of the identity element of $G,$ then there are constants $c,d > 0$
such that $V(U^k) \leq c k^d$ for any $k \in \N.$
Let further $X_1,\ldots,X_n$ be left-invariant vector fields on $G$ satisfying H\"ormander's condition, i.e. they generate together with their successive Lie brackets, at any point of $G$ the tangent space.
We put $A = - \sum_{j = 1}^n X_j^2$ which is the associated sub-laplacian.
Then $A$ has a $\Ma$ calculus for 
\[\alpha > \max (\lfloor\frac{d}{2}\rfloor, \lfloor \frac{D}{2} \rfloor ) + 1,\]
according to \cite[Theorem]{Alex}, \eqref{Equ Classical Mihlin condition} and the last part of Lemma \ref{Lem Criteria Examples Ma calculus}.
Here, $d,D \in \N$ are given by the volume growth of balls in $G,$ $V(B(x,r)) \cong r^d \quad (r \to 0)$ and $V(B(x,R)) \cong R^D \quad (R \to \infty).$
We obtain immediately the following spectral decomposition as a corollary.

\begin{cor}\label{Cor FuMV}\cite[Theorem 4.4]{FuMV}
\begin{align}
\|x\|_p & \cong \bignorm{ \left( \sum_{n \in \Z} |\ddyad_n(A) x|^2 \right)^{\frac12} }_p
\cong \bignorm{ \left( \sum_{n = 0}^\infty |\dyad_n(A) x|^2 \right)^{\frac12} }_p \label{Equ Cor FuMV} \\
& \cong \bignorm{ \left( \int_0^\infty | \psi(tA)x |^2 \frac{dt}{t} \right)^{\frac12} }_p \label{Equ 2 Cor FuMV}
\end{align}
for any $1 < p < \infty,$ where $(\ddyad_n)_{n \in \Z}$ and $(\dyad_n)_{n\in\N_0}$ are a homogeneous and an inhomogeneous dyadic partition of unity on $\R_+$,
and $\psi$ satisfies for some $C,\epsilon > 0,\,\alpha$ as above and any $t > 0$
\begin{equation}\label{Equ Cor 52}\sup_{k= 0,\ldots,\alpha + 1} |t^k \psi^{(k)} (t)| \leq C \min (t^\epsilon,t^{-\epsilon}) \quad \text{and} \quad \int_0^\infty | \psi (t)|^2 \frac{dt}{t} < \infty.\end{equation}
\end{cor}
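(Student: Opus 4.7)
The plan is to deduce this corollary directly from the main decomposition theorems of Section~\ref{Sec 3 Spectral Decomposition}, after we have identified the right functional calculus hypothesis for the sub-laplacian~$A$. The discussion immediately preceding the corollary already gives the key input: combining \cite{Alex} with the Mihlin condition \eqref{Equ Classical Mihlin condition} and the last part of Lemma~\ref{Lem Criteria Examples Ma calculus}, $A$ admits a bounded $\Ma$ calculus on $L^p(G)$ for every $1<p<\infty$ and every $\alpha>\max(\lfloor d/2\rfloor,\lfloor D/2\rfloor)+1$. Moreover, $X=L^p(G)$ and $X'=L^{p'}(G)$ both have finite cotype ($\max(2,p)$ and $\max(2,p')$ respectively), which is the only extra Banach-space hypothesis needed in Theorem~\ref{Thm Fractional power continuous}.

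For the discrete decomposition \eqref{Equ Cor FuMV}, I would apply Theorem~\ref{Thm PL Decomposition} directly to $A$, yielding
\[ \|x\|_p\;\cong\;\E\bignorm{\textstyle\sum_{n\in\Z}\epsilon_n\,\ddyad_n(A)x}_{p}\;\cong\;\E\bignorm{\textstyle\sum_{n\in\N_0}\epsilon_n\,\dyad_n(A)x}_{p}. \]
Then I would convert the Bernoulli sums on $L^p$ into the classical $\ell^2$-square functions via the Marcinkiewicz--Zygmund--Kahane equivalence recorded in \eqref{Equ PL equivalence}, applied pointwise to $x_n(\omega)=(\ddyad_n(A)x)(\omega)$ (respectively $\dyad_n(A)x$) with a truncation--plus--monotone--convergence argument to pass from finite sums to series.

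For the continuous decomposition \eqref{Equ 2 Cor FuMV}, I would apply Theorem~\ref{Thm Fractional power continuous} with $\theta=0$, whose hypotheses \eqref{Equ Thm Fractional power continuous} and \eqref{Equ Thm Fractional power continuous 2} are precisely the conditions \eqref{Equ Cor 52} imposed on $\psi$. This produces
\[ \|x\|_p\;\cong\;\|\psi(tA)x\|_{\gamma(\R_+,\,dt/t,\,L^p(G))}. \]
To finish I would invoke the standard identification of the $\gamma$-norm of an $L^p$-valued function with a square function, recorded right before Lemma~\ref{Lem Technical gamma spaces}:
\[ \|\psi(tA)x\|_{\gamma(\R_+,\,dt/t,\,L^p(G))}\;\cong\;\bignorm{\bigl(\textstyle\int_0^\infty|\psi(tA)x|^2\,\tfrac{dt}{t}\bigr)^{\!1/2}}_{p}. \]

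The main obstacle, which is minor but worth flagging, is that $A$ need not be injective: if $G$ is compact (or otherwise admits $L^p$ null functions for $A$), then $\overline{R(A)}\neq L^p(G)$ and Theorem~\ref{Thm PL Decomposition} only applies to the injective part $A_1$ on $\overline{R(A)}$. Following Remark~\ref{Rem not injective} one splits $x=Px+(I-P)x$ along $L^p(G)=N(A)\oplus\overline{R(A)}$; on $N(A)$ every $\ddyad_n(A)$ and $\psi(tA)$ vanishes while $Px$ is controlled by $\|x\|_p$, so the stated equivalences hold after one absorbs $\|Px\|_p$ into the right-hand sides (which, for non-compact polynomial-growth groups, $A$ being injective, never actually occurs). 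All remaining steps are routine verifications of \eqref{Equ Cor 52} against the smoothness and decay required in Theorem~\ref{Thm Fractional power continuous}.
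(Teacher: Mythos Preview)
Your proposal is correct and follows essentially the same route as the paper's own proof: invoke the $\Ma$ calculus from \cite{Alex}, apply Theorem~\ref{Thm PL Decomposition} together with \eqref{Equ PL equivalence} for the discrete square functions, and apply Theorem~\ref{Thm Fractional power continuous} (with $\theta=0$) together with the $\gamma$-norm/square-function identification from Subsection~\ref{Subsec Prelims Rad} for the continuous one. Your additional remarks on finite cotype and the injectivity caveat via Remark~\ref{Rem not injective} are accurate and simply make explicit what the paper leaves implicit.
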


\begin{proof}
As we remarked above, $A$ has a $\Ma$ calculus for $\alpha > \max (\lfloor\frac{d}{2}\rfloor, \lfloor \frac{D}{2} \rfloor ) + 1.$
Then the first two equivalences follow from Theorem \ref{Thm PL Decomposition} and \eqref{Equ PL equivalence}.
The third equivalence then follows from Theorem \ref{Thm Fractional power continuous} and the square function equivalence on $L^p$ from Subsection \ref{Subsec Prelims Rad}.
\end{proof}

Under certain additional conditions we can write the expression in \eqref{Equ 2 Cor FuMV} as a convolution.
Namely, we suppose that $G$ is nilpotent and possesses representations $\pi_\lambda,\:\lambda \in \R$ along with dilations $\delta_s,\: s > 0$ satisfying the following properties.
\begin{enumerate}
\item[(G1)] There exists a Hilbert space $H$ such that $\pi_\lambda : G \to B(H)$ is a unitary representation for any $\lambda \in \R.$
\item[(G2)] The induced mappings $L^1(G) \to B(H),\, f \mapsto \hat{f}(\lambda) = \int_G f(g) \pi_\lambda(g) dg$ satisfy $\|\hat{f}(\lambda)\| \leq \|f\|_1$ for any $\lambda \in \R.$
\item[(G3)] We have $\hat{f_s}(\lambda) = \hat{f}(\delta_s \lambda)$ for any $\lambda \in \R,$ where for $s > 0,\:f_s(g) = s^{-n-1} f(\delta_{s^{-\frac12}}(g))$ with suitable $n \in \N$
($2n+2$ is the homogeneous dimension of $G$).
\item[(G4)] For $\psi \in \Ma$ and $\phi \in L^1(G)$ such that $\psi(A)f = f \ast \phi = \int_G f(u) \phi(u^{-1}(\cdot))du$ is a convolution operator,
we have $\sup_{\lambda \in \R} \|\hat{\phi}(\lambda)\| = \|\psi\|_\infty.$
\item[(G5)] If $\psi,\:\phi$ are as above with $\psi(A)f = f \ast \phi,$ then for $s > 0,$ we have $\psi(sA) f = f \ast \phi_s(A).$
\end{enumerate}
This is satisfied e.g. if $G$ is the Heisenberg group $H^n = \R^n \times \R^n \times \R,$ with group law $(x,y,t)\cdot (x',y',t') = (x+x',y+y',t+t'+\frac12 [\sum_{k=1}^n x_k'y_k - x_ky_k']).$
Then, explicitely, we have the Schr\"odinger representation on $H = L^2(\R^n)$
\[ \pi_\lambda :\: \begin{cases} H^n & \longrightarrow B(L^2(\R^n)) \\
                    h(s) & \mapsto e^{i(\lambda t + \text{sign}(\lambda)\lambda^{\frac12} x \cdot s + \frac{\lambda}{2} x \cdot y)} h(s + |\lambda|^{\frac12})
                   \end{cases}
\]
for $\lambda \in \R,$ the dilations $\delta_s(x,y,t) = (sx,sy,s^2 t),\, s > 0,$ and
$A = - \sum_{k = 1}^n X_k^2 + Y_k^2 = - \sum_{k = 1}^n \left( \frac{\partial}{\partial x_k} + \frac12 y_k \frac{\partial}{\partial t} \right)^2 +
\left( \frac{\partial}{\partial y_k} - \frac12 x_k \frac{\partial}{\partial t} \right)^2$ is the sub-laplacian operator on $H^n.$
For example if $\psi$ and $\phi$ are as in (G4), then $\hat{\phi}(\lambda) = \sum_{\alpha \in \N^n} \psi( (2|\alpha| + n) |\lambda|) P_\alpha,$ where the $P_\alpha$ are projections on $H = L^2(\R^n)$
mutually orthogonal, coming from an orthonormal basis of Hermite functions on $\R^n$ \cite{LiuMa}.

We obtain the following corollary from Theorem \ref{Thm Fractional power continuous}.

\begin{cor}\label{Cor Liu Ma}
Let $G$ be a Lie group as above satisfying (G1) -- (G5).
Then for $1 < p < \infty,$ the following equivalence of $g$-function type holds
\[ \| f \|_{L^p(G)} \cong \bignorm{ \left(\int_0^\infty | s^{-n} f \ast \phi( \delta_s (\cdot) ) |^2 \frac{ds}{s} \right)^{\frac12} }_{L^p(G)}, \]
provided that $\int_0^\infty |\psi(t)|^2 \frac{dt}{t} < \infty$ and
\begin{equation}\label{Equ Kernel Assumption}
A^l \left[\frac{d^k}{ds^k} \phi(\delta_s \cdot) \right]|_{s = 1} \in L^1(G)
\end{equation}
for $l = \pm 1$ and $k = 0,\ldots, M + 1,$ where $\psi$ and $\phi$ are given by (G1) -- (G5).
\end{cor}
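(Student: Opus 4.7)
The plan is to derive the $g$-function equivalence from Theorem \ref{Thm Fractional power continuous} applied with $\theta = 0$ on $X = L^p(G)$. As explained in the preceding paragraph of this subsection, the sub-Laplacian $A$ enjoys a bounded $\Ma$ calculus for $\alpha > \max(\lfloor d/2 \rfloor, \lfloor D/2 \rfloor) + 1$, and both $L^p(G)$ and its dual $L^{p'}(G)$ have finite cotype for $1 < p < \infty$, so the standing hypotheses of that theorem are satisfied. It remains only to check the pointwise bound \eqref{Equ Thm Fractional power continuous} and the square integrability \eqref{Equ Thm Fractional power continuous 2} on $\psi$; the latter is part of the hypothesis of the corollary.

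The condition \eqref{Equ Thm Fractional power continuous} has to be extracted from the kernel assumption \eqref{Equ Kernel Assumption}. By (G2) combined with (G4), the uniform norm of a spectral multiplier $\psi$ matches the $B(H)$-norm of the associated Fourier side object $\hat\phi$, and is in particular controlled by the $L^1$-norm of the convolution kernel $\phi$; the relations (G3) and (G5) then show that dilating the spectral variable $t$ corresponds to the spatial dilation $\delta_{t^{-1/2}}$ of $\phi$. Differentiating $\psi(tA)f = f \ast \phi_t$ in $t$ therefore translates $s$-derivatives of $\phi(\delta_s\cdot)$ into expressions of the form $t^k \psi^{(k)}(t)$, while the presence of $A^{\pm 1}$ in \eqref{Equ Kernel Assumption} produces the decay $\min(t^\epsilon, t^{-\epsilon})$ needed respectively as $t \to \infty$ and $t \to 0$. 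Once these bounds are in hand, Theorem \ref{Thm Fractional power continuous} yields
\[ \|f\|_{L^p(G)} \cong \|\psi(tA) f\|_{\gamma(\R_+, \frac{dt}{t}, L^p(G))}, \]
and the Marcinkiewicz--Zygmund equivalence \eqref{Equ PL equivalence}, valid since $L^p(G)$ has finite cotype, rewrites the $\gamma$-norm as $\|(\int_0^\infty |\psi(tA)f|^2 \frac{dt}{t})^{1/2}\|_{L^p(G)}$.

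Finally, (G5) identifies $\psi(tA)f$ with the convolution $f \ast \phi_t$ for the dilate $\phi_t$ from (G3), and an elementary change of variable relating the spectral parameter $t$ to the spatial dilation parameter $s$ via $\delta_s$ converts this integral into the stated form $\int_0^\infty |s^{-n} f \ast \phi(\delta_s \cdot)|^2 \frac{ds}{s}$ up to a harmless multiplicative constant. The main obstacle lies precisely in the middle step: transferring the kernel hypothesis \eqref{Equ Kernel Assumption} into the spectral bounds \eqref{Equ Thm Fractional power continuous}. Matching the factors $A^{\pm 1}$ and the $s$-derivatives from \eqref{Equ Kernel Assumption} to the appropriate powers $t^k$ and the small/large $t$ decay of $\psi^{(k)}$ requires careful bookkeeping of the scaling relations (G3) and (G5) together with the operator-norm identity of (G2) and (G4).
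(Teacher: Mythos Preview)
Your outline is correct and follows the same route as the paper: apply Theorem \ref{Thm Fractional power continuous} with $\theta=0$, verify \eqref{Equ Thm Fractional power continuous} for $\psi$ by converting the kernel hypothesis \eqref{Equ Kernel Assumption} via (G2)--(G5), and then pass from the $\gamma$-norm to the classical square function using \eqref{Equ PL equivalence}. The paper makes the ``middle step'' you flag more explicit by invoking Lemma \ref{Lem Mihlin calculus and Differentiation} to justify $A\psi'(A)f = f \ast \bigl[\tfrac{d}{ds}\phi(\delta_s\cdot)\bigr]\big|_{s=1}$, and then reads off $\sup_{t>0}|t^{1+l}\psi'(t)| \leq \|A^l \tfrac{d}{ds}\phi(\delta_s\cdot)\|_{L^1(G)}$ directly from (G2) and (G4), giving decay with $\epsilon=1$; the higher-order derivatives are handled the same way.
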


\begin{proof}
We show that $\psi$ satisfies the assumptions of Theorem \ref{Thm Fractional power continuous} with $\theta = 0.$
We claim that
\begin{equation}
A\psi'(A)f = f \ast [\frac{d}{ds}\phi(\delta_s (\cdot))].
\end{equation}
First note that $\frac{d}{ds} \phi(\delta_s (\cdot) )|_{s = 1} = \lim_{h \to 1} \frac{1}{h-1} \left[ \phi(\delta_h (\cdot)) - \phi \right]$ holds true with limit in $L^1(G).$
%\marginpar{\tiny Warum Limes in L1?}
Then with $\psi_1(x) = x \psi'(x),$ we have for any $f \in L^p(G)$
\[\psi_1(A)f = \lim_{h \to 1} \frac{1}{h-1} \left[ \psi(hA) - \psi(A) \right]f = \lim_{h \to 1} \frac{1}{h-1} f \ast \left[ \phi(\delta_h(\cdot)) - \phi \right] ,\]
the first equality according to Lemma \ref{Lem Mihlin calculus and Differentiation}, the second according to (G3).
By properties (G2) and (G4), then $\sup_{t > 0}|t^{1+l} \psi'(t)| \leq \| A^l \frac{d}{ds}\phi(\delta_s \cdot) \|_1 < \infty$ for $l = \pm 1.$
This shows the first condition of \eqref{Equ Cor 52} for $\alpha = 0.$
The proof for higher orders of $\alpha$ is similar.
%\marginpar{\tiny Der Beweis ist recht kurz gefasst.}
\end{proof}

\begin{rem}
If $G = H^n,$ then for example for $k = 1,$ \eqref{Equ Kernel Assumption} can be stated as $A^l [ \frac{x}{2} \partial_x \phi + \frac{y}{2} \partial_y \phi + t \partial_t \phi ] \in L^1(H^n).$
Note that in \cite{LiuMa}, even more general conditions on the kernel $A^{-Q/4} \phi \in L^2(H^n)$ and $|\nabla \phi(u) | \leq C ( 1 + |u| )^{-Q -1 - \epsilon},$
where $\nabla$ denotes the gradient on the Heisenberg group, are obtained to conclude Corollary \ref{Cor Liu Ma}.
See also \cite{MSt} for more results on spectral multipliers on the Heisenberg group.
\end{rem}

\subsection{Sublaplacians on Riemannian manifolds}

Let us discuss sublaplacians on Riemannian manifolds as an application of Sections \ref{Sec 3 Spectral Decomposition} and \ref{Sec 5 Real Interpolation}.
Let $M$ be a Riemannian manifold and $\Delta$ be the Laplace operator on $M.$
First of all, if $M$ is compact or asymptotically conical, then Theorem \ref{Thm PL Decomposition} holds \cite[p.~2]{Bouc}.
Note however that Theorem \ref{Thm PL Decomposition} may not hold on asymptotically hyperbolic manifolds where the volume of geodesic balls grows exponentially with respect to their radii \cite[p.~3]{Bouc}.

\subsection{Schr\"odinger Operators}\label{Subsec Schrodinger}

Let us discuss an application of Theorem \ref{Thm Fractional power norm}.
In \cite{OZ,Zh}, abstract Triebel-Lizorkin spaces associated with Schr\"odinger operators are considered.
For example the Schr\"odinger operator with P\"oschl-Teller potential $A = - \Delta + V_n,$ where 
$V_n(x) = -n(n+1) \text{sech}^2 x$ \cite{OZ}, or with magnetic potential
$A = - \sum_{j=1}^n (\frac{\partial}{\partial x_j} + i a_j)^2 + V,$
where $n \geq 3,$ $a_j(x) \in L^2_{\loc}(\R^n)$ is real valued, $V = V_+ - V_-$ with $V_+ \in L^1_{\loc}(\R^n)$ and $\|V_-\|_{K_n} < \gamma_n = \pi^{n/2}/\Gamma(\frac{n}{2} - 1),$
$\| \cdot \|_{K_n}$ standing for the Kato class norm \cite{Zh}.

If $A$ acts on $L^p(\R^d)$ and $1 < p < \infty,$ then by \eqref{Equ PL equivalence},
$\|x\|_{\dot{X}_\theta} \cong \|x\|_{\dot{F}^\theta_{p,2}(A)}$ and $\|x\|_{X_\theta} \cong \|x\|_{F^\theta_{p,2}(A)}$ with the norms on the right hand side defined in \cite[Section 1]{Zh},
which correspond to our norms \eqref{Equ hom norm} and \eqref{Equ inhom norm} in this case.
Thus we can apply Theorem \ref{Thm Fractional power norm} and obtain as a corollary
that the self-adjoint operators $A$ on $L^2(\R^d)$ considered in \cite[Section 2.4]{Zh},
including Schr\"odinger operators have fractional domain spaces 
which are independent of the choice of the partition of unity and form a complex interpolation scale.
Indeed, these operators have a $\Ma$ calculus as they satisfy the
usual upper Gaussian bound for the semigroup \cite[(5)]{Zh}, so Lemma \ref{Lem Criteria Examples Ma calculus} (1) applies.

Next we consider real interpolation spaces in the situation of \cite[Theorem 1.3]{Zh}.
If $A$ is a Schr\"odinger operator on $L^p(\R^d)$ considered in \cite[Section 2.4]{Zh}, then our spaces $\dB^\theta_q(A)$ and $\B^\theta_q(A)$ coincide
with the spaces $\dot{B}^{\theta,q}_{p}(A)$ and $B^{\theta,q}_{p}(A)$ from \cite[Section 1]{Zh} for $1 < p < \infty,\, 1 \leq q < \infty.$
This follows from the fact that $A$ has a $\Ma$ calculus by \cite[(5)]{Zh}.

We consider also Schr\"odinger operators $A = - \Delta + V$ on $L^p(\R^d)$ where the potential $V = V_+ - V_-$ is such that the positive part $V_+$ belongs to $L^1_{\text{loc}}(\R^d)$ and the negative part $V_-$ is in the Kato class \cite{Ouh}.
Then $A + wI$ satisfies the upper Gaussian bound for $w > 0$ large enough \cite[(5)]{Ouh}
so that $A + wI$ has a bounded $\Ma$ calculus by Lemma \ref{Lem Criteria Examples Ma calculus} (1), and the norm descriptions for the inhomogeneous spaces hold in this case.

Further Schr\"odinger operators with $\Ma$ calculus such as (small perturbations) of the harmonic oscillator $A = - \Delta + |x|^2,$ a twisted Laplace operator and scattering operators are considered in \cite[Section 7]{DuOS}.

In the recent paper \cite{Tang} there is an extension of the above to weighted $L^p$ spaces.
Suppose that $A$ is a self-adjoint operator such that the corresponding heat kernel satisfies 
\[ |p_t(x,y)| \leq C_N t^{-n/2} \left( 1 + \frac{\sqrt{t}}{\rho(x)} + \frac{\sqrt{t}}{\rho(y)} \right)^{-N} \exp(-b \frac{|x-y|^2}{t}) \]
for all $t > 0,\: N > 0$ and $x,y \in \R^n,$ where the auxiliary function $\rho$ is subject to a further condition \cite[Lemma 2.1]{Tang}.
Then $A$ has a $\Ma$ calculus on $L^p(w)$ where $1 < p < \infty$ and $w$ belongs to some Muckenhoupt weight class $A^{\rho,\infty}_p$ \cite[Theorem 4.1]{Tang}.
Note that the heat kernel condition is satisfied for the above magnetic Schr\"odinger operator as soon as the potential $V$ belongs to a reverse H\"older class $RH_{n/2}$.

\subsection{Operators on weighted $L^p$ spaces}

For several of the above examples on $L^p(U,d\mu)$ spaces the operator $A$ has also a $\Ma$ calculus on the weighted space $L^p(U,w d\mu),$ where $w$ belongs to a Muckenhoupt class.
We refer to \cite[Section 6]{DSY} for details.
This holds in particular for the Laplace operator acting on a homogeneous Lie group,
for a general non-negative self-adjoint elliptic operator on a compact Riemannian manifold, for Laplace operators on irregular domains with Dirichlet boundary conditions and for Schr\"odinger operators with standard or with electromagnetic potential.

Noet that these operators on $L^2$ spaces with $A_2$ weights are usually not self-adjoint.

\subsection{Higher order operators and Schr\"odinger operators with singular potentials on $\R^D$}

In \cite{Bluna} and \cite{Uhl}, operators $A$ that have a $\Ma$ calculus on $L^p(U)$ for $p \in (p_0,p_0')$ are considered.
These include higher order operators with bounded coefficients and Dirichlet boundary conditions on irregular domains.
They are given by a form $a : V \times V \to \C$ of the type
\[ a(u,v) = \int_\Omega \sum_{|\alpha| = |\beta| = k} a_{\alpha\beta} \partial^\alpha u \overline{\partial^\beta v} dx, \]
where $V = \dot{H}^k(\Omega)$ for some arbitrary (irregular) domain $\Omega \subset \R^D.$
One assumes that $a_{\alpha\beta} = \overline{a_{\beta \alpha}} \in L^\infty(\R^D)$ for all $\alpha,\beta$ and Garding's inequality
\[ a(u,u) \geq \delta \| \nabla^k u\|_2^2\text{ for all }u \in V,\]
for some $\delta > 0$ and $\|\nabla^k u\|_2^2 := \sum_{|\alpha| = k} \|\partial^\alpha u\|_2^2.$
Then $a$ is a closed symmetric form and the associated operator $A$ falls in our scope with $p_0 = \max(\frac{2D}{m+D},1)$ \cite{Bluna}.
A further example are Schr\"odinger operators with singular potentials on $\R^D.$
Here $A = -\Delta + V$ on $\R^D$ for $D \geq 3$ where $V = V_+ - V_-,\,V_\pm \geq 0$ are locally integrable and $V_+$ is bounded for simplicity.
We assume the following form bound:
\[ \spr{V_- u}{u} \leq \gamma(\|\nabla u\|_2^2 + \spr{V_+ u}{u}) + c(\gamma) \|u\|_2^2 \text{ for all }u \in H^1(\R^D)\]
and some $\gamma \in (0,1).$
Then the form sum $A = (- \Delta + V_+) - V_-$ is defined and the associated form is closed and symmetric with form domain $H^1(\R^D).$
By standard arguments using ellipticity and Sobolev inequality,
$A$ falls in our scope with $p_0 = \frac{2 D}{D + 2}$ \cite{Bluna}.
We also refer to Auscher's memoir \cite{A} for more on this subject.

\subsection{Laplace operator on graphs}

Consider a countable infinite set $\Gamma$ and let $\sigma(x,y)$ be a weight on $\Gamma$ satisfying $\sigma(x,y) = \sigma(y,x) \geq 0$ and $\sigma(x,x) > 0,\: x,y \in \Gamma.$
This weight induces a graph structure on $\Gamma.$
Assume that $\Gamma$ is connected, where $x$ and $y$ are neighbors if $\sigma(x,y) > 0.$
We consider the discrete measure $\mu$ defined by
$ \mu(\{x\}) = \sum_{y \text{ neighbor } x} \sigma(x,y)$ and the transition kernel $p(x,y) = \frac{\sigma(x,y)}{\mu(x)\mu(y)}.$
Then $A = I - P,$ where $P f(x) = \sum_y p(x,y) f(y) \mu(y)$ is the discrete Laplacian on the graph $\Gamma.$
According to \cite[Subsection 1.3]{KM}, $A$ has a $\Ma$ calculus on $H^p$ for $p_0<p\leq 1$ and on $L^p$ for $1 < p < \infty$ with $p_0 = \frac{D}{D + \beta}$ and $\alpha> D ( \frac1p - \frac12),$ where $D$ and $\beta$ are constants depending on the graph \cite[Section 1]{KM},
so that our results are available in this case.

\subsection{Laplace operators on fractals}

Many interesting examples of spaces and operators that fit into our context are described in the theory of Brownian Motion on fractals (see for example \cite{Kig}).
We mention only the Laplace operator $A$ on the Sierpinski Gasket,
which has a $\Ma$ calculus for $\alpha > \log 3 / (\log 5 - \log 3)$ \cite{BP}.
Thus our results from Sections \ref{Sec 3 Spectral Decomposition} and \ref{Sec 5 Real Interpolation} are available for this operator.

\subsection{Differential operators of Hermite and Laguerre type}

These differential operators are of the form $Af = \sum_n \lambda_n \spr{f}{h_n} h_n,$
where $(h_n)$ is an orthonormal basis in $L^2(U,\mu).$
In particular,
\begin{enumerate}
\item \textit{Hermite} $h_n(x) = (2^n n! \sqrt{\pi})^{-\frac12} (-1)^n \frac{d^n}{dx^n}(e^{-x^2}) e^{x^2/2},$
$A f = \sum_n (2n + d) \spr{f}{h_n} h_n$ on $L^2(\R^d,dx).$
\item \textit{Laguerre} We define first 
\[L^\alpha_k(x) = \sum_{j=0}^k \frac{\Gamma(k+\alpha + 1)}{\Gamma(k-j+1)\Gamma(j+\alpha+1)}\frac{(-x)^j}{j!}\] and \[\mathcal{L}^\alpha_k(x) = \left(\frac{\Gamma(k+1)}{\Gamma(k+\alpha+1)}\right)^{\frac12}e^{-x}x^{\alpha/2}L^\alpha_k(x)\]
and then $h_n^\alpha(x) = \mathcal{L}^\alpha_k(x^2)(2x)^{\frac12}.$
Then the Laguerre operator is defined by $A f= \sum_{k=0}^\infty (2k+1) \spr{f}{h_k^\alpha}h_k^\alpha$ on $L^2(\R_+).$
\end{enumerate}

Since the spectrum of these operators is discrete the Paley-Littlewood decomposition may take the form 
\begin{equation}\label{Equ Hermite}\|A^\theta f\| \cong \| ( \sum_n |2^{n \theta}P_n f|^2 )^{\frac12} \|_{L^p}
\end{equation}
or 
\[\|f\|_{\theta,q} \cong \left( \sum_n 2^{\theta n q} \|P_n f\|_{L^p}^q \right)^{1/q},\]
where $P_n f = \sum_{n \in I_n} \spr{f}{h_n} h_n$ with $I_n = \{ \lambda_k:\: \lambda_k \in [2^n,2^{n+1}] \}$ \cite{Dz}, provided the required multiplier theorems are true.
However these are shown for the Hermite operator in \cite[p.~91]{Than},
where also a weight $w(x) = |x|^{-n(p/2-1)}$ is allowed for the range $\frac43 < p < 4,$ and for the Laguerre operator in \cite[p.~159]{Than}.
For the Hermite operator the Paley-Littlewood decomposition \eqref{Equ Hermite} was already shown by Zheng \cite{Zha} with a different proof.
Moreover, in \cite[Theorem 1.2]{Epp}, \eqref{Equ Hermite} is proved for $\theta = 0$ for slightly more general spectral multipliers $Q(2^{-n}A)$ in place of $\dyad_n(A),$ allowing $\sum_{n=0}^\infty |Q(2^{-n}t)| \cong 1$ for $t > 0$ in place of exactly $=1.$
The $\Mih^1$ calculus on $X_\theta$ and more general Triebel-Lizorkin type spaces associated with the Hermite operator is proved in \cite[Theorem 1]{Epp1}.

\subsection{Bessel operator}

The Bessel operator on $L^p(\R_+,d\mu(x))$ where $d\mu(x) = x^r dx$ for $r > 0$ is defined by $A = - \left( \frac{d^2}{dx^2} + \frac{r}{x} \frac{d}{dx} \right).$
By \cite{GS}, this operator has a $\Ma$ calculus for $\alpha > \frac{r+1}{2},$ so that our results are available in this case.

\subsection{The Dunkl Transform}

The Dunkl transform has been introduced e.g. in \cite{Solt,Ank}.
Take the weight $w(x) = \prod_{j=1}^d |x_j|^{2k + 1}$ for some $k \geq -\frac12$ and consider the Dunkl transform isometry
$\mathcal{F}_k : L^2(\R^d,w) \to L^2(\R^d,w)$ as introduced in \cite[p.4]{Ank}.
Furthermore, for a given $k \geq - \frac12$ and $j = 1,\ldots,d$, we put
\[T_j f (x) = \frac{\partial}{\partial x_j} f(x) + \frac{2 k + 1}{x_j} \left[ \frac{f(x) - f(x_1,x_2,\ldots,x_{j-1},-x_j,x_{j+1},\ldots,x_d)}{2} \right] \]
\cite[p.3]{Ank} and then define the Dunkl Laplacian by
$\Delta_k = \sum_{j = 1}^d T_j^2 .$
One has the relation $\mathcal{F}_k (\lambda - \Delta_k)f)(x) = (\lambda + |x|^2) \mathcal{F}_k (f) (x) $ for $f \in \mathcal{S}(\R^d)$ and $\lambda > 0.$
Finally, there is a certain Dunkl convolution $\ast_k$ with the two properties
$\mathcal{F}_k(f \ast_k g) = \mathcal{F}_k(f) \mathcal{F}_k (g)$ and $\| f \ast_k g\|_{L^p(\R^d,w)} \leq \|f\|_{L^p(\R^d,w)} \|g\|_{L^1(\R^d,w)}$ \cite[p.5]{Ank}.

\begin{cor}
The Dunkl Operator $-\Delta_k$ is $0$-sectorial and has a Mihlin calculus.
Further the following description of the associated real interpolation norm holds for $\theta > 0,\: 1 \leq p,q \leq \infty$:
\[ \|f \|_{\B^\theta_q} \cong \left( \sum_{j \in \N_0} 2^{j\theta q} \| \phi_j \ast_k f \|_{L^p(\R^d,w)}^q \right)^{\frac1q} \]
(standard modification if $q = \infty$),
where $\dyad(y)= \mathcal{F}_k(\phi_j)(\sqrt{y}),\,(y > 0)$ and $(\dyad_j)_{j \in \N_0}$ is an inhomogeneous dyadic partition of unity on $\R_+.$
\end{cor}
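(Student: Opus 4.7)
The plan is to apply the general machinery of Sections~\ref{Sec 3 Spectral Decomposition} and~\ref{Sec 5 Real Interpolation} to $A = -\Delta_k$ on $X = L^p(\R^d, w)$. The argument splits into two tasks: establish a bounded Mihlin calculus for $-\Delta_k$, and identify the abstract spectral projection $\dyad_j(-\Delta_k)$ with the concrete Dunkl convolution $\phi_j \ast_k (\cdot)$.

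For the first task, the Plancherel formula for $\mathcal{F}_k$ together with the intertwining relation $\mathcal{F}_k(-\Delta_k f)(\xi) = |\xi|^2 \mathcal{F}_k(f)(\xi)$ makes $-\Delta_k$ unitarily equivalent on $L^2(\R^d, w)$ to multiplication by $|\xi|^2$; thus $-\Delta_k$ is positive self-adjoint, injective with dense range, and therefore $0$-sectorial on $L^2(\R^d, w)$. To upgrade to $1 < p < \infty$, I would invoke the Gaussian upper bounds for the Dunkl heat kernel (see e.g.\ \cite{Ank, Solt}) and the fact that $(\R^d, w\,dx, |\cdot|)$ is a space of homogeneous type of finite doubling dimension. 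Lemma~\ref{Lem Criteria Examples Ma calculus}(1) then produces the required $\Ma$ calculus on $L^p(\R^d, w)$ for $\alpha$ exceeding half the doubling dimension, and hence an $\Ma_1$ calculus. Alternatively one may cite directly the H\"ormander-type multiplier theorems available in the Dunkl literature together with \eqref{Equ Classical Mihlin condition}.

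For the second task, define $\phi_j$ through $\mathcal{F}_k(\phi_j)(\xi) = \dyad_j(|\xi|^2)$, which agrees with the corollary's notation $\dyad_j(y) = \mathcal{F}_k(\phi_j)(\sqrt{y})$. Since $\dyad_j$ is smooth and bounded with support contained in $[0, 2^{j+1}]$, the function $\xi \mapsto \dyad_j(|\xi|^2)$ is smooth and compactly supported on $\R^d$, so Dunkl-Fourier inversion yields a rapidly decaying $\phi_j$ belonging to $L^1(\R^d, w) \cap L^2(\R^d, w)$. Combining the intertwining relation with the convolution identity $\mathcal{F}_k(\phi_j \ast_k f) = \mathcal{F}_k(\phi_j) \mathcal{F}_k(f)$ yields the identity $\dyad_j(-\Delta_k) f = \phi_j \ast_k f$ first on a Dunkl-Schwartz-type dense subspace of $L^2(\R^d, w)$; a density argument using the $L^1$--$L^p$ convolution inequality and the $L^p$-boundedness of $\dyad_j(-\Delta_k)$ from step one then extends the identification to all $f \in L^p(\R^d, w)$.

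Combining the two tasks, the definition of $\|\cdot\|_{\B^\theta_q(-\Delta_k)}$ at the beginning of Section~\ref{Sec 5 Real Interpolation} is exactly the claimed convolution sum, and Theorem~\ref{Thm Besov identification} identifies $\B^\theta_q(-\Delta_k)$ with the real interpolation space of the fractional-domain scale. The main obstacle is the first task: a rigorous $\Ma$ calculus on $L^p(\R^d, w)$ rests on genuinely Dunkl-specific harmonic analysis (sharp heat kernel estimates or direct Plancherel estimates for $\mathcal{F}_k$ combined with doubling bounds on the measure of balls), and one must also track the doubling dimension to determine the valid range of $\alpha$.
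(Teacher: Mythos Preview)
Your proposal is essentially correct, and the second task (identifying $\dyad_j(-\Delta_k)f$ with $\phi_j \ast_k f$ via the Dunkl--Fourier transform) is argued exactly as in the paper. The difference lies in how the Mihlin calculus is obtained.

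The paper does not appeal to Gaussian bounds for the full Dunkl heat kernel. Instead it works componentwise: each one-dimensional operator $T_j^2$ is self-adjoint on $L^2(\R,w)$, and by Lemma~\ref{Lem Criteria Examples Ma calculus}(2) combined with the imaginary-power estimates from \cite[Corollary~1]{Solt} (together with the observation that $m$ and $m(|\cdot|^2)$ have equivalent $\Ma$ norms), $T_j^2$ has a $\Ma$ calculus for $\alpha > k+1$. The paper then invokes the multivariate spectral multiplier theorem \cite[Theorem~4.1]{Wro} to pass from the commuting family $(T_1^2,\ldots,T_d^2)$ to $-\Delta_k = -\sum_j T_j^2$, obtaining a $\Mih^{\alpha d}$ calculus. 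Your route via Lemma~\ref{Lem Criteria Examples Ma calculus}(1) and Gaussian estimates for the Dunkl heat kernel is a legitimate alternative and is in some ways more direct, but it requires importing the sharp kernel bounds and verifying the homogeneous-space hypotheses, whereas the paper's argument stays closer to one-dimensional Dunkl analysis plus an abstract tensor-product principle. Either way, once the $\Ma$ (hence $\Ma_1$) calculus is in hand, the conclusion follows from Theorem~\ref{Thm Besov identification} exactly as you indicate.
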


\begin{proof}
According to Lemma \ref{Lem Criteria Examples Ma calculus} (2) and \cite[Corollary 1]{Solt} and the fact that for any $m \in \Ma,$ $m(|\cdot|^2)$ and $m$ have equivalent $\Ma$ norms, $T_j^2$ has a $\Ma$ calculus for $\alpha > k + 1.$
The operators $T_j^2$ are self-adjoint on $L^2(\R,w).$ 
Thus, by a multivariate spectral multiplier theorem \cite[Theorem 4.1]{Wro}, the sum $-\Delta_k = -\sum_{j=1}^d T_j^2$ has a $\Mih^{\alpha d}$ calculus.
By the above properties we have that $\mathcal{F}_k(\phi_j \ast_k f) = \mathcal{F}_k(\phi_j)\mathcal{F}_k(f),$ and it is easy to check that this implies
$\phi_j \ast_k f = \dyad_j(-\Delta_k)f.$
Then the corollary follows from the real interpolation in Subsection \ref{Subsec Schrodinger}.
\end{proof}

Note that the norm of the corollary recovers the Besov-Dunkl norm in \cite{Ank} and
shows that the according spaces form a real interpolation scale.
Our theory also makes Triebel-Lizorkin type decompositions available in this situation.

\subsection{Besov spaces associated with operators satisfying Poisson type estimates}

In this subsection, we compare our Besov type spaces from Section \ref{Sec 5 Real Interpolation} with the recent Besov spaces associated with operators satisfying Poisson type estimates from \cite{BDY}.
In that article, the authors assume that $\X$ is a quasi-metric measure space such that the measure $\mu$ is subpolynomial, $\mu(B(x,r)) \leq C r^n$ for some $n > 0$ and any $r > 0,$ where $B(x,r) = \{y \in \X :\: d(y,x) < r \}$ and $d$ is the quasi-metric of $\X.$
A standing assumption is moreover that $-A$ generates a holomorphic semigroup on $L^2(\X)$ and that the integral kernel $p_t(x,y)$ of $e^{-tA}$ has the following Poisson bounds:
There exists $C > 0$ such that
\begin{equation}\label{equ Poisson estimate}
|p_t(x,y)| \leq C \frac{t}{(t + d(x,y))^{n+1}} \quad (t > 0,\: x,y \in \X).
\end{equation}
We emphasize that the authors do not assume the volume doubling property for the measure $\mu$ and no $\HI$ calculus for $A$ on $L^p(\X).$
However, their Besov spaces are based on an analytic decomposition of unity $\phi(t) = t e^{-t}$
and not a decomposition with a less regular function $\phi$ having compact support, or merely a suitable decay of $\phi(t)$ at $t \to 0$ and $t \to \infty.$
We will show that under our assumptions, our scale of Besov spaces and the scale in \cite{BDY} are the same.
The $\Ma_1$ calculus we need for $A$ on $L^p(\X),\: 1 < p <\infty$ (or on its injective part $\overline{R(A)} \subset L^p(\X)$), is known to exist in the following situations, which also fit \eqref{equ Poisson estimate}.
\begin{enumerate}
\item $A$ is a classical, strongly elliptic pseudo-differential operator of order $1$ on a compact Riemannian manifold $M = \X$ equipped with the Riemannian metric, such that $\gamma(A) = \inf\{ \Re \lambda :\: \lambda \in \sigma(A) \} > 0.$
Indeed, in this case, \cite[Theorem 3.14]{GG} gives the Poisson estimates \eqref{equ Poisson estimate}.
Moreover, \cite{SS} shows a H\"ormander functional calculus for $A$ on $L^p(\X),$ which contains the weaker $\Ma_1$ calculus for some $\alpha > 0.$
\item $A$ satisfies Gaussian estimates in the sense of \cite[(7.3)]{Ouha},
is self-adjoint on $L^2(\X)$ and $\mu$ satisfies the volume doubling property.
Indeed, the $\Ma_1$ calculus for $\alpha > d |\frac{1}{2}-\frac{1}{p}| + 1$ follows from the $L^p$ estimate of the complex time semigroup from \cite[Theorem 7.4]{Ouha} and Remark \ref{rem N_T}.
Note that as soon as $\mu$ has the volume doubling property, 2. covers all the examples (ii),(iii),(iv),(v) from \cite[p.~2456]{BDY}, according to \cite[p.~194-195]{Ouha} and also example (i) if the coefficient functions are real valued and smooth enough \cite[p.~195]{Ouha}.
\end{enumerate}

\begin{prop}
Let $(\X,\mu,d)$ and $A$ be as in the beginning of this subsection and let $A$ have a $\Ma_1$ calculus on $L^p(\X)$ for some $1 < p < \infty$ (or on $\overline{R(A)} \subset L^p(\X)$ if $A$ is not injective).
Let $-1 < \theta < 1$ and $1 \leq q < \infty.$
Then the Besov type space $\dot{B}^{\theta,A}_{p,q}$ from \cite{BDY} coincides with our Besov type space $\dB^\theta_q$ (or with $\dB^\theta_q \oplus N(A) \subset \overline{R(A)} \oplus N(A)$ if $A$ is not injective).
\end{prop}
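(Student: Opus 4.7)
The plan is to express both scales of Besov spaces through a common continuous characterization based on the analytic kernel $f(\lambda)=\lambda e^{-\lambda}$, and then invoke Theorem \ref{Thm Besov identification continuous}. The starting point is that, up to equivalent norms, the space $\dot{B}^{\theta,A}_{p,q}$ of \cite{BDY} admits the continuous description
\[ \|x\|_{\dot{B}^{\theta,A}_{p,q}} \cong \left( \int_0^\infty t^{-\theta q} \|tAe^{-tA}x\|_{L^p}^q \frac{dt}{t} \right)^{\frac1q}, \]
which is precisely the $g$-function attached to their choice $\phi(t)=te^{-t}$. (In the vectorial case $q=\infty$ or in the limit cases one uses the obvious modification.) This identity is part of the setup in \cite{BDY} under the Poisson bound \eqref{equ Poisson estimate}; the point is that their definition, though formulated via tent spaces or discrete $\ell^q$ sums, is equivalent to this continuous form when $-1<\theta<1$.

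Next I would apply Theorem \ref{Thm Besov identification continuous} together with Remark \ref{Rem Besov continuous}(3) to $f(\lambda)=\lambda e^{-\lambda}=\lambda^{a}e^{-\lambda}$ with $a=1$. Since $a>\theta$ whenever $\theta<1$, the remark shows that $f$ meets both requirements of the theorem: $\sum_{k\in\Z}\|f\,\widetilde\ddyad_0(2^{-k}\cdot)\|_{\Ma_1}2^{-k\theta}<\infty$, and $f^{-1}\ddyad_0\in\Ma_1$ since $\ddyad_0$ has compact support away from the zero of $f$. The theorem (and remark) then yield
\[ \|x\|_{\dB^\theta_q}\cong \left( \int_0^\infty t^{-\theta q} \|tAe^{-tA}x\|_{L^p}^q \frac{dt}{t} \right)^{\frac1q}, \]
which, by the preceding display, equals $\|x\|_{\dot{B}^{\theta,A}_{p,q}}$ up to equivalence. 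This is the injective case.

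For the non-injective case one invokes the canonical decomposition $L^p(\X)=\overline{R(A)}\oplus N(A)$ from Subsection \ref{Subsec A B}, valid on the reflexive space $L^p(\X)$. The injective part $A_1$ on $\overline{R(A)}$ inherits both the Poisson estimate \eqref{equ Poisson estimate} and the $\Ma_1$ calculus, since the semigroup $e^{-tA}$ leaves the splitting invariant and reduces to $e^{-tA_1}\oplus\mathrm{Id}_{N(A)}$. By Remark \ref{Rem not injective}, our space $\dB^\theta_q$ is $\dB^\theta_q(A_1)\oplus N(A)$, and one checks that $\dot{B}^{\theta,A}_{p,q}$ admits an analogous splitting in \cite{BDY}. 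The injective-case identification on $\overline{R(A)}$ then concludes the proof.

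The main obstacle is the preliminary identification in the first paragraph: one has to unwind the precise definition of $\dot{B}^{\theta,A}_{p,q}$ used in \cite{BDY} (which is formulated without an $\HI$ calculus and perhaps via tent spaces) and show that, under our stronger assumption of a bounded $\Ma_1$ calculus, it reduces to the continuous $g$-function associated with $f(\lambda)=\lambda e^{-\lambda}$. Once this reduction is in place, everything else follows mechanically from Theorem \ref{Thm Besov identification continuous} and Remark \ref{Rem Besov continuous}(3), and the admissible range $-1<\theta<1$ is exactly what is needed so that $f=\lambda e^{-\lambda}$ qualifies (the upper bound $\theta<1$ comes from $a=1$ in Remark \ref{Rem Besov continuous}(3), while the lower bound is intrinsic to the norm in \cite{BDY}).
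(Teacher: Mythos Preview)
Your reduction via Theorem \ref{Thm Besov identification continuous} and Remark \ref{Rem Besov continuous}(3) with $f(\lambda)=\lambda e^{-\lambda}$ is exactly right and is also the starting point of the paper's proof (this is precisely \eqref{equ proof BDY}). The gap in your argument, however, lies elsewhere than you suggest.

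The norm equivalence
\[
\left(\int_0^\infty t^{-\theta q}\|\ddyad_0(tA)x\|_p^q\,\frac{dt}{t}\right)^{1/q}
\cong
\left(\int_0^\infty t^{-\theta q}\|tAe^{-tA}x\|_p^q\,\frac{dt}{t}\right)^{1/q}
\]
only tells you that the two norms agree on elements $x$ that already live in both frameworks. But the two scales are defined as subspaces of \emph{different} ambient spaces: our $\dB^\theta_q$ sits in $\dot{X}_{-N}+\dot{X}_N$, whereas $\dot{B}^{\theta,A}_{p,q}$ in \cite{BDY} is defined inside the dual space $(\mathcal{M}^{-\theta,A'}_{p',q'})'$. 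Equality of norms on a common dense class does not by itself give equality of the completions; one must exhibit mutual continuous inclusions at the level of the ambient spaces.

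This is where the bulk of the paper's proof goes. For $\dB^\theta_q\hookrightarrow\dot{B}^{\theta,A}_{p,q}$ one shows that every $f\in D_A$ defines a continuous linear functional on $\mathcal{M}^{-\theta,A'}_{p',q'}$, which requires dualizing and using that $A'$ also has a $\Ma_1$ calculus on $L^{p'}$. For the reverse inclusion one must first prove that $\dot{B}^{\theta,A}_{p,q}\cap L^p(\X)$ is dense in $\dot{B}^{\theta,A}_{p,q}$: this is done by showing that $\psi_N(A)f:=\sum_{|n|\le N}\ddyad_n(A)f$ (defined by duality) lands in $L^p$, belongs to $\dot{B}^{\theta,A}_{p,q}$, and approximates $f$ there, using both the \cite{BDY} machinery (their Proposition 4.4 and Theorem 3.4) and the $\Ma_1$ calculus. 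Only then does the norm equivalence on $L^p$ elements close the loop.

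So your last paragraph misidentifies the obstacle: the continuous $g$-function description of the \cite{BDY} norm is essentially built in; what is genuinely missing from your sketch is the comparison of ambient spaces and the two density/inclusion arguments above.
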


\begin{proof}
Let $(\ddyad_n)_{n \in \Z}$ be a dyadic partition of unity.
According to Theorem \ref{Thm Besov identification continuous} and Remark \ref{Rem Besov continuous}, we have for $f \in \dB^\theta_q,$
\begin{equation}
\label{equ proof BDY}
\int_0^\infty \left( t^{-\theta} \|\ddyad_0(tA) f\|_p \right)^q \frac{dt}{t} \cong \int_0^\infty \left( t^{-\theta} \|tA e^{-tA} f\|_p \right)^q \frac{dt}{t}.
\end{equation}
Note that for $N \in \N$ and $\theta < N,$ a similar formula as \eqref{equ proof BDY} holds where $tA e^{-tA}$ is replaced by $(tA)^N e^{-tA}.$

We next show that $\dB^\theta_q$ is contained in $\dot{B}^{\theta,A}_{p,q}.$
To this end, it suffices by density to show that $D_A \subset \dot{B}^{\theta,A}_{p,q}$ and that $\|f\|_{\dot{B}^{\theta,A}_{p,q}} \lesssim \|f\|_{\dB^\theta_q}$ for any $f \in D_A.$
We show that $f \in (\mathcal{M}^{-\theta,A'}_{p',q'})',$ the latter space being defined in \cite[p.~2465-2466]{BDY}.
Let $h \in \mathcal{M}^{-\theta,A'}_{p',q'}.$
Then $\int_{\X} |h(x)|^{p'} d\mu(x) \lesssim \int_{\X} \frac{1}{(1+d(x,x_0))^{(n+\epsilon)p'}} d\mu(x) < \infty,$ so that $h$ belongs to $L^{p'}(\X).$
Note that if $A$ has a $\Ma_1$ calculus on $\overline{R(A)} \subset L^p(\X),$ then $A'$ has a $\Ma_1$ calculus on $\overline{R(A')} \subset L^{p'}(\X).$
Indeed, we have $R(\lambda,A)' = R(\lambda,A'),$ so that $\phi(A)' = \phi(A')$ for any $\phi \in \HI_0(\Sigma_\omega),\:\omega \in (0,\pi).$
Then by density of $\HI_0(\Sigma_\omega)$ in $\Ma_1$ by Lemma \ref{Lem HI0 dense in l1Ma}, we deduce that $A'$ has a $\Ma_1$ calculus and $\phi(A') \oplus 0 = (\phi(A) \oplus 0)'$ for any $\phi \in \Ma_1.$

It now follows that since $h \in L^{p'}(\X),$
\begin{align*}
|\langle f, h \rangle| & = \left| \sum_{n \in F} \langle \ddyad_n(A) f, h \rangle \right| \leq \sum_{n \in F} |\langle \ddyad_n(A) \widetilde{\ddyad_n(A)} f, h \rangle| \\
& = \sum_{n \in F} |\langle \widetilde{\ddyad_n}(A) f, \ddyad_n(A') h \rangle| \leq \sum_{n \in F} \|\widetilde{\ddyad_n}(A)f\|_p \|\ddyad_n(A')h\|_{p'},
\end{align*}
where the sum over $n \in F$ is finite, since $f$ is assumed to be in $D_A.$
Now $\ddyad_n(A')h = \widetilde{\ddyad_0}(tA')\ddyad_n(A')h$ for $t$ belonging to a small interval $I$ around $2^{-n}.$
Thus, $\ddyad_n(A')h = c_I \int_I \widetilde{\ddyad_0}(tA') \ddyad_n(A') h \frac{dt}{t},$
and therefore,
\begin{align*}
\| \ddyad_n(A')h\|_{p'} & \lesssim_n \int_I \|\ddyad_n(A')\| t^{\theta} \|\widetilde{\ddyad_0}(tA')h\|_{p'} \frac{dt}{t} \\
& \lesssim \left\{ \int_I (t^\theta \|\widetilde{\ddyad_0}(tA')h\|)^{q'} \frac{dt}{t} \right\}^{\frac{1}{q'}} \\
& \lesssim \|h\|_{\dot{B}^{-\theta,A'}_{p',q'}}.
\end{align*}
This shows $f \in (\mathcal{M}^{-\theta,A'}_{p',q'})',$ and hence, $D_A \subset \mathcal{M}^{-\theta,A'}_{p',q'}.$
We have by \eqref{equ proof BDY} that the resulting embedding $D_A \hookrightarrow \dot{B}^{\theta,A}_{p,q}$ is continuous.
It follows that $\dB^{\theta}_q \hookrightarrow \dot{B}^{\theta,A}_{p,q}.$

We now show the inclusion $\dot{B}^{\theta,A}_{p,q} \subset \dB^\theta_q.$
To this end, we show first that $\dot{B}^{\theta,A}_{p,q} \cap L^p(\X)$ is dense in $\dot{B}^{\theta,A}_{p,q}$ and second that $\dot{B}^{\theta,A}_{p,q} \cap L^p(\X) \subset \dB^\theta_q$ is a continuous injection.
Then the claimed inclusion $\dot{B}^{\theta,A}_{p,q} \subset \dB^\theta_q$ will follow immediately.
So first, let $f \in \dot{B}^{\theta,A}_{p,q}.$
For $N \in \N,$ write in short $\psi_N = \sum_{n = -N}^N \ddyad_n.$
We show that $\psi_N(A)f,$ defined via $(L^p(\X),L^{p'}(\X))$-duality $\langle \psi_N(A)f, h\rangle = \langle f, \psi_N(A')h \rangle,$ is a well-defined element of $\dot{B}^{\theta,A}_{p,q} \cap L^p(\X).$
We have for arbitrary $t > 0,$ $|\langle f, \psi_N(A') h \rangle| = |\langle tAe^{-tA} f, \frac{1}{t \lambda e^{-t\lambda}} \psi_N(\lambda)|_{\lambda = A'} h \rangle|$ and by \cite[(2.4)]{BDY}, $\|tA e^{-tA}f\|_p \lesssim \|sAe^{-sA}f\|_p$ for $\frac12 t \leq s \leq t,$ so that
$\|tA e^{-tA}f\|_p \lesssim \int_{\frac12 t}^t \|sA e^{-sA} f\|_p \frac{ds}{s} \lesssim_t \|f\|_{\dot{B}^{\theta,A}_{p,q}}.$
Thus, $|\langle f, \psi_N(A')h \rangle| \lesssim \|f\|_{\dot{B}^{\theta,A}_{p,q}} \|\frac{1}{t\lambda e^{-t\lambda}} \psi_N(\lambda)\|_{\Ma_1} \|h\|_{p'},$ and therefore, $\psi_N(A)f$ belongs to $L^p(\X).$
Now by \cite[Theorem 3.4 and (3.5)]{BDY}, $|\langle f, \psi_N(A')h \rangle| \lesssim \|f\|_{\dot{B}^{\theta,A}_{p,q}} \|\psi_N(A') h\|_{\dot{B}^{-\theta,A'}_{p',q'}} \lesssim \|f\|_{\dot{B}^{\theta,A}_{p,q}} \|h\|_{\mathcal{M}^{-\theta,A'}_{p',q'}}.$
Hence, $\psi_N(A)f \in (\mathcal{M}^{-\theta,A'}_{p',q'})'.$
Moreover,
\begin{align*}
\|tA e^{-tA} \psi_N(A) f\|_p & = \sup_{\|h\|_{p'} \leq 1} | \langle tA e^{-tA} \psi_N(A) f, h \rangle| = \sup | \langle f , \psi_N(A') tA' e^{-tA'} h \rangle| \\
& = \sup | \langle f, tA' e^{-tA'} \psi_N(A') h \rangle| = \sup | \langle tA e^{-tA} f, \psi_N(A') h \rangle| \\
& \leq \|tA e^{-tA} f\|_p \|\psi_N(A')\|_{p' \to p'} \| = \|\psi_N(A)\|_{p \to p} \|tA e^{-tA} f\|_p.
\end{align*}
It follows easily that $\|\psi_N(A) f\|_{\dot{B}^{\theta,A}_{p,q}} \leq \|\psi_N(A)\|_{p \to p} \|f\|_{\dot{B}^{\theta,A}_{p,q}}$ and hence, $\psi_N(A) f$ belongs to $\dot{B}^{\theta,A}_{p,q}.$
We next show that $\psi_N(A)f$ approximates $f$ in $\dot{B}^{\theta,A}_{p,q}.$
According to \cite[Proposition 4.4]{BDY}, we have
\begin{align*}
\|\psi_N(A)f - f \|_{\dot{B}^{\theta,A}_{p,q}}^q & \cong \int_0^\infty \left( t^{-\theta} \| t^2 A^2 e^{-tA} (\psi_N(A) - 1) f\|_p \right)^q \frac{dt}{t} \\
& = \int_0^\epsilon \ldots \frac{dt}{t} + \int_\epsilon^{\frac{1}{\epsilon}} \ldots \frac{dt}{t} + \int_{\frac{1}{\epsilon}}^\infty \ldots \frac{dt}{t},
\end{align*}
for fixed $\epsilon \in (0,1).$
The integrand is dominated by $\left(t^{-\theta} \|tA e^{-tA/2}(\psi_N(A)-1)\|_{p\to p} \|tA e^{-tA/2} f\| \right)^q.$
Note that by an elementary calculation similar to one performed previously in this proof, $\sup_{t>0,N\in\N}\|t\lambda e^{-t\lambda/2}(\psi_N(\lambda) -1)\|_{\Ma_1} <\infty,$ so that the first and the third integral above become small when $\epsilon$ is sufficiently close to $0.$
Furthermore, if $\epsilon$ is fixed, then $\|t\lambda e^{-t\lambda/2} (\psi_N(\lambda)-1)\|_{\Ma_1} \to 0$ as $N \to \infty,$ uniformly in $t \in [\epsilon,\frac{1}{\epsilon}].$
Thus, also the second integral becomes small for $N$ sufficiently large.
Resuming the above, we have shown that $\dot{B}^{\theta,A}_{p,q} \cap L^p(\X)$ is dense in $\dot{B}^{\theta,A}_{p,q}.$
At last, the inclusion $L^p(\X) \subset \dot{X}_1 + \dot{X}_{-1}$ together with \eqref{equ proof BDY} for functions in $L^p(\X)$ gives the desired injection $L^p(\X) \cap \dot{B}^{\theta,A}_{p,q} \hookrightarrow \dB^{\theta}_q.$
\end{proof}

\subsection{A non self-adjoint example: Lam\'e system of elasticity}

In this subsection, we cite an example of an operator $A$ which is $0$-sectorial on $X =L^p(\R^d;\C^{d+1})$ for any $1 < p < \infty,$ has an $\Ma$ calculus on $X,$ but is not self-adjoint on $L^2(\R^d;\C^{d+1}).$
Of course, the results on Paley-Littlewood decomposition and complex interpolation spaces from Section \ref{Sec 3 Spectral Decomposition}, and real interpolation spaces from Section \ref{Sec 5 Real Interpolation} apply in full strength for $A$ on $X.$

Consider the so-called Lam\'e operator on $\R^{d+1}$ with $d \in \N,$ which has the form
\[ Lu = \mu \Delta u + (\lambda + \mu) \nabla \,\text{div}\,u, \quad u = (u_1,\ldots,u_{d+1}), \]
where the constants $\lambda, \mu \in \R$ (typically called Lam\'e moduli) are assumed to satisfy $\mu > 0$ and $2 \mu + \lambda > 0.$
The associated Dirichlet problem
\[
\begin{cases} 
Lu & = 0 \text{ in }\R^d\times [0,\infty)\\
u|_{\R^d \times \{ 0 \}} &= f \in L^p(\R^d;\C^{d+1})
\end{cases}
\]
has the solution $u(x,t) = T_tf(x)$ given by
\begin{align*}
& (T_tf)_\beta(x) = \frac{4\mu}{3\mu + \lambda}\frac{1}{\omega_d} \int_{\R^d} \frac{t}{(|x-y|^2 + t^2)^{\frac{d+1}{2}}}f_\beta(y) dy \\
& + \frac{\mu + \lambda}{3 \mu + \lambda} \frac{2(d+1)}{\omega_d} \sum_{\gamma=1}^{d+1}\int_{\R^d} \frac{t(x-y,t)_\beta (x-y,t)_\gamma}{(|x-y|^2 + t^2)^{\frac{d+3}{2}}}f_\gamma(y) dy \quad (\beta = 1,\ldots,d+1),
\end{align*}
where $\omega_d$ is the area of the unit sphere $S^d$ in $\R^{d+1},$ and $(x-y,t)_\beta = x_\beta - y_\beta$ if $\beta \in \{ 1 , \ldots , d \}$ and $(x-y,t)_{d+1} = t$ \cite[Theorem 5.2]{MMMM}.
$T_t$ is a semigroup on $L^p(\R^d;\C^{d+1})$ for $1 < p < \infty,$ since the operator $L$ has constant coefficients.
It is strongly continuous and according to \cite[Corollary 4.2]{KrPoiss}, its negative generator $A$ has a H\"ormander calculus, which implies the $\Ma$ calculus \cite[Proposition 4.9]{Kr}, for $\alpha > \frac{d}{2} + 1.$

Further, for $p = 2,$ the semigroup operators $T_t$ are not self-adjoint, and thus, $A$ cannot be self-adjoint on $L^2(\R^d;\C^{d+1}).$
Indeed, if we write $(T_tf)_\beta (x) = \sum_{\gamma = 1}^{d+1} \int_{\R^d} k_t^{\beta,\gamma}(x-y)f_\gamma(y) dy,$ 
then for a function $f$ with $f_\gamma = 0$ for $\gamma \geq 2,$ we have for $\beta = d+1:$
$(T_tf)_{d+1}(x) = \int_{\R^d} k_t^{d+1,1}(x-y)f_1(y) dy$ and $(T_t^*f)_{d+1}(x) = \int_{\R^d} k_t^{1,d+1}(y-x)f_1(y) dy.$
However, $k_t^{d+1,1}(x-y)$ is the negative of $k_t^{1,d+1}(y-x),$ as one checks easily,
so $T_tf$ and $T_t^*f$ do not coincide in general.

\section{Bisectorial Operators}\label{Sec 6 Bisectorial Operators}

In this short section we indicate how to extend our results to bisectorial operators.
An operator $A$ with dense domain on a Banach space $X$ is called bisectorial of angle $\omega \in [0,\frac{\pi}{2})$ if it is closed, its spectrum is contained in the closure of $S_\omega = \{ z \in \C :\: |\arg(\pm z)| < \omega \},$ and one has the resolvent estimate 
\[\|(I+\lambda A)^{-1}\|_{B(X)} \leq C_{\omega'},\: \forall \: \lambda \not\in S_{\omega'},\: \omega' > \omega .\]
If $X$ is reflexive, then for such an operator we have again a decomposition $X = N(A) \oplus \overline{R(A)},$ so that we may assume that $A$ is injective.
The $\HI(S_\omega)$ calculus is defined as in \eqref{Equ Cauchy Integral Formula}, but now we integrate over the boundary of the double sector $S_\omega.$
If $A$ has a bounded $\HI(S_\omega)$-calculus, or more generally, if we have $\|Ax\| \cong \|(-A^2)^{\frac12}x\|$ for $x \in D(A) = D((-A^2)^{\frac12})$ (see e.g. \cite{DW}), then the spectral projections $P_1,\: P_2$ with respect to $\Sigma_1 = S_\omega \cap \C_+,\: \Sigma_2 = S_\omega \cap \C_-$ give a decomposition $X = X_1 \oplus X_2$ of $X$ into invariant subspaces for resolvents of $A$ such that the restriction $A_1$ of $A$ to $X_1$ and $-A_2$ of $-A$ to $X_2$ are sectorial operators with $\sigma(A_i) \subset \Sigma_i.$
For $f \in \HI_0(S_\omega)$ we have 
\begin{equation}\label{Equ Bisectorial}
f(A)x = f|_{\Sigma_1}(A_1)P_1 x + f|_{\Sigma_2}(A_2)P_2 x.
\end{equation}
We define the Mihlin class $\Ma(\R)$ on $\R$ by $f \in \Ma(\R)$ if $f \chi_{\R_+} \in \Ma$ and $f(-\cdot) \chi_{\R_+} \in \Ma.$
Let $A$ be a $0$-bisectorial operator, i.e. $A$ is $\omega$-bisectorial for all $\omega > 0.$
Then $A$ has a $\Ma(\R)$ calculus if there is a constant $C$ so that $\|f(A)\| \leq C \|f\|_{\Ma(\R)}$ for $f \in \bigcap_{0 < \omega < \pi} \HI(S_\omega) \cap \Ma(\R).$
Clearly, $A$ has a $\Ma(\R)$ calculus if and only if $A_1$ and $-A_2$ have a $\Ma$ calculus and in this case \eqref{Equ Bisectorial} holds again.
Let $\ddyad_n$ or $\dyad_n$ be the dyadic partitions of unity from Definition \ref{Def Partitions of unity} and extend them to $\R$ by $\ddyad_n(t) = \ddyad_n(|t|)$ and $\dyad_n(t) = \dyad_n(|t|)$ for $t \in \R.$

Using the projections $P_1$ and $P_2$ one verifies that
\[ \E \| \sum_n \epsilon_n \dyad_n(A) x \| \cong \E \| \sum_n \epsilon_n \dyad_n(A_1)P_1 x \| + \E \| \sum_n \epsilon_n \dyad_n(A_2)P_2 x \|. \]
If $\psi : (0,\infty) \to \C$ is as in Theorem \ref{Thm Fractional power continuous}, put $\psi(t) = \psi(|t|)$ for $t \in \R$ and obtain
\[ \| t^{-\theta} \psi(tA) x \|_{\gamma(\R,\frac{dt}{t},X)} \cong \| t^{-\theta} \psi(tA_1)P_1 x \|_{\gamma(\R_+,\frac{dt}{t},X_1)} + \| t^{-\theta} \psi(tA_2)P_2 x \|_{\gamma(\R_+,\frac{dt}{t},X_2)} .\]
Similar statements are true for $\ddyad_n$ and the Besov norms.
Now it is clear how our results from Sections \ref{Sec 3 Spectral Decomposition} and \ref{Sec 5 Real Interpolation} extend to bisectorial operators.

\section{Strip-type Operators}\label{Sec 6 Strip-type Operators}

The spectral decomposition from Theorem \ref{Thm PL Decomposition} and Proposition \ref{Prop PL extended} can be stated more naturally in the context of $0$-strip-type operators.
For $\omega > 0$ we let $\Str_\omega = \{z \in \C :\: |\Im z| < \omega\}$ the horizontal strip of height $2 \omega.$
We further define $\HI(\Str_\omega)$ to be the space of bounded holomorphic functions on $\Str_\omega,$ which is a Banach algebra equipped with the norm $\|f\|_{\infty,\omega} = \sup_{\lambda \in \Str_\omega} |f(\lambda)|.$
A densely defined operator $B$ is called $\omega$-strip-type operator if $\sigma(B) \subset \overline{\Str_\omega}$ and for all $\theta > \omega$ there is a $C_\theta > 0$ such that $\|\lambda (\lambda - B)^{-1}\| \leq C_\theta$ for all $\lambda \in \overline{\Str_\theta}^c.$
Similarly to the sectorial case, one defines $f(B)$ for $f \in \HI(\Str_\theta)$ satisfying a decay at $|\Re \lambda| \to \infty$ by a Cauchy integral formula, and says that $B$ has a bounded $\HI(\Str_\theta)$ calculus provided that $\|f(B)\| \leq C \|f\|_{\infty,\theta},$ in which case $f \mapsto f(B)$ extends to a bounded homomorphism $\HI(\Str_\theta) \to B(X).$
We refer to \cite{CDMY} and \cite[Chapter 4]{Haasa} for details.
We call $B$ $0$-strip-type if $B$ is $\omega$-strip-type for all $\omega > 0.$

There is an analogous statement to Lemma \ref{Lem Hol} which holds for a $0$-strip-type operator $B$ and $\Str_\omega$ in place of $A$ and $\Sigma_\omega,$ and $\Hol(\Str_\omega) = \{ f : \Str_\omega \to \C :\: \exists n \in \N :\: (\rho \circ \exp)^n f \in \HI(\Str_\omega) \},$
where $\rho(\lambda) = \lambda ( 1 + \lambda)^{-2}.$

In fact, $0$-strip-type operators and $0$-sectorial operators with bounded $\HI(\Str_\omega)$ and bounded $\HI(\Sigma_\omega)$ calculus are in one-one correspondence by the following lemma. 
For a proof we refer to \cite[Proposition 5.3.3., Theorem 4.3.1 and Theorem 4.2.4, Lemma 3.5.1]{Haasa}.

\begin{lem}
Let $B$ be a $0$-strip-type operator and assume that there exists a $0$-sectorial operator $A$ such that $B = \log(A)$.
This is the case if $B$ has a bounded $\HI(\Str_\omega)$ calculus for some $\omega < \pi.$
Then for any $f \in \bigcup_{0 < \omega < \pi} \Hol(\Str_\omega)$ one has
\[ f(B) = (f\circ \log)(A). \]
Note that the logarithm belongs to $\Hol(\Sigma_\omega)$ for any $\omega \in (0,\pi).$
Conversely, if $A$ is a $0$-sectorial operator that has a bounded $\HI(\Sigma_\omega)$ calculus for some $\omega \in (0,\pi),$ then $B = \log(A)$ is a $0$-strip-type operator.
\end{lem}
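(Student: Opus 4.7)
The three assertions hinge on a change of variables $\mu = e^\lambda$, which is a biholomorphism between $\Sigma_\omega$ and $\Str_\omega$ for each $\omega \in (0,\pi)$. My plan is to first establish the composition rule $f(B) = (f\circ\log)(A)$ under the assumption $B = \log(A)$, and then use it as the transfer principle to carry resolvent estimates back and forth between the sectorial and strip-type settings.

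For the composition rule, note that since $\log$ maps $\Sigma_\omega$ biholomorphically onto $\Str_\omega,$ any $f \in \HI(\Str_\omega)$ satisfies $f\circ\log \in \HI(\Sigma_\omega),$ and the regularizer transforms as $\rho^n \cdot (f\circ\log) = ((\rho \circ \exp)^n f)\circ\log,$ so $f \in \Hol(\Str_\omega)$ implies $f\circ\log \in \Hol(\Sigma_\omega).$ For $f$ decaying sufficiently at $|\Re\lambda|\to\infty$ one writes
\[ f(B) = \frac{1}{2\pi i}\int_{\partial\Str_\sigma} f(\lambda)(\lambda - B)^{-1}\,d\lambda \]
and substitutes $\mu = e^\lambda$ to obtain a Cauchy integral along $\partial\Sigma_\sigma$ whose integrand involves $(\lambda - \log(A))^{-1}.$ This last operator must be identified with $g_\lambda(A)$ for $g_\lambda(\mu) = (\lambda - \log\mu)^{-1},$ which follows from multiplicativity of the extended holomorphic calculus of $A$: $g_\lambda \cdot (\lambda - \log) \equiv 1$ in $\Hol(\Sigma_{\omega'})$ and $g_\lambda(A) \in B(X).$ The result then coincides with the Cauchy formula for $(f\circ\log)(A).$ The general case $f \in \Hol(\Str_\omega)$ follows by regularizing through $(\rho\circ\exp)^n f \in \HI(\Str_\omega),$ and using that the regularizer translates precisely to the sectorial regularizer $\rho^n$ on the $A$-side.

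Given the composition rule, the implication ``$A$ sectorial with bounded $\HI$ calculus $\Rightarrow B = \log(A)$ is $0$-strip-type'' is obtained by applying $f \mapsto f(B)$ to the resolvent kernel $f_\lambda(z) = (\lambda - z)^{-1}.$ For $\lambda \notin \overline{\Str_\theta}$ the function $g_\lambda(\mu) = (\lambda - \log\mu)^{-1}$ lies in $\HI(\Sigma_{\theta'})$ for any $\theta' < \theta$ with $\|g_\lambda\|_{\infty,\theta'} \leq (\dist(\lambda,\Str_{\theta'}))^{-1},$ hence the $\HI(\Sigma_{\theta'})$ calculus of $A$ yields $\|(\lambda - B)^{-1}\| \lesssim (|\Im\lambda|-\theta)^{-1};$ the form $\|\lambda(\lambda - B)^{-1}\| \leq C_\theta$ follows by separately estimating large-$|\Re\lambda|$ behaviour of $g_\lambda.$ The converse ``$B$ strip-type with bounded $\HI(\Str_\omega)$ calculus $\Rightarrow$ there exists sectorial $A$ with $B = \log(A)$'' is obtained symmetrically by setting $A := \exp(B)$ via the extended holomorphic calculus of $B$ (using the regularizer $\rho\circ\exp$). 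Sectoriality of $A$ follows by applying the $\HI(\Str_{\omega'})$ calculus of $B$ to $h_\mu(\lambda) = (\mu - e^\lambda)^{-1}$ for $\mu \notin \overline{\Sigma_{\omega'}},$ and denseness of the range together with injectivity are inherited from the corresponding properties on the strip side. The identity $\log(\exp(B)) = B$ then falls out of multiplicativity in the extended calculus.

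The principal obstacle I expect is the bookkeeping of regularizers in the extended calculi when composing. One has to verify that the two natural regularizations $\rho^n$ on the sectorial side and $(\rho\circ\exp)^n$ on the strip side are interchanged compatibly under $\lambda \leftrightarrow \log\mu,$ and that the inverse identities $\log\circ\exp = \mathrm{id}$ and $\exp\circ\log = \mathrm{id}$ survive through the unbounded calculus (where in principle one only has inclusions of domains, not equalities). Once this is settled carefully via the pointwise multiplicativity on $\Hol,$ both the existence of $A$ and the verification of $\log(A) = B$ in the converse direction are automatic, and the resolvent/angle bookkeeping is routine.
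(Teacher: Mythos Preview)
The paper does not give its own proof of this lemma; it simply refers the reader to Haase's monograph \cite{Haasa} (Proposition 5.3.3, Theorems 4.3.1 and 4.2.4, Lemma 3.5.1). Your sketch is essentially a reconstruction of the standard argument found there: transport the Cauchy integral defining $f(B)$ through the biholomorphism $\lambda \mapsto e^\lambda$ between $\Str_\omega$ and $\Sigma_\omega$, identify $(\lambda - \log A)^{-1}$ with $g_\lambda(A)$ via multiplicativity of the extended calculus, and read off the resolvent bounds in both directions from the composition rule. Your handling of the regularizers $(\rho\circ\exp)^n$ versus $\rho^n$ is exactly the right bookkeeping point, and is what Haase's Lemma 3.5.1 and surrounding material address.

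One caution regarding your claim that the bound $\|\lambda(\lambda - B)^{-1}\| \leq C_\theta$ ``follows by separately estimating large-$|\Re\lambda|$ behaviour of $g_\lambda$'': with the paper's stated definition of strip-type (which includes the factor $\lambda$), this does not actually follow from a single bounded $\HI(\Sigma_{\omega_0})$ calculus. For $\lambda = r + i\theta_0$ with $|\theta_0|>\theta$ and $\mu = e^r \in \Sigma_{\theta'}$, one has $|\lambda g_\lambda(\mu)| = |\lambda|/|\theta_0|$, which is unbounded as $r \to \infty$. The standard definition of strip-type in \cite{Haasa} carries no such $\lambda$-factor (one asks only $\|(\lambda - B)^{-1}\| \leq C_\theta$ on $\overline{\Str_\theta}^c$), and under that convention your argument is complete. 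The extra $\lambda$ in the paper's Section \ref{Sec 6 Strip-type Operators} appears to be a slip carried over from the sectorial definition; it is not an issue with your reasoning.
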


Let $B$ be a $0$-strip-type operator and $\alpha > 0.$
We say that $B$ has a (bounded) $\Ba$ calculus if there exists a constant $C > 0$ such that
\[ \|f(B)\| \leq C \|f\|_{\Ba} \quad (f\in \bigcap_{\omega > 0} \HI(\Str_\omega) \cap \Ba) .\]
In this case, by density of $\bigcap_{\omega > 0} \HI(\Str_\omega) \cap \Ba$ in $\Ba$,
the definition of $f(B)$ can be continuously extended to $f \in \Ba.$

Let $\equi \in C^\infty_c(\R).$
Assume that $\supp \equi \subset [-1,1]$ and $\sum_{n = -\infty}^\infty \equi(t-n) = 1$ for all $t \in \R.$
For $n \in \Z,$ we put $\equi_n = \equi(\cdot - n)$ and call $(\equi_n)_{n \in \Z}$ an equidistant partition of unity.

Assume that $B$ has a $\Ba$ calculus for some $\alpha > 0.$
Let $f \in \Baloc.$
We define the operator $f(B)$ to be the closure of
\[ \begin{cases}
D_B \subset X & \longrightarrow X \\
x & \longmapsto \sum_{n \in \Z} (\equi_n f)(B)x,
\end{cases}
\]
where $D_B = \{ x \in X :\: \exists N \in \N:\: \equi_n(B)x= 0 \quad (|n| \geq N) \}.$

Then there holds a modified version of Proposition \ref{Prop Soaloc calculus}, a proof of which can be found in \cite[Proposition 4.25]{Kr}.
Now the strip-type version of Theorem \ref{Thm PL Decomposition} reads as follows.

\begin{thm}
Let $B$ be a $0$-strip-type operator having a $\Ba$ calculus for some $\alpha > 0.$
Let further $(\equi_n)_{n \in \Z}$ be an equidistant partition of unity and put $\tequi_n = \equi_n$ for $n \geq 1$ and $\tequi_n = \sum_{k = - \infty}^0 \equi_k$ for $n = 0.$
The norm on $X$ has the equivalent descriptions:
\begin{align*}
\|x\| \cong \E \bignorm{ \sum_{n \in \Z} \epsilon_n \equi_n(B) x} \cong \sup\left\{ \bignorm{\sum_{n \in \Z} a_n \equi_n(B) x} :\: |a_n| \leq 1 \right\} \\
\intertext{and}
\|x\| \cong \E \bignorm{ \sum_{n \in \N_0} \epsilon_n \tequi_n(B) x} \cong \sup\left\{ \bignorm{\sum_{n \in \N_0} a_n \tequi_n(B) x} :\: |a_n| \leq 1 \right\}.
\end{align*}
\end{thm}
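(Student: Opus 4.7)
The plan is to mirror the proof of Theorem \ref{Thm PL Decomposition}, with the simplification that no exponential change of variables is needed: since $B$ is $0$-strip-type, its $\Ba$ calculus is described by Besov functions living directly on the spectral variable. First I would establish the strip-type analogue of Proposition \ref{Prop Convergence Bes}: if $(f_n)_n \subset \Bbii$ is bounded with $f_n \to f$ pointwise (for some $\beta > \alpha$), then $f \in \Bbii$ and $f_n(B)x \to f(B)x$ for every $x \in X$. Its proof copies that of Proposition \ref{Prop Convergence Bes} verbatim, using a dyadic partition of unity $(\Fdyad_k)_{k \in \Z}$ on $\R$ to decompose $f_n(B) = \sum_k (f_n \ast \check{\Fdyad_k})(B)$, invoking the Convergence Lemma for the $\HI(\Str_\omega)$ calculus on each summand, and controlling the tail via the Besov decay $\|f_n \ast \check{\Fdyad_k}\|_\infty \lesssim 2^{-|k|\beta}$.

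\textbf{Upper bound via the overlap lemma.} Fix $\beta > \alpha$ and let $(a_n)_n$ satisfy $|a_n| \leq 1$. The functions $g_n = a_n \equi_n$ have supports $\supp g_n \subset [n-1,n+1]$, so at most three overlap at any point of $\R$. By translation invariance of the Besov norm, $\sup_n \|g_n\|_{\Bbii} \leq \|\equi_0\|_{\Bbii}$, and Lemma \ref{Lem Overlapping} yields $\|\sum_n a_n \equi_n\|_{\Bbii} \lesssim \|\equi_0\|_{\Bbii}$, with the same uniform bound for every partial sum. By Step 1, $\sum_n a_n \equi_n(B) x$ converges in $X$ to $(\sum_n a_n \equi_n)(B) x$, and since $\Bbii \hookrightarrow \Ba$ by Proposition \ref{Prop Elementary Mih Hor}, the $\Ba$ calculus gives $\|\sum_n a_n \equi_n(B)x\| \lesssim \|x\|$. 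Specializing to $a_n = \epsilon_n(\omega)$ and taking expectations yields the upper bound, and the supremum-over-$|a_n|\leq 1$ version follows from the same estimate.

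\textbf{Lower bound by duality.} Applying the convergence lemma of Step 1 to the partial sums $f_N = \sum_{|n|\leq N} \equi_n$ (pointwise $\to 1$, uniformly $\Bbii$-bounded again by Lemma \ref{Lem Overlapping}) gives $\sum_n \equi_n(B) x = x$ unconditionally. For $x' \in X'$, introducing the thickened partition $\sigma_n = \sum_{j=-1}^1 \equi_{n+j}$ (so $\sigma_n \equi_n = \equi_n$ and $\sigma_m \equi_n = 0$ whenever $|m-n| \geq 2$) one writes
\[
|\spr{x}{x'}| = \Bigl|\E\Bigl\langle \sum_n \epsilon_n \equi_n(B)x,\; \sum_k \epsilon_k \sigma_k(B)' x'\Bigr\rangle\Bigr| \lesssim \E\Bigl\|\sum_n \epsilon_n \equi_n(B)x\Bigr\| \cdot \|x'\|,
\]
where the last inequality applies Step 2 to the adjoint calculus $f \mapsto f(B)'$ on $X'$. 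Taking the supremum over $\|x'\| \leq 1$ gives the reverse inequality, completing the homogeneous equivalence.

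\textbf{Inhomogeneous case and main obstacle.} The second equivalence, with $\tequi_n$ as defined in the theorem, follows by the same scheme: the only new ingredient is $\tequi_0 = \sum_{k\leq 0}\equi_k \in \Bbii$, again a direct instance of Lemma \ref{Lem Overlapping}. The main obstacle is the convergence lemma of Step 1; once that technical adaptation is in place the remaining steps transcribe those of Theorem \ref{Thm PL Decomposition} with only notational changes (no exponential substitution, no $\Hol$-technicalities near $0$ or $\infty$). A minor point to watch is that $B'$ need not be a strip-type operator on $X'$ when $X$ is non-reflexive; one then interprets $\sigma_k(B)'$ as a genuine Banach-space adjoint and notes that the operator norm bound used is precisely the one for $B$.
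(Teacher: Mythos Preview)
Your proposal is correct and follows precisely the route the paper intends: the theorem is stated without proof as ``the strip-type version of Theorem \ref{Thm PL Decomposition}'', and your Steps 1--4 carry out exactly that adaptation (overlap lemma on the translates $\equi_n$, convergence via the $\Bbii$-bounded pointwise convergence property, duality for the lower bound), with the correct simplification that no exponential substitution is needed. Your remark on the adjoint is well-placed: since Step~2 yields a uniform \emph{operator-norm} bound $\sup_{|a_n|\leq 1}\|\sum_n a_n\sigma_n(B)\|<\infty$, the adjoint bound on $X'$ follows immediately and no strip-type structure on $B'$ is required.
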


The strip-type version of Proposition \ref{Prop PL extended} is the following.

\begin{prop}
Let $B$ be a $0$-strip-type operator having a $\Ba$ calculus.
Further let $g \in \Baloc$ such that $g$ is invertible and $g^{-1}$ also belongs to $\Baloc.$
Assume that for some $\beta > \alpha,$
\[ \sup_{n \in \Z} \| \tequi_n g \|_{\Bes^\beta_{\infty,\infty}} \cdot \| \equi_n g^{-1} \|_{\Bes^\beta_{\infty,\infty}} < \infty. \]
Let $(c_n)_{n \in \Z}$ be a sequence in $\C \backslash \{ 0 \}$ satisfying $|c_n| \cong \| \tequi_n g \|_{\Bes^\beta_{\infty,\infty}}.$
Then for any $x \in D(g(B)),$ $\sum_{n \in \Z} c_n \equi_n(B)x$ converges unconditionally in $X$ and
\[ \|g(B) x\| \cong \E \bignorm{ \epsilon_n c_n \equi_n(B) x} \cong \sup\left\{ \bignorm{ \sum_{n \in \Z} a_n c_n \equi_n(B) x} : \: |a_n| \leq 1 \right\}. \]
\end{prop}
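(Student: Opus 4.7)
The plan is to transpose the proof of Proposition~\ref{Prop PL extended} to the strip-type setting, replacing the dyadic partition $(\ddyad_n)$ on $\R_+$ by the equidistant partition $(\equi_n)$ on $\R$, functions $f_2(t)=f(2^t)$ simply by $f$ (no logarithmic change of variable is needed here, since equidistant partitions already live on the real line), and the $\Ma$ calculus by the $\Ba$ calculus. Throughout, $\tequi_n$ is understood as the overlapping sum $\equi_{n-1}+\equi_n+\equi_{n+1}$, in analogy with \eqref{Equ tilde partition}. The Overlapping Lemma~\ref{Lem Overlapping} applies directly to $(\equi_n)$, since the supports $\supp\equi_n\subset[n-1,n+1]$ overlap at most three-fold.

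First I would establish the upper estimate. Fix $x\in D(g(B))$ and a finitely supported sequence $(a_n)$ with $|a_n|\leq 1$. The strip-type extended $\Baloc$ calculus (the analog of Proposition~\ref{Prop Soaloc calculus}(6), as formulated in \cite[Proposition 4.25]{Kr}) gives
\[
\sum_{|n|\leq N} a_n c_n\,\equi_n(B)x \;=\; \Bigl[\sum_{|n|\leq N} a_n c_n\,g^{-1}\equi_n\Bigr](B)\,g(B)x.
\]
Using $\equi_n=\tequi_n\equi_n$ and the hypothesis $|c_n|\cong \|\tequi_n g\|_{\Bbii}$, one has $|c_n|\,\|g^{-1}\equi_n\|_{\Bbii}\lesssim \|\tequi_n g\|_{\Bbii}\,\|g^{-1}\equi_n\|_{\Bbii}$, which is uniformly bounded in $n$ by assumption. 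Lemma~\ref{Lem Overlapping} then gives
\[
\Bigl\|\sum_{|n|\leq N} a_n c_n\,g^{-1}\equi_n\Bigr\|_{\Bbii} \;\lesssim\; \sup_n|a_n c_n|\,\|g^{-1}\equi_n\|_{\Bbii} \;<\;\infty,
\]
uniformly in $N$ and in the choice of $(a_n)$. Appealing to the strip-type analog of Proposition~\ref{Prop Convergence Bes} (pointwise convergence together with $\Bbii$ boundedness implies strong operator convergence in the $\Ba$ calculus of $B$), the partial sums converge in $X$ and
\[
\Bigl\|\sum_n a_n c_n\,\equi_n(B)x\Bigr\| \;\lesssim\; \|g(B)x\|.
\]
Specializing $a_n=\epsilon_n(\omega)$ shows that the series converges unconditionally and yields one direction of the claimed equivalence.

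The reverse inequality is obtained by duality exactly as in the proof of Theorem~\ref{Thm PL Decomposition}: one writes
\[
|\spr{g(B)x}{x'}| = \Bigl|\sum_n \spr{c_n\equi_n(B)x}{c_n^{-1}(\tequi_n g)(B)'x'}\Bigr|,
\]
inserts Bernoulli variables, and estimates the dual side using the $\Ba$ calculus of $B'$ on $X'$ (together with the first part of the argument applied with $g\equiv 1$, which recovers the strip-type Paley-Littlewood decomposition just established in the previous theorem). After pairing by Cauchy--Schwarz in $\E$, both factors are controlled by $\|g(B)x\|$ and $\|x'\|$ respectively.

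The main obstacle is verifying the strip-type version of Proposition~\ref{Prop Convergence Bes} used above; everything else is mechanical transcription from the sectorial case. However, this convergence property is itself routine: the iterated-difference characterization \eqref{Equ Bbii norm} of $\Bbii$ transfers verbatim to functions on $\R$, so the uniform $\Bbii$ bound passes to the limit $f$; and term-by-term convergence of the dyadic Fourier decomposition $f=\sum_k f\ast\check{\Fdyad_k}$ in the $\Ba$ calculus of $B$ follows from the Convergence Lemma for $\HI(\Str_\omega)$ (\cite[Lemma 2.1]{CDMY}, applied on strips), with the tail controlled by the same Besov-norm estimate as in the proof of Proposition~\ref{Prop Convergence Bes}. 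Alternatively, one can bypass this entirely by invoking the correspondence $B=\log A$ and quoting the sectorial version directly.
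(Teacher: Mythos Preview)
Your proposal is correct and is precisely the approach the paper has in mind: Section~\ref{Sec 6 Strip-type Operators} states this proposition without proof, presenting it as a direct transcription of Proposition~\ref{Prop PL extended} to the strip-type setting via the substitutions $A\leadsto B$, $(\ddyad_n)\leadsto(\equi_n)$, $\Ma\leadsto\Ba$, and $f_2\leadsto f$. Your identification of the strip-type analog of Proposition~\ref{Prop Convergence Bes} as the only nontrivial ingredient, together with the two routes you propose (direct proof on strips, or reduction via $B=\log A$), is exactly right.
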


For a representation of the Besov type space norm for the operator $B$ we refer to \cite[Section 3.6]{AmBG}.

\end{document}